\newcommand{\cA}{\mathcal{A}}
\newcommand{\cB}{\mathcal{B}}
\newcommand{\cC}{\mathcal{C}}
\newcommand{\cD}{\mathcal{D}}
\newcommand{\cG}{\mathcal{G}}
\newcommand{\cH}{\mathcal{H}}
\newcommand{\cL}{\mathcal{L}}
\newcommand{\cP}{\mathcal{P}}
\newcommand{\bR}{\mathbb{R}}
\newcommand{\bfF}{\mathbf{F}}
\newcommand{\PR}{\mathbb{P}}
\newcommand{\bONE}{\mathbbm{1}}
\newcommand{\dd}{ \mathrm{d}}
\DeclareMathOperator*{\argmin}{argmin}
\DeclareMathOperator{\dv}{div}
\renewcommand{\epsilon}{\varepsilon}
\newcommand{\vn}[1]{\left| \! \left| #1\right| \! \right|}
\newcommand{\ip}[2]{\langle #1,#2\rangle}
\numberwithin{equation}{section}
\newtheorem{theorem}{Theorem}[section]
\newtheorem{lemma}[theorem]{Lemma}
\newtheorem{proposition}[theorem]{Proposition}
\newtheorem{corollary}[theorem]{Corollary}
\theoremstyle{definition}
\newtheorem{definition}[theorem]{Definition}
\newtheorem{remark}[theorem]{Remark}
\newtheorem{condition}[theorem]{Condition}
\definecolor{MyGray}{rgb}{0.95,0.95,0.95}
\definecolor{shadecolor}{rgb}{0.95,0.95,0.95}
\newdimen\grayboxwidth
\newdimen\grayboxmargin
\newdimen\grayboxinset
\newenvironment{example}%
  {
  \begin{MakeFramed}{\FrameSep1cm\advance\hsize-\width \FrameRestore}%
  \refstepcounter{theorem}\noindent\textbf{Example~\thetheorem.}}%
  {\end{MakeFramed}}
\let\aux a
\begin{document}

\title{Fluctuation symmetry leads to GENERIC equations with non-quadratic dissipation}


\author{
	\renewcommand{\thefootnote}{\arabic{footnote}}
	Richard C. Kraaij\footnotemark[1],
	\renewcommand{\thefootnote}{\arabic{footnote}}
	Alexandre Lazarescu\footnotemark[2],
	\renewcommand{\thefootnote}{\arabic{footnote}}
	Christian Maes\footnotemark[3],
	\renewcommand{\thefootnote}{\arabic{footnote}}
	Mark A. Peletier\footnotemark[4]
}

\footnotetext[1]{
	Fakultät für Mathematik, Ruhr-Universität Bochum
}
\footnotetext[2]{
	Centre de Physique Théorique, École Polytechnique at Paris
}
\footnotetext[3]{
	Instituut voor Theoretische Fysica, KU Leuven
}
\footnotetext[4]{
Institute for Complex Molecular Systems, TU Eindhoven
}



\selectlanguage{english} 
\maketitle

\begin{abstract}
	
	We develop a formalism to discuss the properties of \emph{GENERIC} systems in terms of corresponding \emph{Hamiltonians} that appear in the characterization of large-deviation limits. We demonstrate how the GENERIC structure naturally arises from a certain symmetry in the Hamiltonian, which extends earlier work that has connected the large-deviation behaviour of reversible stochastic processes to the gradient-flow structure of their deterministic limit. Natural examples of application include particle systems with inertia.

\end{abstract}

%




\tableofcontents

\section{Introduction}

For a long time there has been a `folk theorem' in the probability community that states the following: if a stochastic particle system is reversible, and if the particle system converges to a deterministic limit as the number of particles tends to infinity, then this deterministic limit is a gradient flow. Such a property has wide implications: it implies the presence of a Lyapunov function, it identifies a geometry of the underlying space that is generated by the noise, it constrains the type of evolution that the limit equation can show, and it gives essential insight in both the limit equation and the connection between the limit equation and the underlying stochastic process.

For linear stochastic differential equations with Gaussian noise this `folk theorem' was shown by Onsager and Machlup~\cite{OnMa53}, and this property has also been identified for various specific examples~\cite{KOV89,KiOl90,BGL05,PeReVa14}. All these works, however,   are limited to the case of `quadratic' noise (we explain this below, cf.\ Remark \ref{remark:quadratic_gradient_flow}), and exclude for instance jump processes. The theorem was recently proven under assumptions of smoothness, but otherwise in full generality, covering both quadratic and non-quadratic noise,  by Mielke, Peletier, and Renger~\cite{MPR14}. Their result states that for any sequence of reversible stochastic processes with a deterministic limit and corresponding large-deviation principle, the large-deviation rate functional gives rise to a gradient-flow structure for the deterministic limit. 

Since that result is central to this paper we briefly describe the global argument. Pathwise large-deviation principles of sequences of stochastic processes, reversible or not, can be expressed in so-called Hamiltonians and their duals, called Lagrangians (see Section~\ref{section:mo_LDP}). If the processes are reversible, then the Hamiltonian and the Lagrangian possess a certain symmetry. This symmetry implies that the Lagrangian can be written in a form that can be recognised as a generalization of the classical gradient-flow concept. 

The aim of this paper is to do the same for a class of non-reversible processes: we connect a symmetry property of a class of non-reversible stochastic processes to a variational-evolutionary structure for a corresponding class of deterministic limiting equations. This class, known as \emph{GENERIC}, consists of evolution equations for an unknown $z\in Z$ of the type
\[
\dot z = L(z) \dd E(z) + \partial_\xi \Psi^*\bigl(z,-\tfrac12 \dd S(z)\bigr).
\]
Here $E,S:Z\to\bR$ are two functionals with derivatives $\dd S$ and $\dd E$, for each $z\in Z$ the operator $L(z)$  maps the derivative $dE(z)$ to a velocity vector, and $(z,\xi) \mapsto \Psi^*(z,\xi)$ is a non-negative functional. We discuss this structure in more detail in Section~\ref{subsec:GENERIC}. 

Gradient flows are special cases of GENERIC equations, in which the term $L(z)dE(z)$ is absent, and our point of view in this paper is that the appearance of the additional term $L(z)dE(z)$ in GENERIC can be connected to the breaking of the reversibility symmetry of the underlying stochastic processes. While we can not make this statement in full generality, we do make a number of steps in that direction, which form the novelty of this paper:
\begin{enumerate}[A.]
	\item 
	\label{innov:Ham-GENERIC-connection}
	As in the case of gradient flows, a central role is played by the Hamiltonians that arise in the large-deviation behaviour. The first step is to develop a formalism to express GENERIC systems in terms of {Hamiltonians}. 
	\item \label{innov:SP-Ham-connection}
	Using this formalism, and its interpretation in terms of large-deviation principles, we connect symmetry properties of the underlying stochastic processes with a notion we call \textit{pre-GENERIC}. 
	\item \label{innov:pre-GENERIC-to-GENERIC}
	We show that pre-GENERIC can always be extended to GENERIC by a trivial addition of an auxiliary energy variable, showing that pre-GENERIC captures the relevant structure of GENERIC.
	\item \label{innov:examples}
	We illustrate these ideas on two examples involving particles with inertia. One is the Vlasov-Fokker-Planck system; the other is known as the Andersen thermostat and combines deterministic drift with a jump process in momentum space. 
\end{enumerate}

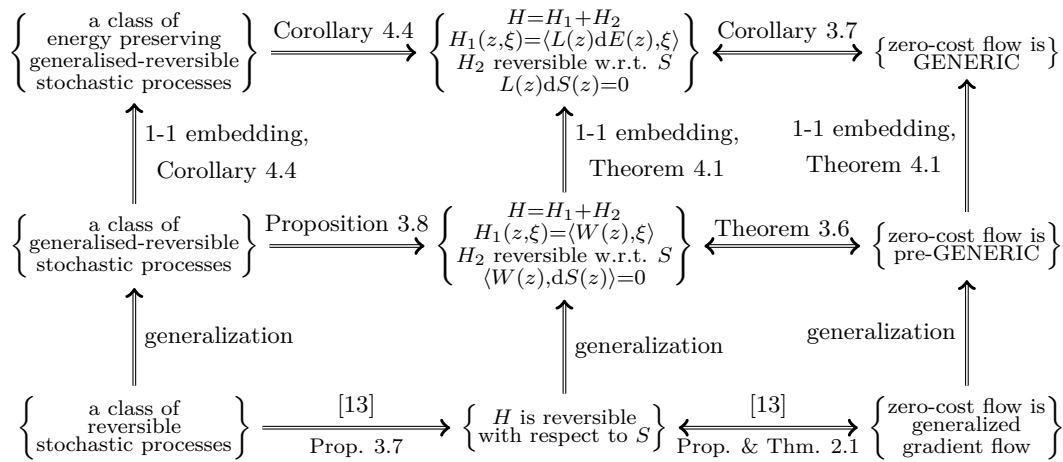
\begin{figure}[t]
	\begin{tikzpicture}
	\matrix (m) [matrix of math nodes,row sep=3em,column sep=5em,minimum width=2em]
	{
		\left\{\substack{\text{a class of} \\ \text{energy preserving} \\ \text{generalised-reversible} \\ \text{stochastic processes}} \right\} & 
		\left\{\substack{H = H_1 + H_2 \\ H_1(z,\xi) = \ip{L(z) \dd E(z)}{\xi} \\ H_2 \text{ reversible w.r.t. }S  \\ L(z) \dd S(z) = 0} \right\}& 
		\left\{ \substack{\text{zero-cost flow is} \\ \text{GENERIC}} \right\} \\	
		\left\{\substack{\text{a class of} \\ \text{generalised-reversible} \\ \text{stochastic processes}} \right\}& 
		\left\{\substack{H = H_1 + H_2 \\ H_1(z,\xi) = \ip{W(z)}{\xi} \\ H_2 \text{ reversible w.r.t.} \ S \\ \ip{W(z)}{\dd S(z)} = 0} \right\} & 
		\left\{ \substack{\text{zero-cost flow is} \\ \text{pre-GENERIC}} \right\} \\
		\left\{\substack{\text{a class of} \\ \text{reversible} \\ \text{stochastic processes}} \right\} & 
		\left\{\substack{H \text{ is reversible} \\ \text{with respect to } S} \right\}& 
		\left\{\substack{\text{zero-cost flow is} \\ \text{generalized} \\ \text{gradient flow}} \right\} \\
	};
	\draw[double,->] (m-1-1) -- (m-1-2) node [above,midway] {\footnotesize Corollary \ref{corollary:trivial_embedding_processes_pre_GENERIC_to_GENERIC}};
	\draw[double,<->] (m-1-2) -- (m-1-3) node [above,midway] {\footnotesize  Corollary \ref{corollary:GENERIC_iff_generalized_fluctuation_symmetry_orthogonality}};
	\draw[double,->] (m-2-1) -- (m-2-2) node [above,midway] {\footnotesize Proposition \ref{proposition:pre-GENERIC_from_LDP}};
	\draw[double,<->] (m-2-2) -- (m-2-3) node [above,midway] {\footnotesize  Theorem \ref{theorem:pre_GENERIC_iff_generalized_fluctuation_symmetry}};
	\draw[double,->] (m-3-1) -- (m-3-2) node [above,midway] {\footnotesize  \cite{MPR14}} node [below,midway] {\scriptsize Prop. 3.7};
	\draw[double,<->] (m-3-2) -- (m-3-3) node [above,midway] {\footnotesize  \cite{MPR14}} node [below,midway] {\scriptsize Prop. \& Thm. 2.1};

	\draw[double,<-] (m-1-1) -- (m-2-1) node [right,midway,align=center] {\footnotesize 1-1 embedding, \\ \footnotesize Corollary \ref{corollary:trivial_embedding_processes_pre_GENERIC_to_GENERIC}};
	\draw[double,<-] (m-2-1) -- (m-3-1) node [right,midway] {\footnotesize generalization};
	\draw[double,<-] (m-1-2) -- (m-2-2) node [right,midway,align=center] {\footnotesize 1-1 embedding, \\ \footnotesize Theorem \ref{theorem:pre_GENERIC_to_GENERIC}};
	\draw[double,<-] (m-2-2) -- (m-3-2) node [right,midway] {\footnotesize generalization};
	\draw[double,<-] (m-1-3) -- (m-2-3) node [left,midway,align=center]  {\footnotesize 1-1 embedding, \\ \footnotesize Theorem \ref{theorem:pre_GENERIC_to_GENERIC}};
	\draw[double,<-] (m-2-3) -- (m-3-3) node [left,midway] {\footnotesize  generalization};
	
	\end{tikzpicture}
	\caption{This diagram illustrates the global structure of this paper; see the main text for the discussion.  Many of the terms and notation are only defined in later sections. }
	\label{fig:overview}
\end{figure}

Figure~\ref{fig:overview} illustrates these steps in a graphical manner. The bottom row is the existing theory of~\cite{MPR14}, which applies to reversible stochastic processes and gradient flows; the top two rows constitute the generalization that we construct in this paper. 

Item~\ref{innov:Ham-GENERIC-connection} above corresponds to the arrows connecting the middle and right columns. Theorem~\ref{theorem:pre_GENERIC_iff_generalized_fluctuation_symmetry} and Corollary~\ref{corollary:GENERIC_iff_generalized_fluctuation_symmetry_orthogonality} reformulate GENERIC and the new concept `pre-GENERIC' in terms of Hamiltonians $H$ and their properties,  thus generalizing Theorem~2.1 of~\cite{MPR14} to certain generalised-reversible Hamiltonians. 

Item~\ref{innov:SP-Ham-connection} appears as the corresponding arrows on the left, Proposition~\ref{proposition:pre-GENERIC_from_LDP} and Corollary~\ref{corollary:trivial_embedding_processes_pre_GENERIC_to_GENERIC}, which identify classes of stochastic processes that generate Hamiltonians with the properties as given in the middle column.

Item~\ref{innov:pre-GENERIC-to-GENERIC} corresponds to the arrows that connect the middle to the top row, Theorem \ref{theorem:pre_GENERIC_to_GENERIC} and Corollary \ref{corollary:trivial_embedding_processes_pre_GENERIC_to_GENERIC}.

These results are developed in the paper in the following way. In Section~\ref{sec:varevol} we introduce gradient flows, GENERIC systems, and pre-GENERIC systems. In Section~\ref{section:microscopic_origins}, we recall the emergence of gradient-flow structures from reversible stochastic processes as described in~\cite{MPR14}, and generalize this to certain generalised-reversible systems. Section~\ref{section:pre_GENERIC_to_GENERIC} is devoted to the step from `pre-GENERIC' to `full' GENERIC and we show there how each pre-GENERIC system can be converted without loss of generality into a full GENERIC system. 
Items~\ref{innov:Ham-GENERIC-connection}, ~\ref{innov:SP-Ham-connection} and ~\ref{innov:pre-GENERIC-to-GENERIC} are treated in Sections~\ref{section:microscopic_origins} and~\ref{section:pre_GENERIC_to_GENERIC}. 

Finally, in Section~\ref{section:final_example_AT_and_VFP} we discuss two examples, the Vlasov-Fokker-Planck equation and the Andersen thermostat \cite{An80} (item~\ref{innov:examples}). The corresponding microscopic systems are similar in that they both combine inertial effects with dissipative effects, leading to pre-GENERIC and GENERIC macroscopic systems. While the Vlasov-Fokker-Planck equation has quadratic dissipation and was treated in~\cite{DPZ13},  the Andersen thermostat is a jump process and therefore has non-quadratic dissipation. We comment in detail on the consequences of this difference. 

This paper is accompanied by the paper \cite{KrLaMaPe17} which regards the topic from a physical point of view and in which we treat a greater class of examples. 

Finally, we want to mention that the results and proofs in Sections \ref{sec:varevol}, \ref{section:microscopic_origins} and \ref{section:pre_GENERIC_to_GENERIC} are carried out for smooth finite-dimensional manifolds. We believe, however, that most results carry over to a more general context. Indeed the examples that are considered in Section \ref{section:final_example_AT_and_VFP} are based in an infinite dimensional setting.
	
	In addition, in Definition \ref{def:dissipation_potentials}, we restrict ourselves to strictly convex and continuously differentiable dissipation potentials. We believe that convexity suffices.

We have made these simplifying assumptions to achieve full rigor for the results in Sections \ref{sec:varevol}, \ref{section:microscopic_origins} and \ref{section:pre_GENERIC_to_GENERIC}, while on the other hand giving a full treatment of the various structural connections mentioned above.

\bigskip

\textbf{Acknowledgement} The authors gratefully acknowledge Manh Hong Duong, who pointed us towards the fluctuation symmetry \eqref{eqn:time_symmetry_in_GENERIC_flow_case} in an early stage of the work on this paper.

R.K. was supported by the Deutsche Forschungsgemeinschaft (DFG) via RTG 2131 High-dimensional Phenomena in Probability---Fluctuations and Discontinuity. A.L. was supported by the Interuniversity Attraction Pole - Phase VII/18 (Dynamics, Geometry and Statistical Physics) at the KU Leuven and the AFR PDR 2014-2 Grant No. 9202381 at the University of Luxembourg. M.A.P. was partially supported by NWO VICI grant 639.033.008.

\section{Variational characterizations of evolution equations}
\label{sec:varevol}

The evolution equations of this paper all are members of the class of `variational evolutions',  generalizations of the classical gradient-flow, or steepest-descent, evolution in $\bR^n$. These generalizations all have in common that a central role is played by one or more \emph{driving functionals} and one or more \emph{duality maps}. Since the role of the duality map is more easily understood in a geometric language, we use a formal geometric language throughout. 

We consider a smooth manifold $Z$, with tangent space $TZ = \bigsqcup_{z \in Z} T_z Z = \bigcup_{z \in Z} \{z\} \times T_zZ$ and cotangent space $T^*Z = \bigsqcup_{z \in Z} T_z^* Z$. We will say that $V : Z \rightarrow TZ$ is a vector-field on $Z$ if $V$ is continuous and $V(z) \in \{z\} \times T_z Z$. Without loss of generality, we will only consider the second component and write $V(z) \in T_z Z$.
	
For a function $F \in C_b(Z)$, we write $C^1(Z)$ the continuously differentiable functions, i.e. if for each element $z \in Z$ there is an appropriate derivative $\dd F(z) \in T^*_z Z$ and the map $z \mapsto \dd F(z)$ is continuous. 

\smallskip

A continuous-time evolution equation is a differential equation that describes the evolution of a quantity $z$ in a state space (or manifold) $Z$, which at its most abstract level can be written as 
\begin{equation} \label{eqn:flow}
\dot{z} = \mathbf{F}(z),
\end{equation}
where $\mathbf{F}$ is a vector field on $Z$. Every evolution equation is of the form~\eqref{eqn:flow}; \emph{variational} evolution equations are a subclass described by properties of $\mathbf F$. Two types are central to this paper: gradient flows and GENERIC systems. 

\subsection{Gradient flows}

Let $\Psi : TZ \rightarrow [0,\infty]$ and let $\Psi^* : T^*Z \rightarrow [0,\infty]$ be Legendre duals:
\begin{equation*}
\Psi^*(z,\xi)  = \sup_{v \in T_z Z} \left\{\ip{v}{\xi} - \Psi(z,v) \right\}, \qquad \Psi(z,v)  = \sup_{\xi \in T^*_z Z} \left\{\ip{v}{\xi} - \Psi^*(z,\xi) \right\}.
\end{equation*}

\begin{definition} \label{def:dissipation_potentials}
	We say that a Legendre pair $(\Psi,\Psi^*)$ are \emph{dissipation potentials} if $\Psi,\Psi^*$ are strictly convex, continuously differentiable and
	\begin{equation} \label{eqn:potentials_zero_at_zero}
	\Psi(z,0) = \Psi^*(z,0) = 0.
	\end{equation}
	We say that $\Psi$ and $\Psi^*$ are symmetric if $\Psi(z,v) = \Psi(z,-v)$ for all $(z,v) \in TZ$ and if $\Psi^*(z,\xi) = \Psi^*(z,-\xi)$ for all $(z,\xi) \in T^*Z$.
\end{definition}
Dissipation potentials satisfy $0 = \min_{v\in T_zZ} \Psi(z,v)  = \min_{\xi\in T_z^*Z} \Psi^*(z,\xi)$ (see~\cite[Sec.~1.2]{MPR14}).

\medskip
Let $S \in C^1(S)$ be some functional. By Legendre duality, we find for all $(z,v) \in TZ$ that
\begin{equation*}
\Psi(z,v) + \Psi^*\left(z, - \tfrac{1}{2}\dd S(z)\right) + \frac{1}{2} \ip{v}{\dd S(z)} \geq 0.
\end{equation*}
A gradient flow with respect to dissipation potentials $(\Psi,\Psi^*)$ is a flow for which this inequality is saturated.

\begin{definition}
	A \emph{gradient flow} with respect to a functional $S \in C^1(S)$ and dissipation potentials $(\Psi,\Psi^*)$ is a flow $\dot{z} = \mathbf{F}(z)$ such that
	\begin{equation*}
	\Psi(z,\mathbf{F}(z)) + \Psi^*\left(z, - \tfrac{1}{2}\dd S(z)\right) + \frac{1}{2} \ip{\mathbf{F}(z)}{\dd S(z)} = 0
	\end{equation*}
	for all $z \in Z$.
\end{definition}

Clearly, the functional $S$ is a Lyapunov function for the gradient flow as
\begin{equation*}
\frac{\dd}{\dd t} S(z(t)) = \ip{\mathbf{F}(z)}{\dd S(z)} = - 2 \left(\Psi(z,\mathbf{F}(z)) + \Psi^*\left(z, - \tfrac{1}{2}\dd S(z)\right)\right) \leq 0
\end{equation*}
by the non-negativity of $(\Psi,\Psi^*)$.

By the properties of Legendre duality, we find the equivalent characterizations
\begin{subequations}
	\begin{align}
	\dot z = \mathbf F(z) \text{ is an $(S,\Psi,\Psi^*)$-gradient flow}\quad
	&\Longleftrightarrow&  \dot z &= \partial_\xi \Psi^*\left(z,- \tfrac{1}{2} \dd S(z) \right)
	\label{eq:equiv-def-GF}\\
	&\Longleftrightarrow& -\frac12 \dd S(z) &= \partial_v \Psi(z,\dot z).
	\end{align}
\end{subequations}
Note that $\partial_v\Psi(z,\cdot)$ and $\partial_\xi\Psi^*(z,\cdot)$ are each other's inverse (ie., $\partial_v\Psi(z,\partial_\xi\Psi^*(z,\xi)) = \xi$ and vice versa). Also note that they are \emph{duality maps:} they map the tangent plane $T_zZ$ at~$z$ to the cotangent plane $T_z^*Z$ at $z$, in the case of $\partial_v\Psi(z,\cdot)$, and vice versa in the case of $\partial_\xi\Psi^*(z,\cdot)$. These duality maps are a central ingredient of variational evolution, both from a mathematical and from a modelling point of view. \emph{Mathematically}, they are necessary to convert the cotangent vector $\frac12 \dd S(z)$ into a tangent vector $\mathbf F(z)$ that can be equated to $\dot z$; since $\dd S(z)$ and $\dot z$ live in different spaces, the gradient-flow concept is not well defined without such a duality map. From a \emph{modelling} point of view, on the other hand, the duality maps are \emph{force-to-velocity} conversions: if $S$ is interpreted as an energy, then the derivative $\dd S$ is a generalized force; the gradient-flow equation~\eqref{eq:equiv-def-GF} describes how this force leads to the movement described by $\dot z$, through the force-to-velocity, cotangent-to-tangent map $\partial_\xi\Psi^*(z,\cdot)$

\begin{remark} \label{remark:quadratic_gradient_flow}
	An important subclass of gradient flows are those with dissipation potentials $(\Psi,\Psi^*)$ that are quadratic functions of their second arguments, i.e.\ both $v\mapsto \Psi(z,v)$ and $\xi\mapsto \Psi^*(z,\xi)$ are quadratic functions of $v$ and $\xi$ respectively. We refer to these as \emph{quadratic} gradient flows. In that case, the operators $M(z) : T^*_zZ \rightarrow T_z Z$ defined by $\xi \mapsto M(z)(\xi) := \partial_\xi \Psi^*(z,\xi )$ are \emph{linear}. 
	In addition, they satisfy the symmetry property $\ip{M(z)(\xi)}{\eta} = \ip{M(z)(\eta)}{\xi}$ for all $\eta,\xi \in T_z^* Z$; if $M$ is a matrix, this corresponds to the symmetry of $M$. Onsager termed this symmetry property the `reciprocal relations' in his seminal papers in 1931~\cite{On31}. 
\end{remark}

\begin{example}
To facilitate the exposition later on, we will be using a `running example' of a very simple, quadratic-dissipation system. To start this example, we let $Z=\bR^n$, $S:Z \to \bR$ some smooth function, and $\Psi^*(z,\xi) := \frac12 \|\xi\|_M ^2 = \frac12\xi^TM\xi$, where $M\in \bR^{n\times n}$ is a fixed, symmetric, positive semi-definite matrix.

Given these choices, the gradient-flow equation generated by $Z$, $S$, and $\Psi^*$ is the classical gradient flow equation
\begin{equation}
\label{eq:ex-GF}
\dot z = -\frac12 M \dd S(z).
\end{equation}
%
\end{example}

\subsection{GENERIC systems}
\label{subsec:GENERIC}
In addition to the well-studied gradient-flow systems, a second class of well studied systems are systems of GENERIC type (Generalized Equation for Non-Equilibrium Reversible-Irreversible Coupling~\cite{GrOt97,OtGr97,Ot05}). Such systems combine in an orthogonal way a gradient flow and a Hamiltonian flow. `Classical' GENERIC is formulated in terms of the symmetric operator $M$~\cite{GrOt97}; here we consider  a generalized version of GENERIC, where the symmetric matrix~$M$ has been replaced by the gradient of the convex functional $\Psi^*$~\cite{GrOt97,Mi11,Grmela12}.

Formally, our definition of a GENERIC equation implies an evolution equation of the form
\begin{equation}
\label{eq:GENERIC-eqn}
\dot z = L(z) \dd E(z) + \partial_\xi \Psi^*\left(z, - \tfrac{1}{2} \dd S(z)\right).
\end{equation}
The definition that we now choose is intended to allow for non-differential dissipation potentials.

\begin{definition} \label{definition:GENERIC_original_generalized}
	A GENERIC equation with respect to two functionals $S,E \in C^1(Z)$, dissipation potentials $(\Psi,\Psi^*)$ and duality maps $L$ is given by  
	\begin{equation} \label{eqn:generalizedGENERIC}
	\Psi(z,\dot{z} - L(z) \dd E(z)) + \Psi^*\left(z, - \tfrac{1}{2} \dd S(z)\right) + \frac{1}{2}\ip{\dot{z}}{\dd S(z)} = 0
	\end{equation}
	where 
	\begin{enumerate}[(a)]
		\item $(\Psi,\Psi^*)$ are symmetric dissipation potentials;
		\item  \label{item:def_GENERIC_L_antisymmetric_Jacobi} $L = \{L(z)\}_{z \in Z}$ is a family of operators $L(z) : T_z^* Z \rightarrow T_z Z$ that are `anti-symmetric': for every $z \in Z$ and $\xi,\eta  \in T_z^* Z$ we have $\ip{L(z) \xi}{\eta} = - \ip{L(z) \eta}{\xi}$, and the bracket $\{F,G \} := \ip{\dd F}{L \dd G}$, (applied to `smooth' functions $F_i : Z \rightarrow \bR$) satisfies the Jacobi identity:
		\begin{equation*}
		\{\{F_1,F_2\},F_3\} + \{\{F_3,F_1\},F_2\}  + \{\{F_2,F_3\},F_1\}  = 0.
		\end{equation*}
		\item \label{item:def_GENERIC_orthogonality} $E,S,L,\Psi^*$ satisfy the degeneracy conditions
		\begin{equation} \label{eqn:extended_orthogonality}
		L(z) \dd S(z) = 0, \qquad \Psi^*(z,\xi+\alpha \dd E(z)) = \Psi^*(z,\xi)\ \forall \alpha.
		\end{equation}
	\end{enumerate}
\end{definition}

The conditions in Definition~\ref{definition:GENERIC_original_generalized} are  generalizations of the original conditions on GENERIC as formulated by Grmela and \"Ottinger~\cite{GrmelaOttinger97,OttingerGrmela97,Oettinger05}. In brief, the operator $L$ is interpreted as defining a Poisson bracket $\{\cdot,\cdot\}$, such that the equation $\dot z = L(z) \dd E(z)$ defines a Hamiltonian system with Hamiltonian $E$. The joint equation~\eqref{eq:GENERIC-eqn} then combines this Hamiltonian dynamics with a gradient-flow dynamics determined by $S$ and $\Psi^*$. 

The degeneracy conditions impose constraints on how these two dynamics combine: they characterize a property of `non-interaction'~\cite{Mi11} between the two types of dynamics. This can be recognized for instance in the evolution of the functionals $E$ and $S$. These functionals 
are routinely interpreted as an energy and an entropy, and the properties of a GENERIC equation underwrite this: by the degeneracy conditions, we find that along the flow
\begin{align*}
\frac{\dd E}{\dd t} 
&= \ip{\dd E }{ L \dd E} +\left\langle{\dd E},{\partial_\xi \Psi^*\left(z, - \tfrac{1}{2} \dd S(z)\right)} \right\rangle\\
&= \ip{\dd E}{L \dd E} +\partial_\alpha \Psi^*\left(z, - \tfrac{1}{2} \dd S(z) + \alpha \dd E(z)\right) = 0, 
\end{align*}
whereas by the non-negativity of $(\Psi,\Psi^*)$ and the GENERIC equation
\begin{equation} \label{eqn:GENERIC_S_Lyapunov}
\frac{\dd S}{\dd t} = \ip{\dot{z}}{\dd S(z)} = - 2\left(\Psi\bigl(z,\dot{z} - L(z) \dd E(z)\bigr) + \Psi^*\left(z, - \tfrac{1}{2} \dd S(z)\right)\right) \leq 0.
\end{equation}
Therefore $E$ is conserved and $S$ decreases along a solution of~\eqref{eqn:generalizedGENERIC}. (We should emphasize  that the direction of change of $S$ is the opposite of the usual GENERIC convention: $S$ \emph{decreases} along the flow.)

Thus, a GENERIC equation characterizes a relaxation to equilibrium in terms of a dynamics that consists of two parts; one is dissipative, and is the gradient flow of an `entropy'~$S$, whereas the other is Hamiltonian, and conserves the `energy' $E$. 

\begin{example}
To continue the example, the gradient-flow equation~\eqref{eq:ex-GF} can be converted into a simple GENERIC system by adding a  conservative term
\begin{equation}
\label{eq:ex:GENERIC}
\dot z = L \dd E(z) -\frac12 M \dd S(z),
\end{equation}
where we introduced a constant antisymmetric matrix $L\in \bR^{n\times n}$ and an energy function $E \in C^1(Z)$. The second degeneracy condition in~\eqref{eqn:extended_orthogonality} reduces to $M\dd E=0$, as we see from an explicit calculation: for all $\xi\in \bR^n$, 
\[
0 = \partial_\alpha \Psi^*\bigl(z,\xi+\alpha \dd E(z)\bigr)\Big|_{\alpha=0} = \partial_\alpha \tfrac12 \|\xi+\alpha \dd E(z)\|_M^2\Big|_{\alpha=0}   = \xi^T{M\dd E(z)}.
\]
Therefore we find the two degeneracy conditions $L\dd S=0$ and $M\dd E=0$.
\end{example}

\begin{example}
We can make our `running example' even more explicit. 
A natural example of equation~\eqref{eq:ex:GENERIC} is a Hamiltonian system with friction; here $L\dd E$ represents the Hamiltonian dynamics, and $-\frac12 M\dd S$ the dissipative dynamics associated with friction. This example however can not be done with a constant matrix $M$, and therefore we will consider a position-dependent matrix $M(z)$. See~\cite[Sec.~3.1]{Mi11} for a more extensive discussion of this example.

Let $q\in \bR^d$ and $p\in \bR^d$ be canonical variables of the Hamiltonian system, with Hamiltonian $H(q,p) = \frac {p^2}{2m} + V(q) $. Upon adding friction to the system, $H$ is not conserved; the thermodynamic interpretation is that mechanical energy (represented by $H$) is converted into heat. Therefore we add an `internal energy' variable $e\in\bR$ that captures this heat; we discuss this addition in more detail in Section~\ref{section:pre_GENERIC_to_GENERIC}. The state space is then $\bR^{d+d+1}$, with variables $z=(q,p,e)$, and the total energy is $E(q,p,e) = H(q,p) + e = \frac {p^2}{2m} + V(q) +e$.

The Poisson operator $L$  simply describes the symplectic structure of the Hamiltonian system, 
\[
L = \begin{pmatrix} 0 & I &0\\-I& 0 & 0 \\ 0& 0& 0 \end{pmatrix} \in \bR^{(d+d+1)\times (d+d+1)}.
\]
The dissipative structure is given by 
\[
S(q,p,e) := -e, \qquad
M(q,p,e) := 2\gamma \begin{pmatrix}
0 & 0 & 0 \\
0 & 1 &- p/m\\
0 & - p/m & {p^2}/m^2
\end{pmatrix},
\]
where $\gamma>0$ is a friction parameter. With these choices, the evolution equation~\eqref{eq:ex:GENERIC} becomes
\begin{alignat*}3
\dot q &= L_{qp}\partial_p E &&& &=\frac pm, \\
\dot p &= L_{pq}\partial_q E  &&- \tfrac12 M_{pe} \partial_e S &&= -\partial_q V(q) - \gamma \frac pm , \\
\dot e &= &&-\tfrac12 M_{ee} \partial_e S &&= \gamma \frac{p^2}{m^2}.
\end{alignat*}
Note that the degeneracy conditions $L\dd S = 0$ and $M\dd E = 0$ are satisfied by construction. 
\end{example}

\begin{remark}
	The second statement in \eqref{eqn:extended_orthogonality} is equivalent to the notion that $\dd E$ is a \textit{distinguished} function for the bracket generated by $\Psi^*$ introduced on page 6624 of \cite{GrOt97}. 
\end{remark}

\subsection{Pre-GENERIC}

We will see below that a less constrained version of GENERIC, which we will call pre-GENERIC, emerges naturally for a broad class of evolution equations from their microscopic origins. For such equations, we do not consider a flow coming from some conserved energy~$E$, but replace $L(z) \dd E(z)$ by a general flow $W(z)$. The constraint in \eqref{item:def_GENERIC_orthogonality} of Definition \ref{definition:GENERIC_original_generalized} can then be replaced by the less constraining orthogonality condition $\ip{W(z)}{\dd S(z)} = 0$.

\begin{definition} \label{definition:pre_GENERIC}
	A pre-GENERIC equation with respect to a functional $S \in C^1(Z)$, dissipation potentials $(\Psi,\Psi^*)$ and vector field $W : Z \rightarrow TZ$ is given by  
	\begin{equation} \label{eqn:preGENERIC}
	\Psi(z,\dot{z} - W(z)) + \Psi^*\left(z, - \tfrac{1}{2} \dd S(z)\right) + \frac{1}{2}\ip{\dot{z}}{\dd S(z)} = 0
	\end{equation}
	where 
	\begin{enumerate}[(a)]
		\item $(\Psi,\Psi^*)$ are are symmetric;
		\item $S$ and $W$ satisfy the non-degeneracy condition $\ip{W(z)}{\dd S(z)} = 0$ for all $z \in Z$.
	\end{enumerate}
\end{definition}

The computation in \eqref{eqn:GENERIC_S_Lyapunov} does not change under the generalization from GENERIC to pre-GENERIC. Thus, we conclude that also in the pre-GENERIC setting the functional $S$ is decreasing under the flow.

\begin{example}
\label{ex:pre-G}
The gradient-flow equation~\eqref{eq:ex-GF} can also be converted into a pre-GENERIC system by adding a flow field $W$, 
\[
\dot z = W(z) -\frac12 M \dd S(z),
\]
which should satisfy the more limited degeneracy condition $\ip{W}{\dd S}=0$.
\end{example}

A pre-GENERIC system is GENERIC if it satisfies the following additional conditions:
\begin{enumerate}
\item $W(z) = L(z)\dd E(z)$ for some energy functional $E \in C^1(Z)$ and some family of antisymmetric operators $L$ that satisfy the Jacobi identity;
\item $L(z)\dd S(z) = 0$.
\end{enumerate}

\section{Microscopic origins of variational evolution and fluctuation symmetries} \label{section:microscopic_origins}

The discussion of the papers \cite{DPZ13,MPR14} shows that the gradient flow and GENERIC structures can be derived from the microscopic origins of the macroscopic evolution equation. We will follow a similar route, and show that the construction of dissipation potentials $(\Psi,\Psi^*)$ from the Hamiltonian or the Lagrangian of the path-space large deviations as in \cite{MPR14} for the gradient flow setting, can be extended to the setting of (generalized) GENERIC. We will recover the result of \cite{DPZ13} as a special case.

We start by introducing the path-space large deviations.

\subsection{Microscopic origins of macroscopic equations and large deviations of processes} \label{section:mo_LDP}

Consider a sequence of Markov processes $X_n$ with infinitesimal generators $(A_n, \cD(A_n))$. The macroscopic limiting equation of the processes $X_n$ can be derived via the convergence of generators. In particular, if the limiting system is deterministic, we can usually find a set of functions $\cD(A) \subseteq C_b(Z)$ and functions $f_n \in \cD(A_n)$ that converge boundedly and uniformly on compacts(buc) to $f$ and such that $A_n f_n$ converges buc to $Af$. In addition, this limiting operator is of the form $Af (z) = \ip{\mathbf{F}(z)}{\dd f(z)}$ where $\mathbf F$ is a vector field on $Z$.

Path-space large deviations give us the fluctuations around $\dot{z} = \mathbf{F}(z)$ of the type
\begin{equation} \label{eqn:LDP}
\PR[\{X_n(t)\}_{t \geq 0} \approx \{\gamma(t)\}_{t \geq 0}] \sim \exp \left\{-r_n\left( I_0(\gamma(0)) + \int_0^\infty \cL(\gamma(s),\dot{\gamma}(s)) \dd s \right) \right\}.
\end{equation}
Here $r_n$ denotes the rate of exponential decay, which could have the interpretation simply $r_n= n$, but could also be in terms of a volume depending on $n$, e.g. $r_n = n^3$. Feng and Kurtz describe an algorithm~\cite{FK06} to find the Lagrangian $\cL : TZ \rightarrow [0,\infty]$  as follows. One defines pre-limit Hamiltonians $H_n f = r_n^{-1} e^{r_nf} A_n e^{r_nf}$ with domain $\cD(H_n) = \{f \, | \, e^{r_nf} \in \cD(A_n)\}$ and shows that there is a set of functions $\cD(H)$ such that for $f \in \cD(H)$ one can find $f_n \in \cD(H_n)$ that converge buc to $f$ and such that $H_nf$ converge buc to  $Hf$. As above, we usually find that $H$ is of the form $Hf(z) = \cH(z,\dd f(z))$, where $\cH$ is a map from $T^*Z$ to $\bR$. The Lagrangian $\cL$ is then found by taking the Legendre transform of $\cH$.

The associated macroscopic equation is given by the \textit{zero-cost} flow: $\dot{z} = \bfF(z)$, where $\bfF(z) = v$ if and only if $\cL(z,v) = 0$.

\smallskip

In the setting that the processes $X_n$ are reversible with respect to measures $\pi_n$, the Lagrangian respects this reversibility. Let $S$ be the rate function of, if it is satisfied, the large deviation principle with speed $r_n$. In that case, we obtain from \eqref{eqn:LDP} with $X_n(0)$ distributed as $\pi_n$ that
\begin{equation*}
S(\gamma(0)) + \int_0^t \cL(\gamma(s),\dot{\gamma}(s)) \dd s = S(\gamma(t)) + \int_0^t \cL(\gamma(s),-\dot{\gamma}(s)) \dd s
\end{equation*}
for all $t \geq 0$ and trajectories $\gamma : [0,t] \rightarrow \bR$ with finite cost. Infinitesimally, we find the fluctuation symmetry
\begin{equation} \label{eqn:time_symmetry_in_grad_flow_case}
\cL(z,v) - \cL(z,-v) = \ip{v}{\dd S(z)}, \qquad \forall \, (z,v) \in T Z.
\end{equation}

This relation can be taken as a starting point to show that $\dot{z} = \bfF(z)$ is a gradient flow for $S$. This has been carried out in \cite{MPR14}. We will recap a part of the arguments of \cite{MPR14} and then proceed to show how GENERIC can be obtained from a generalized fluctuation symmetry.

The arguments in the next sections are purely based on the information on the mesoscopic scale, i.e. on $\cL$, $H$ and $S$. The microscopic information in the processes $X_n$ will not concern us any longer.

\subsection{Gradient flows and fluctuation symmetry}
\label{subsec:GF-and-fluct-symmetry}

In this section, we give a recap of a part of the arguments in \cite{MPR14}. In the following, we will always consider Hamiltonians $\cH : T^*Z \rightarrow \bR$ and $\cL : TZ \rightarrow [0,\infty]$ that are convex and Legendre duals of each other. Additionally, we assume $\cL \geq 0$ and that for each $z \in Z$ there is some $v \in T_z Z$ such that $\cL(z,v) = 0$. Equivalently, we assume that $\cH(z,0) = 0$ for all $z \in Z$.

\smallskip

We first turn to obtaining an equivalent characterization of the fluctuation symmetry of~$\cL$ in terms of the Hamiltonian $\cH$.

\begin{definition}
	Let $S \in C^1(Z)$ and let $\cH : T^*Z \rightarrow \bR$.
	We say that the Hamiltonian $\cH$ is \textit{reversible} with respect to $S$ if $\cH(z,\xi) = \cH(z,\dd S(z) - \xi)$ for all $(z,\xi) \in T^*Z$.
\end{definition}

\begin{proposition} \label{proposition:equiv_fluctuation_symmetry_L_reversible_H}
	Let $V$ be a co-vector field, i.e. $V(z) \in T_z^* Z$. The following are equivalent.
	\begin{enumerate}[(a)]
		\item $\cL$ satisfies the fluctuation symmetry 
		\begin{equation*} 
		\cL(z,v) - \cL(z,-v) = \ip{V(z)}{v}, \qquad \forall (z,v) \in TZ,
		\end{equation*}
		\item We have 
		\begin{equation*}
		\cH(z,\xi) = \cH(z, V(z) - \xi), \qquad \forall (z,\xi) \in T^*Z.
		\end{equation*}
	\end{enumerate}
	In particular, we find that $\cH$ is reversible with respect to $S$ if and only if $\cL$ satisfies the fluctuation symmetry with $V = \dd S$ as in \eqref{eqn:time_symmetry_in_grad_flow_case}.
\end{proposition}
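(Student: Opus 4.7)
The proposition asserts an equivalence between a symmetry of $\cL$ and a symmetry of its Legendre dual $\cH$, so the plan is simply to transport the symmetry through the Legendre transform by a change of variables in the supremum. Since $\cH$ and $\cL$ are convex and Legendre dual to each other under the standing assumptions, both implications will follow from the same elementary computation; in particular I would only need to carry out one direction carefully and invoke biduality for the reverse.

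For (a) $\Rightarrow$ (b), I start from the dual representation
\begin{equation*}
\cH(z,\xi) = \sup_{v \in T_z Z} \left\{\ip{v}{\xi} - \cL(z,v)\right\},
\end{equation*}
and substitute $v \mapsto -v$ (which is a bijection of $T_zZ$), so that
\begin{equation*}
\cH(z,\xi) = \sup_{v \in T_z Z} \left\{-\ip{v}{\xi} - \cL(z,-v)\right\}.
\end{equation*}
Using the fluctuation symmetry $\cL(z,-v) = \cL(z,v) - \ip{V(z)}{v}$ inside the supremum, this becomes
\begin{equation*}
\cH(z,\xi) = \sup_{v \in T_z Z} \left\{\ip{v}{V(z) - \xi} - \cL(z,v)\right\} = \cH\bigl(z, V(z) - \xi\bigr),
\end{equation*}
which is exactly (b).

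For (b) $\Rightarrow$ (a), I would carry out the mirror image computation starting from $\cL(z,v) = \sup_\xi \{\ip{v}{\xi} - \cH(z,\xi)\}$, substituting $\xi \mapsto V(z) - \xi$, and applying $\cH(z, V(z)-\xi) = \cH(z,\xi)$. Alternatively, convexity and lower semicontinuity ensure $\cL$ is the Legendre dual of $\cH$, so the computation above applied to the pair $(\cH,\cL)$ in the opposite direction yields (a). The final remark, that $\cH$ is reversible with respect to $S$ iff $\cL$ satisfies the fluctuation symmetry with $V = \dd S$, is then the special case $V(z) = \dd S(z)$ of the equivalence just established.

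I do not expect any genuine obstacle: the only subtlety is verifying that $v\mapsto -v$ and $\xi \mapsto V(z)-\xi$ are valid changes of variables in the suprema, which is immediate since they are bijections of $T_zZ$ and $T_z^*Z$ respectively, and that the fluctuation symmetry can be substituted inside the supremum, which is valid pointwise in $v$. Everything else is bookkeeping.
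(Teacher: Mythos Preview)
Your argument is correct: the change of variables $v\mapsto -v$ (resp.\ $\xi\mapsto V(z)-\xi$) in the Legendre supremum transports the fluctuation symmetry of $\cL$ to the reflection symmetry of $\cH$ and back, and this is exactly the standard proof. The paper itself does not spell this out but simply cites Proposition~2.1 of \cite{MPR14}, where the same Legendre-duality computation is carried out; so your write-up is essentially a self-contained version of the referenced argument.
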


\begin{proof}
	The result follows from Proposition 2.1 in \cite{MPR14}. 
\end{proof}

\begin{proposition} \label{proposition:S_is_Lyapunov_in_reversible_setting}
	Let $S \in C^1(Z)$ and let $\cH : T^*Z \rightarrow \bR$. Suppose $\cH$ is reversible with respect to $S$. Then $S$ is a Lyapunov function for the zero-cost flow.
\end{proposition}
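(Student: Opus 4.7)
The plan is to deduce the Lyapunov property from the fluctuation symmetry established in Proposition \ref{proposition:equiv_fluctuation_symmetry_L_reversible_H}, evaluated at the zero-cost velocity. Since $\cH$ is reversible with respect to $S$, that proposition (applied with $V = \dd S$) gives us the infinitesimal symmetry
\[
\cL(z,v) - \cL(z,-v) = \ip{\dd S(z)}{v}, \qquad \forall (z,v) \in TZ.
\]
This relation packages all the information we need.

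Now fix $z \in Z$ and let $\mathbf{F}(z) \in T_z Z$ be the zero-cost velocity, characterized by $\cL(z,\mathbf{F}(z)) = 0$. Substituting $v = \mathbf{F}(z)$ into the fluctuation symmetry yields
\[
-\cL(z,-\mathbf{F}(z)) = \ip{\dd S(z)}{\mathbf{F}(z)}.
\]
Since $\cL \geq 0$ by our standing assumption, the left-hand side is non-positive. Hence along any solution $z(\cdot)$ of the zero-cost flow $\dot z = \mathbf{F}(z)$,
\[
\frac{\dd}{\dd t} S(z(t)) = \ip{\dd S(z(t))}{\dot z(t)} = \ip{\dd S(z(t))}{\mathbf{F}(z(t))} = -\cL(z(t),-\mathbf{F}(z(t))) \leq 0,
\]
which is exactly the Lyapunov property.

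No real obstacle arises here: the content of the proposition is essentially a one-line consequence of Proposition \ref{proposition:equiv_fluctuation_symmetry_L_reversible_H} together with the non-negativity of $\cL$. The only subtlety to flag, if one wishes to be careful, is the implicit well-posedness of the zero-cost flow (existence and uniqueness of $\mathbf{F}(z)$), which is guaranteed by the strict convexity of $\cL$ in its second argument together with the standing assumption that for each $z$ some zero-cost direction exists; these have already been imposed at the start of Section \ref{subsec:GF-and-fluct-symmetry}.
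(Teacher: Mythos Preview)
Your proof is correct and follows essentially the same approach as the paper: both use the fluctuation symmetry from Proposition~\ref{proposition:equiv_fluctuation_symmetry_L_reversible_H} evaluated at the zero-cost velocity $\mathbf{F}(z)$, together with $\cL(z,\mathbf{F}(z))=0$ and $\cL\geq 0$, to conclude $\tfrac{\dd}{\dd t}S(z(t)) = -\cL(z(t),-\mathbf{F}(z(t)))\leq 0$. The paper's proof is simply a more compressed version of the same argument.
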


\begin{proof}
	Let $\dot{z}(t) = \bfF(z(t))$, where $\bfF(z)$ denotes the zero-cost flow field. Then
	\begin{equation*}
	\frac{\dd}{\dd t}S(z(t)) = \ip{\dd S(z(t))}{\mathbf{F}(z(t))} = \cL(z(t),\mathbf{F}(z(t)) - \cL(z(t), - \mathbf{F}(z(t)) \leq 0.
	\end{equation*}
\end{proof}

The zero-cost flow corresponding to the a reversible Hamiltonian: $\cL(z,v) = 0 \Leftrightarrow v = \partial_\xi H(z,0)$, can be rewritten, using the symmetry property obtained from the reversibility by
\begin{equation*}
v = \partial_\xi H(z,0) = - \partial_\xi \cH(z,-\dd S(z)).
\end{equation*}
This is a first hint that $\cH$ can be used to write the flow as a gradient flow. To extend beyond this basic property and write the flow in terms of dissipation potentials $(\Psi,\Psi^*)$ that are symmetric and non-negative, the Hamiltonian needs to be appropriately shifted. Note that for a reversible Hamiltonian $\cH$ we have
\begin{equation*}
\argmin_\xi \cH(z,\xi) = \frac{1}{2} \dd S(z).
\end{equation*} 
Recentering the Hamiltonian around this minimal value, we obtain a dissipation potential $\Psi^* : T^*Z \rightarrow \bR^+$:
\begin{equation} \label{eqn:construction_Psi*_from_H}
\Psi^*(z,\xi) = \cH\left(z,\xi + \tfrac{1}{2}\dd S(z)\right) - \cH\left(z,\tfrac{1}{2} \dd S(z)\right).
\end{equation}
$\Psi$ is obtained from $\Psi^*$ by taking the Legendre transform. By construction, $\Psi$ and $\Psi^*$ are symmetric, convex and non-negative. Also, $v = \partial_\xi \cH(z,0) = \partial_\xi \Psi^*\left(z, -\frac{1}{2}\dd S(z)\right)$, which is a hallmark of the gradient flow structure. Note that \eqref{eqn:construction_Psi*_from_H} is equivalent to
\begin{equation}  \label{eqn:construction_H_from_Psi*}
\cH(z,\xi) := \Psi^*\left(z,\xi - \tfrac{1}{2} \dd S(z)\right) - \Psi^*\left(z, -\tfrac{1}{2} \dd S(z) \right).
\end{equation}

The arguments in \cite{MPR14} show that the zero-cost flow corresponding to a reversible Hamiltonian is the gradient flow for $S$ with respect to the dissipation potentials $(\Psi,\Psi^*)$ and vice-versa.

\begin{theorem}[Theorem 2.1 in \cite{MPR14}] \label{theorem:equivalence_Hreversible_gradflow}
	Let $(\cL,\cH)$ and $(\Psi,\Psi^*)$ be two pairs of Legendre duals where $\cH$ and $\Psi^*$ are functionals related as in \eqref{eqn:construction_Psi*_from_H} and \eqref{eqn:construction_H_from_Psi*}. Let $S \in C^1(Z)$ and let $\bfF$ be some vector field.
	
	Then the following two statements are equivalent:
	\begin{enumerate}[(a)]
		\item $\cH$ is a reversible Hamiltonian for $S$ and $\bfF$ is the zero-cost flow for $\cL$.
		\item $\Psi^*$ is symmetric and $\bfF$ is the gradient-flow for $S$ with respect to $(\Psi,\Psi^*)$.
	\end{enumerate}
\end{theorem}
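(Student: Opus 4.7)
The plan is to break the equivalence into two essentially independent parts: (i) the structural equivalence between reversibility of $\cH$ and symmetry of $\Psi^*$, and (ii) under either hypothesis, the equivalence between the zero-cost flow equation for $\cL$ and the gradient-flow equation for $(S,\Psi,\Psi^*)$. The two relations \eqref{eqn:construction_Psi*_from_H} and \eqref{eqn:construction_H_from_Psi*} form the bridge, so the proof is essentially a substitution exercise once one identifies the right object to differentiate or shift.

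For part (i), I would substitute directly in \eqref{eqn:construction_Psi*_from_H}: for every $(z,\xi)\in T^*Z$,
\begin{equation*}
\Psi^*(z,-\xi) = \cH\bigl(z,\tfrac{1}{2}\dd S(z)-\xi\bigr) - \cH\bigl(z,\tfrac{1}{2}\dd S(z)\bigr) = \cH\bigl(z,\dd S(z) - (\tfrac{1}{2}\dd S(z)+\xi)\bigr) - \cH\bigl(z,\tfrac{1}{2}\dd S(z)\bigr).
\end{equation*}
If $\cH$ is reversible with respect to $S$, the right-hand side equals $\cH(z,\tfrac{1}{2}\dd S(z)+\xi)-\cH(z,\tfrac{1}{2}\dd S(z))=\Psi^*(z,\xi)$. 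For the converse, I would substitute in \eqref{eqn:construction_H_from_Psi*} and use the symmetry of $\Psi^*$: $\cH(z,\dd S(z)-\xi) = \Psi^*(z,\tfrac{1}{2}\dd S(z)-\xi)-\Psi^*(z,-\tfrac{1}{2}\dd S(z)) = \Psi^*(z,\xi-\tfrac{1}{2}\dd S(z))-\Psi^*(z,-\tfrac{1}{2}\dd S(z))=\cH(z,\xi)$. Taking Legendre duals turns this into the statement on $\Psi$ being symmetric in its velocity argument.

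For part (ii), the hypothesis $\cH(z,0)=0$ together with $\cL\geq 0$ and strict convexity implies that the zero-cost flow $\bfF(z)$ is the unique minimizer of $\cL(z,\cdot)$, so by convex duality $\bfF(z)$ is characterized by $\bfF(z) = \partial_\xi \cH(z,0)$. Now I would simply differentiate the identity \eqref{eqn:construction_H_from_Psi*} in $\xi$ at $\xi=0$:
\begin{equation*}
\partial_\xi \cH(z,0) = \partial_\xi \Psi^*\bigl(z,-\tfrac{1}{2}\dd S(z)\bigr).
\end{equation*}
Combined with the equivalent characterization \eqref{eq:equiv-def-GF} of the $(S,\Psi,\Psi^*)$-gradient flow, this gives $\bfF(z)=\partial_\xi\Psi^*(z,-\tfrac12 \dd S(z))$, i.e.\ $\bfF$ is the gradient flow. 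Running the argument in reverse uses exactly the same identity.

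The main subtlety, rather than an obstacle, is making sure the regularity hypotheses transfer correctly across the transform \eqref{eqn:construction_Psi*_from_H}-\eqref{eqn:construction_H_from_Psi*}: strict convexity and $C^1$-regularity of $\cH$ in the co-velocity must be inherited by $\Psi^*$ and vice versa, and $\Psi^*(z,0)=0$ must be checked (which follows from the reversibility-symmetry equivalence together with the fact that $\tfrac12\dd S$ is the minimizer of $\cH$). One should also verify that the assumption $\cH(z,0)=0$ corresponds precisely to $\min_\xi \Psi^*(z,\xi)=\Psi^*(z,0)=0$, ensuring the zero-cost characterization is well-posed. Once these checks are in place, the proof is a clean book-keeping of Legendre duality and the affine shift by $\tfrac{1}{2}\dd S(z)$, which is exactly the argument carried out in \cite[Thm.~2.1]{MPR14}.
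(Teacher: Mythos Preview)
Your proposal is correct and close in spirit to the paper's proof, but organized slightly differently. You split the equivalence into the structural part (reversibility of $\cH$ $\Leftrightarrow$ symmetry of $\Psi^*$) and the flow part, and for the latter you work entirely at the level of derivatives: $\bfF(z)=\partial_\xi\cH(z,0)=\partial_\xi\Psi^*(z,-\tfrac12\dd S(z))$, invoking~\eqref{eq:equiv-def-GF}. The paper instead, for (a)$\Rightarrow$(b), computes the full Lagrangian via the shift $\xi\mapsto\xi+\tfrac12\dd S(z)$ to obtain the identity
\[
\cL(z,v)=\Psi(z,v)+\Psi^*\bigl(z,-\tfrac12\dd S(z)\bigr)+\tfrac12\ip{v}{\dd S(z)}\quad\text{for all }v,
\]
and then reads off the gradient-flow equation in its variational form from $\cL(z,\bfF(z))=0$. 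For (b)$\Rightarrow$(a) the paper does exactly what you do. Your route is a little more streamlined for the bare equivalence; the paper's route yields the stronger Lagrangian decomposition as a by-product, which is the form reused later (e.g.\ in the proof of Theorem~\ref{theorem:pre_GENERIC_iff_generalized_fluctuation_symmetry}).
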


We give the proof for completeness and later reference.

\begin{proof}
	We first prove that (a) implies (b). The zero-cost flow satisfies $\cL(z,\mathbf{F}(z)) = 0$, and for all $v$ we have $\cL(x,v) \geq 0$. Therefore
	\begin{align*}
	0 \leq \cL(x,v) & = \sup_{\xi} \left\{\ip{v}{\xi} - \cH(z,\xi) \right\} \\
	& = \sup_\xi \left\{\ip{v}{\xi} - \Psi^*(z,\xi) \right\} - \cH\left(z,\tfrac{1}{2}\dd S(z)\right) + \frac{1}{2}\ip{v}{\dd S(z)} \\
	& = \Psi(z,v) + \Psi^*\left(z, - \tfrac{1}{2} \dd S(z)\right) + \frac{1}{2} \ip{v}{\dd S(z)}.
	\end{align*}
	This implies that indeed $\dot{z} = \mathbf{F}(z) = \partial_\xi H(z,0)$ is a gradient flow for $S$ with respect to the dissipation potentials $(\Psi,\Psi^*)$.
	
	We proceed with the proof that (b) implies (a). The reversibility of $\cH$ with respect to $S$ follows from the symmetry of the functional $\Psi^*$. The gradient flow $\mathbf{F}(z)$ satisfies by \eqref{eq:equiv-def-GF}
	\begin{equation*}
	\bfF(z) = \partial_{\xi} \Psi^*\left(z, - \tfrac{1}{2} \dd S(z) \right).
	\end{equation*}
	As $\cH$ is defined by a shift of $\Psi^*$, we find $\bfF(z) = \partial_{\xi} H(z, 0)$, which is equivalent to $\cL(z,\bfF(z)) = 0$.
\end{proof}

The goal of this paper is to establish a similar connection in the setting of (pre-) GENERIC. We will establish equivalent conditions for the Lagrangian and Hamiltonian in terms of symmetries, that for the zero-cost trajectory leads to an equation that can be suitably interpreted as pre-GENERIC.

\subsection{Pre-GENERIC and fluctuation symmetry around a vector field} \label{section:Pre-GENERIC}
\label{subsec:pre-GENERIC-fluctuation-symmetry}

Corresponding to the equivalence given between the reversibility of a Hamiltonian with respect to an entropy and the time-symmetry relation \eqref{eqn:time_symmetry_in_grad_flow_case} for the associated Lagrangian, we start out with a fluctuation-symmetry that will be a first step towards GENERIC. 

Motivated by the examples considered in \cite{DPZ13}, we assume that our flow consists of a gradient part for some entropy $S \in C^1(Z)$, to which we add a vector field. The fluctuation symmetry will be as in \eqref{eqn:time_symmetry_in_grad_flow_case}, but taken around the vector field $W$:
\begin{equation} \label{eqn:time_symmetry_in_GENERIC_flow_case}
\cL(z,W(z) + v) - \cL(z,W(z) - v) = \ip{\dd S(z)}{v}, \qquad \forall (z,v) \in TZ.
\end{equation}

As has been done in \cite{MPR14} for the first fluctuation symmetry, we establish an equivalent description in terms of the Hamiltonian. Here the important observation, detailed by the proposition below, is that the act of {translating} the symmetry from the origin to $W$, as in moving from~\eqref{eqn:time_symmetry_in_grad_flow_case} to~\eqref{eqn:time_symmetry_in_GENERIC_flow_case},  is equivalent to  \emph{adding} a corresponding term to the Hamiltonian.

\begin{proposition} \label{proposition:symmetry_equivalence_L_H_intermsof_V_W}
	Consider $\cL : TZ \rightarrow [0,\infty]$ and its Legendre transform $\cH : T^*Z \rightarrow \bR$. Let $W$ be a vector field and $V$ a co-vector field, i.e. $W(z) \in T_z Z$ and $V(z) \in T^*_z Z$.	The following are equivalent:
	\begin{enumerate}[(a)]
		\item We have a decomposition $\cH = \cH_1 + \cH_2$ such that $\cH_1(z,\xi) = \ip{\xi}{W(z)}$ and $\cH_2$ is symmetric around $\tfrac{1}{2}V$: $\cH_2(z,\xi) = \cH_2(z,V(z)- \xi)$. 
		\item We have $\cL(z,W(z)+v) - \cL(z,W(z)-v) = \ip{V(z)}{v}$.
	\end{enumerate}
\end{proposition}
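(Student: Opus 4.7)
The plan is to reduce this proposition to the already-established Proposition~\ref{proposition:equiv_fluctuation_symmetry_L_reversible_H} by exploiting the standard fact that adding a linear term $\langle \xi,W(z)\rangle$ to a Hamiltonian on the cotangent side corresponds to a $W(z)$-shift of its Legendre dual on the tangent side. Concretely, I would introduce the auxiliary pair $(\cL_2,\cH_2)$ and verify that a fluctuation symmetry around $W$ for $\cL$ is exactly the classical fluctuation symmetry at the origin for $\cL_2$; the previous proposition then provides the link to symmetry of $\cH_2$ around $V/2$.

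First, for the direction (a)$\Rightarrow$(b), I would start with the decomposition $\cH = \cH_1 + \cH_2$ provided by (a) and compute the Legendre dual:
\begin{equation*}
\cL(z,v) = \sup_{\xi \in T_z^*Z}\bigl\{\ip{v}{\xi} - \ip{W(z)}{\xi} - \cH_2(z,\xi)\bigr\} = \cL_2\bigl(z,\,v - W(z)\bigr),
\end{equation*}
where $\cL_2(z,\cdot)$ is the Legendre dual of $\cH_2(z,\cdot)$. Since $\cH_2 = \cH - \cH_1$ is convex (as the sum of $\cH$ and the linear $-\cH_1$) and the symmetry $\cH_2(z,\xi)=\cH_2(z,V(z)-\xi)$ is precisely the hypothesis of Proposition~\ref{proposition:equiv_fluctuation_symmetry_L_reversible_H} applied to $(\cL_2,\cH_2)$, one obtains $\cL_2(z,v)-\cL_2(z,-v) = \ip{V(z)}{v}$. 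Substituting $v \mapsto v$ and using the shift identity, $\cL(z,W(z)+v) - \cL(z,W(z)-v) = \cL_2(z,v)-\cL_2(z,-v) = \ip{V(z)}{v}$, which is (b).

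For the direction (b)$\Rightarrow$(a), I would \emph{define} $\cH_1(z,\xi):=\ip{W(z)}{\xi}$ and $\cH_2 := \cH - \cH_1$; then again $\cL_2(z,v) := \cL(z,W(z)+v)$ is the Legendre dual of $\cH_2$ by the same computation as above. Hypothesis (b) reads $\cL_2(z,v)-\cL_2(z,-v) = \ip{V(z)}{v}$, so the reverse implication in Proposition~\ref{proposition:equiv_fluctuation_symmetry_L_reversible_H} gives $\cH_2(z,\xi) = \cH_2(z,V(z)-\xi)$, completing (a).

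The only point that needs any care, rather than pure bookkeeping, is the Legendre-shift identity $\cL(z,v) = \cL_2(z,v-W(z))$, which is a one-line calculation but must be written explicitly so that the symmetry of $\cL_2$ around $0$ is cleanly translated into the symmetry of $\cL$ around $W(z)$; once this is in place, the proposition follows immediately from the gradient-flow case. No new obstacle arises beyond verifying that the convexity and Legendre-duality hypotheses assumed in Section~\ref{subsec:GF-and-fluct-symmetry} transfer to $(\cL_2,\cH_2)$, which they do since adding a linear functional preserves both convexity and the involutive character of the Legendre transform.
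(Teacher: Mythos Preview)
Your proposal is correct and follows essentially the same approach as the paper: both arguments hinge on the Legendre-shift identity $\cL(z,W(z)+v)=\cL_2(z,v)$ (equivalently $\cL(z,v)=\cL_2(z,v-W(z))$) and then invoke Proposition~\ref{proposition:equiv_fluctuation_symmetry_L_reversible_H} for the pair $(\cL_2,\cH_2)$. The paper's proof is slightly terser, but the logical content is identical.
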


\begin{proof}
	Let $\cL_2$ denote the Legendre transform of $\cH_2$. Then, we have
	\begin{align*}
	\cL(z,W(z) + v) & = \sup_\xi \left\{ \ip{\xi}{W(z) + v} - \cH(z,\xi) \right\} \\
	& = \sup_\xi \left\{ \ip{\xi}{v} - \cH_2(z,\xi) \right\} \\
	& = \cL_2(z,v)
	\end{align*}
	for all $(z,v) \in TZ$. Thus, the equivalence is derived via the application of Proposition \ref{proposition:equiv_fluctuation_symmetry_L_reversible_H} for~$\cL_2$.
\end{proof}

We will use the result above for the setting where $V = \dd S$. In particular settings, e.g. for GENERIC, we will use a specific $W$ also, e.g. $W = L \dd E$.

If GENERIC is to be found via microscopic origins as in the gradient-flow setting, we expect $S$ to be a Lyapunov function for the zero-cost flow as in \eqref{eqn:GENERIC_S_Lyapunov}. A decomposition of the type above with $V = \dd S$ is not sufficient for $S$ to be a Lyapunov function for the zero-cost flow. In the argument of Proposition \ref{proposition:S_is_Lyapunov_in_reversible_setting}, we used that $\cL(z,\mathbf{F}(z)) = 0$. If we aim to use a similar trick in this setting, we need to shift our inner-product $\ip{\dd S(z)}{\dd \mathbf{F}(z)}$. Let $\dot{z}(t) = \mathbf{F}(z(t))$, then (writing $z(t) = z$ for brevity)
\begin{eqnarray}
\frac{\dd}{\dd t} S(z) & = & \ip{\dd S(z)}{\bfF(z)} \notag \\
& = & \ip{\dd S(z)}{W(z)} + \ip{\dd S(z)}{\bfF(z) - W(z)}  \notag \\
& \stackrel{\eqref{eqn:time_symmetry_in_GENERIC_flow_case}}= & \ip{\dd S(z)}{W(z)} + \cL\bigl(z,\bfF(z)\bigr) - \cL\bigl(z,2W(z) - \bfF(z)\bigr) \notag \\
& \leq & \ip{\dd S(z)}{W(z)}.
\label{eqn:pre_GENERIC_S_Lyapunov}
\end{eqnarray}
Thus, to conclude that $S$ is a Lyapunov function for the zero-cost flow, it suffices to have $\ip{\dd S(z)}{W(z)} \leq 0$ for all $z$. Since we are working towards microscopic origins of GENERIC, we will however assume that the vector field $W$ and the co-vector field $\dd S$ are orthogonal. This leads to the notion of pre-GENERIC in Definition \ref{definition:pre_GENERIC}, that does not take into account a conserved energy $E$ or operators $L$.

The following theorem generalizes Theorem \ref{theorem:equivalence_Hreversible_gradflow} to the pre-GENERIC setting, thus establishing that pre-GENERIC is the relevant notion when considering the generalized fluctuation symmetry  \eqref{eqn:time_symmetry_in_GENERIC_flow_case}.

\begin{theorem} \label{theorem:pre_GENERIC_iff_generalized_fluctuation_symmetry}
	Let $(\Psi,\Psi^*)$ be a pair of Legendre duals. Let $S \in C^1(Z)$ and let $\bfF$  be a vector field.
	
	Let $\cH_1(z,\xi) = \ip{W(z)}{\xi}$ for some vector field $W$ which is orthogonal to $\dd S$: $\ip{W(z)}{\dd S(z)} = 0$ and let $\cH_2$ be related to $\Psi^*$ as in \eqref{eqn:construction_Psi*_from_H} and \eqref{eqn:construction_H_from_Psi*}. Finally, let $\cH := \cH_1 + \cH_2$ and let $\cL$ be the Legendre dual of $\cH$.

	The following are equivalent:
	\begin{enumerate}[(a)]
		\item \label{itemThm:zero_cost} $\cH_2$ is reversible for $S$ and $\bfF$ is the zero-cost flow for $\cL$.
		\item \label{itemThm:pregeneric} $\Psi^*$ is symmetric and $\bfF$ is pre-GENERIC for $S$ with dissipation potentials $(\Psi,\Psi^*)$ and vector field $W$.
	\end{enumerate}	
	If (a) or (b) is satisfied, the functional $S$ is a Lyapunov function for $\bfF$.
\end{theorem}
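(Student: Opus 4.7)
The plan is to reduce the statement to the gradient-flow case already handled by Theorem \ref{theorem:equivalence_Hreversible_gradflow}. The key observation, which is essentially already inside the proof of Proposition \ref{proposition:symmetry_equivalence_L_H_intermsof_V_W}, is that the linear piece $\cH_1(z,\xi) = \ip{W(z)}{\xi}$ acts as a translation in the Legendre transform: for every $(z,v) \in TZ$,
\begin{equation*}
\cL(z,v) = \sup_\xi\bigl\{\ip{v}{\xi} - \ip{W(z)}{\xi} - \cH_2(z,\xi)\bigr\} = \cL_2\bigl(z, v - W(z)\bigr),
\end{equation*}
where $\cL_2$ denotes the Legendre dual of $\cH_2$ alone.

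To prove (a) $\Rightarrow$ (b), I would first note that reversibility of $\cH_2$ with respect to $S$ is equivalent to symmetry of $\Psi^*$: the construction \eqref{eqn:construction_Psi*_from_H}--\eqref{eqn:construction_H_from_Psi*} applied to $\cH_2$ recenters the Hamiltonian around $\tfrac12\dd S(z)$, which turns the reflection $\xi \mapsto \dd S(z) - \xi$ into $\xi \mapsto -\xi$. Next, the translation identity above shows that $\cL(z,\bfF(z)) = 0$ if and only if $\bfF(z) - W(z)$ is the zero-cost flow for $\cL_2$, so Theorem \ref{theorem:equivalence_Hreversible_gradflow} applied to $(\cL_2,\cH_2)$ gives
\begin{equation*}
\Psi\bigl(z, \bfF(z) - W(z)\bigr) + \Psi^*\bigl(z, -\tfrac{1}{2}\dd S(z)\bigr) + \tfrac{1}{2}\ip{\bfF(z) - W(z)}{\dd S(z)} = 0.
\end{equation*}
The orthogonality hypothesis $\ip{W(z)}{\dd S(z)} = 0$ collapses the final inner product to $\tfrac12\ip{\bfF(z)}{\dd S(z)}$, which is exactly the pre-GENERIC relation \eqref{eqn:preGENERIC}. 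The converse (b) $\Rightarrow$ (a) is obtained by running the same chain of equalities backwards, again using orthogonality to restore the full $\bfF(z) - W(z)$ under the inner product before invoking Theorem \ref{theorem:equivalence_Hreversible_gradflow}.

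For the Lyapunov property, once (b) has been established the usual computation from \eqref{eqn:GENERIC_S_Lyapunov} applies verbatim: $\dd S(z(t))/\dd t = -2(\Psi(z,\bfF(z)-W(z)) + \Psi^*(z,-\tfrac12 \dd S(z))) \leq 0$ by non-negativity of the dissipation potentials. Alternatively, one can bypass pre-GENERIC and argue directly from (a) via the fluctuation symmetry \eqref{eqn:time_symmetry_in_GENERIC_flow_case} and $\cL \geq 0$ along the lines of \eqref{eqn:pre_GENERIC_S_Lyapunov}, where orthogonality kills the $\ip{\dd S(z)}{W(z)}$ term.

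I do not anticipate any serious obstacle here: the whole argument is bookkeeping, built on two ingredients already established in the paper—the translation identity for the Legendre transform of $\cH_1 + \cH_2$, and the gradient-flow equivalence of Theorem \ref{theorem:equivalence_Hreversible_gradflow}. The mildly delicate point is making sure that the orthogonality hypothesis is inserted at exactly the right moment so that the extra $\tfrac12\ip{W(z)}{\dd S(z)}$ produced by the shift does indeed vanish; this is the one place where the pre-GENERIC structure genuinely uses something beyond a mechanical reduction to the reversible case.
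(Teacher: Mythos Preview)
Your proposal is correct and follows essentially the same approach as the paper: both reduce to Theorem~\ref{theorem:equivalence_Hreversible_gradflow} via the translation identity $\cL(z,v) = \cL_2(z,v-W(z))$ to pass between the zero-cost flows of $\cL$ and $\cL_2$, and both insert the orthogonality $\ip{W(z)}{\dd S(z)}=0$ at the same point to convert the gradient-flow relation for $\bfF - W$ into the pre-GENERIC relation for $\bfF$. The paper's argument for (b) $\Rightarrow$ (a) is slightly more explicit (it computes $\partial_\xi\cH(z,0)$ directly rather than saying ``run backwards''), but the content is identical.
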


As a corollary, we obtain conditions on the Hamiltonians that are equivalent to GENERIC.

\begin{corollary}\label{corollary:GENERIC_iff_generalized_fluctuation_symmetry_orthogonality}
	Consider the setting of Theorem \ref{theorem:pre_GENERIC_iff_generalized_fluctuation_symmetry}. Let $E \in C^1(Z)$ and let $L = \{L(z)\}_{z \in Z}$ be a family of operators $L(z) : T_z^* Z \rightarrow T_z Z$ that are `anti-symmetric' and satisfy the Jacobi identity (cf.\ Definition \ref{definition:GENERIC_original_generalized}\ref{item:def_GENERIC_L_antisymmetric_Jacobi}). Finally suppose that $W(z) = L(z) \dd E(z)$.

	Then the  following are equivalent:
	\begin{enumerate}[(a)]
		\item \label{itemCor:zero_cost} $\cH_2$ is reversible for $S$, $L \dd S = 0$, and the map $\alpha \mapsto \cH_2(z,\xi + \alpha \dd E(z))$ is constant; $\bfF$ is the zero-cost flow for $\cL$;
		\item \label{itemCor:generic} $\Psi^*$ is symmetric and $\bfF$ is GENERIC for $S,E$, dissipation potentials $(\Psi,\Psi^*)$ and map $L$.
	\end{enumerate}
	If (a) or (b) is satisfied, then $S$ is a Lyapunov function for $\bfF$ and the functional $E$ is constant along $\bfF$.
\end{corollary}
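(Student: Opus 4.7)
The plan is to reduce this corollary directly to Theorem \ref{theorem:pre_GENERIC_iff_generalized_fluctuation_symmetry} and then separately match the remaining conditions that distinguish GENERIC from pre-GENERIC. The corollary lists two extra items on each side beyond the pre-GENERIC theorem: on the (a)-side, the degeneracy $L\dd S = 0$ and the invariance of $\cH_2$ under shifts of its second argument by multiples of $\dd E$; on the (b)-side, $L\dd S = 0$ and the invariance of $\Psi^*$ under the same shifts. So the work splits into two small checks.

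First I would verify that the pre-GENERIC orthogonality $\ip{W}{\dd S}=0$, required by Theorem \ref{theorem:pre_GENERIC_iff_generalized_fluctuation_symmetry}, is a consequence of $L\dd S = 0$ when $W = L\dd E$. This is immediate from the antisymmetry of $L$: $\ip{L\dd E(z)}{\dd S(z)} = -\ip{\dd E(z)}{L\dd S(z)} = 0$. Therefore, under either (a) or (b), the hypotheses of Theorem \ref{theorem:pre_GENERIC_iff_generalized_fluctuation_symmetry} are in force, and that theorem supplies the equivalence between ``$\cH_2$ is reversible for $S$ and $\bfF$ is the zero-cost flow for $\cL$'' and ``$\Psi^*$ is symmetric and $\bfF$ is pre-GENERIC for $S$ with $(\Psi,\Psi^*)$ and $W$''. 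What remains is to translate the remaining invariance condition between $\cH_2$ and $\Psi^*$.

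For this translation I would use the shift identity \eqref{eqn:construction_Psi*_from_H}, which rewrites
\begin{equation*}
\Psi^*(z,\xi + \alpha\dd E(z)) = \cH_2\bigl(z, \xi + \alpha\dd E(z) + \tfrac{1}{2}\dd S(z)\bigr) - \cH_2\bigl(z, \tfrac{1}{2}\dd S(z)\bigr).
\end{equation*}
Since the second term does not depend on $\xi$ or $\alpha$, the map $\alpha \mapsto \Psi^*(z,\xi+\alpha\dd E(z))$ is constant if and only if the map $\alpha \mapsto \cH_2(z, \eta + \alpha\dd E(z))$ is constant, where $\eta = \xi + \tfrac12\dd S(z)$ ranges over all of $T_z^*Z$. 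Thus the two invariance conditions are logically identical, and together with the shared condition $L\dd S=0$ this upgrades the pre-GENERIC equivalence to the desired GENERIC equivalence. The antisymmetry and Jacobi identity of $L$ are part of the standing hypothesis and are simply carried through into Definition \ref{definition:GENERIC_original_generalized}.

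Finally, the auxiliary claims are short. That $S$ is a Lyapunov function follows from the pre-GENERIC case via Theorem \ref{theorem:pre_GENERIC_iff_generalized_fluctuation_symmetry}, since the gradient-flow contribution dissipates $S$ while $\ip{L\dd E}{\dd S}=0$ kills the conservative contribution. That $E$ is constant along $\bfF$ is precisely the computation already carried out in Section \ref{subsec:GENERIC} leading up to \eqref{eqn:GENERIC_S_Lyapunov}: antisymmetry of $L$ gives $\ip{\dd E}{L\dd E}=0$, and the invariance of $\Psi^*$ under shifts by $\alpha\dd E$ gives $\langle\dd E,\partial_\xi\Psi^*(z,-\tfrac12\dd S)\rangle = \partial_\alpha\Psi^*(z,-\tfrac12\dd S+\alpha\dd E)\big|_{\alpha=0} = 0$. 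I do not anticipate any real obstacle; the only subtlety is choosing to prove the equivalence via the pre-GENERIC intermediate rather than re-running the Legendre-transform argument from scratch, which keeps the proof to essentially a bookkeeping exercise on the shift identity.
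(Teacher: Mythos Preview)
Your proposal is correct and follows exactly the route the paper intends: the paper's own proof is the single sentence ``Both implications are immediate,'' and what you have written is precisely the unpacking of that sentence---reduce to Theorem~\ref{theorem:pre_GENERIC_iff_generalized_fluctuation_symmetry}, then match $L\dd S=0$ on both sides and use the shift relation~\eqref{eqn:construction_Psi*_from_H} to identify the $\alpha$-invariance of $\cH_2$ with that of $\Psi^*$. Your derivations of the Lyapunov property of $S$ and the conservation of $E$ also mirror the computations already done in Section~\ref{subsec:GENERIC}.
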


\begin{proof}[Proof of Theorem \ref{theorem:pre_GENERIC_iff_generalized_fluctuation_symmetry}]
	We start with the proof that \eqref{itemThm:zero_cost} implies \eqref{itemThm:pregeneric}. Let $\cL_2$ be the Lagrangian corresponding to $\cH_2$. The zero-cost flow of $\cL_2$ is given by $\bfF(z) - W(z)$. As $\cH_2$ is reversible for $S$, we find by Theorem \ref{theorem:equivalence_Hreversible_gradflow} (a) to (b) that $\bfF(z) - W(z)$ is the gradient flow for $S$ with respect to  $(\Psi,\Psi^*)$:
	\begin{equation*}
	\Psi(z,\bfF(z) - W(z)) + \Psi^*\left(z, - \tfrac{1}{2} \dd S(z) \right) + \frac{1}{2} \ip{\bfF(z) - W(z)}{\dd S(z)} = 0.
	\end{equation*}
	Using that $\ip{W(z)}{\dd S(z)} = 0$, we find that the zero-cost flow $\bfF$ of $\cL$ is pre-GENERIC for $S$, dissipation potentials $(\Psi,\Psi^*)$ and vector field $W$.
	
	\smallskip
	
	We proceed with the proof that \eqref{itemThm:pregeneric} implies \eqref{itemThm:zero_cost}. By Using that $\bfF$ is pre-GENERIC for $S$ with dissipation potentials $(\Psi,\Psi^*)$ and vector field $W$, we find by Theorem \ref{theorem:equivalence_Hreversible_gradflow} (b) to (a), and the property that $\ip{W(z)}{\dd S(z)} = 0$ that $\bfF - W$ is the zero-cost flow for $\cL_2$, the Lagrangian associated to $\cH_2$. Thus, we have $\bfF(z) - W(z) = \partial_{\xi} \cH_2(z,0)$. On the other hand 
	\begin{equation*}
	\partial_{\xi} \cH(z,0) = \partial_{\xi} \cH_1(z,0) + \partial_\xi \cH_2(z,0) = W(z) + \bfF(z) - W(z) = \bfF(z),
	\end{equation*}
	which implies that the pre-GENERIC flow is the zero-cost flow for $\cL$.
	
	\smallskip
	
	The final statement follows from \eqref{eqn:pre_GENERIC_S_Lyapunov} and the property that $\ip{W(z)}{\dd S(z)} = 0$.
\end{proof}

\begin{proof}[Proof of Corollary \ref{corollary:GENERIC_iff_generalized_fluctuation_symmetry_orthogonality}]
	Both implications are immediate.
\end{proof}

\subsection{Returning to stochastic processes: Pre-GENERIC from large deviations} \label{section:returning_to_SP}

In Section~\ref{section:mo_LDP} we explained how Hamiltonians and Lagrangians arise in the context of large-deviation principles of stochastic processes. In Sections~\ref{subsec:GF-and-fluct-symmetry} and~\ref{subsec:pre-GENERIC-fluctuation-symmetry} we temporarily disregarded the stochastic origin of these functionals, and focused instead on consequences of symmetry properties of these functionals for the deterministic evolutions that they generate. We now return to the stochastic context and consider the consequences of Sections~\ref{subsec:GF-and-fluct-symmetry} and~\ref{subsec:pre-GENERIC-fluctuation-symmetry} for stochastic processes. 

\medskip

The example that we give here is in $\bR^d$, but we believe the general structure to be much more widely valid. The infinite-dimensional examples in Section~\ref{section:final_example_AT_and_VFP} illustrate this. 

Let $Z=\bR^d$, and consider a sequence of stochastic processes $Z_n$ in $\bR^d$, with generator~$A_n$. To mirror the setup of the theorems above, we assume that $A_n$ has a decomposition $A_n = A_{1,n}+A_{2,n}$ on a common core, and that the `sub-generators' $A_{1,n}$ and $A_{2,n}$ independently generate stochastic processes $X_n$ and $Y_n$. 

In addition, we assume that 
\begin{enumerate}[(a)]
\item $A_n$, $A_{1,n}$, and $A_{2,n}$ share a common unique invariant measure $\pi_n$, and that $\pi_n$ satisfies a large deviation principle on $\bR^d$ with speed $n$ and with rate function $S\in C^1(\bR^d)$;
\item \label{assumption:pi_n-reversible}
$\pi_n$ is reversible for $A_{2,n}$;
\item Each of the processes $X_n$, $Y_n$, and $Z_n$ satisfy a large-deviation principle on $D_{\bR^d}([0,\infty))$ with speed $n$ and with rate function $I$ of Lagrangian form. The Lagrangians $\cL$, $\cL_1$, and $\cL_2$ correspond to Hamiltonians $H$, $H_1$, and $H_2$, satisfying  $H = H_1 + H_2$ and 
\[
Hf(z) = \cH(z,\dd f(z)), \qquad 
H_1f(z) = \cH_1(z,\dd f(z)), \qquad \text{and}\qquad
H_2f(z) = \cH_2(z,\dd f(z));
\] 
\item 
\label{assumption:linear_cH_1}
$\cH_{1}(z,\xi) = \ip{W(z)}{\xi}$ for some vector field $W\in C(\bR^d;\bR^d)$ and $\cH_2$ is continuous and convex in the second coordinate; 
\end{enumerate}
%
%

\begin{proposition} \label{proposition:pre-GENERIC_from_LDP}
Consider the setup above. Then the limiting dynamics are pre-GENERIC for $S$ with vector field $W$ and dissipation potentials $(\Psi,\Psi^*)$ defined in terms of $\cH_2$ as in Theorem \ref{theorem:pre_GENERIC_iff_generalized_fluctuation_symmetry}.
\end{proposition}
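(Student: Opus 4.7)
The plan is to verify the hypotheses of item (a) of Theorem \ref{theorem:pre_GENERIC_iff_generalized_fluctuation_symmetry}, since its equivalence with item (b) then yields pre-GENERIC for the zero-cost flow of $\cL$, and by the setup of Section \ref{section:mo_LDP} this zero-cost flow is exactly the macroscopic limiting dynamics. Two facts must be checked: (i) reversibility of $\cH_2$ with respect to $S$, and (ii) the orthogonality $\ip{W(z)}{\dd S(z)} = 0$. The remaining hypotheses, namely the decomposition $\cH = \cH_1 + \cH_2$ with $\cH_1(z,\xi) = \ip{W(z)}{\xi}$ and the derivation of $\Psi^*$ from $\cH_2$ via the recentering \eqref{eqn:construction_Psi*_from_H}, are built directly into the assumptions of the proposition.

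For (i) I plan to transfer reversibility from the stochastic process to the Hamiltonian through the Lagrangian. Assumption \ref{assumption:pi_n-reversible} states that $Y_n$ is reversible with respect to $\pi_n$; combined with the LDP of $\pi_n$ with rate $S$ and the LDP of $Y_n$ with Lagrangian $\cL_2$, the time-reversal computation recalled at the end of Section \ref{section:mo_LDP} yields the infinitesimal fluctuation symmetry
\begin{equation*}
\cL_2(z,v) - \cL_2(z,-v) = \ip{v}{\dd S(z)}, \qquad \forall \, (z,v) \in TZ.
\end{equation*}
Proposition \ref{proposition:equiv_fluctuation_symmetry_L_reversible_H} then converts this into reversibility of $\cH_2$ with respect to $S$.

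For (ii) I plan to exploit the joint invariance of $\pi_n$. Reversibility of $\pi_n$ for $A_{2,n}$ implies invariance, and invariance for $A_n$ is part of assumption (a); linearity of the decomposition $A_n = A_{1,n} + A_{2,n}$ then yields invariance of $\pi_n$ for $A_{1,n}$ as well. Since $\cH_1$ is linear in its second argument, the large-deviation limit of the process $X_n$ generated by $A_{1,n}$ is the purely deterministic flow $\dot z = W(z)$. Compatibility of the invariance of $\pi_n$ (with LDP rate $S$) with this deterministic dynamics amounts, at the level of large deviations, to the stationary Hamilton--Jacobi identity $\cH_1(z,\dd S(z)) = 0$, which here reads precisely $\ip{W(z)}{\dd S(z)} = 0$.

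With (i) and (ii) in hand, Theorem \ref{theorem:pre_GENERIC_iff_generalized_fluctuation_symmetry} applies and produces the pre-GENERIC structure claimed. I expect the main technical obstacle to lie in the rigorous derivation of the stationary Hamilton--Jacobi identity in step (ii) from invariance of $\pi_n$ for $A_{1,n}$ combined with the LDP for $\pi_n$; this is a delicate point in general stationary large-deviation theory and typically requires additional regularity beyond the mere convergence of the $H_n$. In the deterministic setting considered here, however, the identity reduces to the elementary conservation statement $\tfrac{\dd}{\dd t}S(z(t)) = 0$ along orbits of $\dot z = W(z)$, and it is verified by direct computation in the concrete infinite-dimensional examples of Section \ref{section:final_example_AT_and_VFP}.
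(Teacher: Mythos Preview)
Your proposal is correct and follows essentially the same route as the paper: verify condition~(a) of Theorem~\ref{theorem:pre_GENERIC_iff_generalized_fluctuation_symmetry} by (i) invoking reversibility of $Y_n$ to obtain reversibility of $\cH_2$ with respect to $S$ (the paper cites \cite[Prop.~3.7]{MPR14} directly rather than spelling out the fluctuation-symmetry argument via Proposition~\ref{proposition:equiv_fluctuation_symmetry_L_reversible_H}, but the content is the same), and (ii) using invariance of $\pi_n$ for $A_{1,n}$ together with the LDP for $\pi_n$ to conclude $\ip{W}{\dd S}=0$. For step~(ii) the paper makes your ``stationary Hamilton--Jacobi'' heuristic concrete via the value-function representation and the contraction principle: it shows that if $u_0=S$ then $u(t,\cdot)=S$, and since $\cL_1(z,v)=0$ iff $v=W(z)$ (and $+\infty$ otherwise) this reduces exactly to the conservation statement $S(\gamma(t))=S(\gamma(0))$ along orbits of $\dot z = W(z)$, which you already identified as the endpoint of the argument.
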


\begin{proof}[Proof of Proposition \ref{proposition:pre-GENERIC_from_LDP}]
We prove the proposition by showing that condition \eqref{itemThm:zero_cost} of Theorem~\ref{theorem:pre_GENERIC_iff_generalized_fluctuation_symmetry} are satisfied with $\bfF$ the limiting dynamics of the processes $Z_n$. Since $Y_n$ is a reversible stochastic process (assumption~(\ref{assumption:pi_n-reversible})), the reversibility of $\cH_2$ for $S$ is given by~\cite[Prop.~3.7]{MPR14}. 

We now show that $\cH_1(z,\dd S(z)) \stackrel{(\ref{assumption:linear_cH_1})}= \ip{W(z)}{\dd S(z)}$ vanishes. For a function $u_0 : \bR^d \rightarrow \bR$, define for $t>0$ the value function
	\begin{equation*}
	u(t,z) := \inf_{\substack{\gamma \in \cA \cC \\ \gamma(t) = z}} \left\{u_0(\gamma(0)) + \int_0^t \cL_1(\gamma(s),\dot{\gamma}(s)) \dd s \right\}.
	\end{equation*}
	By the contraction principle we find that if $u_0$ is the rate function of the  law of the process~$X_n$ at time zero, then $z \mapsto u(t,z)$ is the rate function of the law of $X_n$ at time $t$. Since $S$ is the rate function of the stationary measures, we find that if $u_0 = S$ then $u(t,\cdot) = S(\cdot)$; therefore, for all $t>0$, 
\[
S(z) = \inf_{\substack{\gamma \in \cA \cC \\ \gamma(t) = z}} \left\{S(\gamma(0)) + \int_0^t \cL_1(\gamma(s),\dot{\gamma}(s)) \dd s \right\}.
\]

Now $\cL_1(z,v)$ equals $0$ if $v=W(z)$, and $+\infty$ otherwise. Therefore this identity reduces to
\[
S(\gamma(t)) = S(\gamma(0)), 
\]
where $t>0$ is arbitrary and $\gamma:[0,t]\to\bR^d$ is any curve such that $\dot \gamma(s) = W(\gamma(s))$ for all $0\leq s\leq t$. By taking the limit $t\to0$ we find that $\ip{\dd S(z)}{W(z)} = 0$ for all $z\in \bR^d$. 

Therefore the conditions of Theorem~\ref{theorem:pre_GENERIC_iff_generalized_fluctuation_symmetry} are satisfied, and we find that the zero-cost flow is pre-GENERIC.

\end{proof}

\section{From pre-GENERIC to GENERIC} 
\label{section:pre_GENERIC_to_GENERIC}

In \cite{DPZ13} it is shown shown that the underdamped Fokker-Planck equation can be turned into GENERIC by adding an appropriate energy variable. The underdamped Fokker-Planck equation has a pre-GENERIC structure, as we will show in Section \ref{section:final_example_AT_and_VFP} below. It turns out that a pre-GENERIC system can be systematically extended into the GENERIC structure by the addition of an auxiliary variable which takes the role of the energy $E$, as we show below in Section \ref{section:adding_energy_variable}. 

This extension is not unique, and in certain contexts other extensions make more physical sense. This happens in the setting of the underdamped Fokker-Planck equation in which there is a physical energy that is not conserved under the dynamics. In Section \ref{section:change_of_variables_GENERIC}, we will consider a change of coordinates that turns our GENERIC representation of the Fokker-Planck equation in a physically more sensible one that is equivalent to the one in \cite{DPZ13}.

\subsection{Adding an energy variable for the macroscopic dynamics} \label{section:adding_energy_variable}

The structure will be as follows. Consider a pre-GENERIC system with components $S \in C^1(Z)$, $\Psi$, and $W$, and equation $\dot z = W(z) + \partial_\xi \Psi^*(z,-\tfrac12 \dd S(z))$. We now add an artificial variable $\aux\in \bR$, to form a pair $(z,\aux)\in \widehat{Z}:=Z\times \bR$. (The tangent and cotangent spaces are $T_{z,\aux}\widehat{Z} = T_z Z \times \bR$ and $T_{z,\aux}^*\widehat{Z}  = T_z Z \times \bR$.) Since a pre-GENERIC system has no concept of energy, we need to introduce one; the energy will be the mapping $\widehat E(z,\aux):= \aux$, and since the energy should be conserved the evolution equation for $\aux$ will be $\dot \aux = 0$. Theorem~\ref{theorem:pre_GENERIC_to_GENERIC} below formulates the resulting set of equations,
\begin{align*}
\dot z &= W(z) + \partial_\xi \Psi^*(z,-\tfrac12 \dd S(z)),\\
\dot \aux &= 0,
\end{align*}
as a full GENERIC system.

\begin{theorem} \label{theorem:pre_GENERIC_to_GENERIC}
	Let $S \in C^1(Z)$. Let $W$ be some vector field and let $\cH = \cH_1 + \cH_2$ with $\cH_i : T^*Z \rightarrow \bR$ and $\Psi^* : T^*Z \rightarrow \bR^+$ be functionals with $\cH_1(z,\xi) = \ip{W(z)}{\xi}$. Let $\bfF$ be a vector field.
	
	Define $\widehat{\bfF} = (\bfF,0)$, $\widehat{S}(z,\aux) = S(z)$, $\widehat{E}(z,\aux) = \aux$,
	\begin{equation*}
	\widehat{\Psi}^*\left(\begin{pmatrix}
	z \\ \aux
	\end{pmatrix}, \begin{pmatrix}
	\xi \\ r
	\end{pmatrix}\right) =  \Psi^*(z,\xi), \qquad
	\widehat{\cH}_i\left(\begin{pmatrix}
	z \\ \aux
	\end{pmatrix}, \begin{pmatrix}
	\xi \\ r
	\end{pmatrix}\right) = \cH_i(z,\xi), \quad i \in \{1,2\},
	\end{equation*}
	and
	\begin{equation*}
	\widehat{L}(z,\aux) \begin{pmatrix}
	\xi \\ r
	\end{pmatrix} = \begin{pmatrix}
	r W(z) \\ - \ip{W(z)}{\xi}
	\end{pmatrix} = \begin{pmatrix}
	0 & W(z) \\ - W(z) & 0
	\end{pmatrix} \begin{pmatrix}
	\xi \\ r
	\end{pmatrix}.
	\end{equation*}
	The maps $\widehat{\Psi}^*$ and $\widehat{\cH}_2$ are related to each other as in \eqref{eqn:construction_Psi*_from_H} and \eqref{eqn:construction_H_from_Psi*}. Furthermore the map $\widehat{L}$ is anti-symmetric and satisfies the Jacobi identity. If $\ip{W(z)}{\dd S(z)} = 0$, then $\widehat{L} \dd \widehat{S} = 0$. Finally, we have the following two results:
	\begin{enumerate}[(a)]
		\item If Theorem \ref{theorem:pre_GENERIC_iff_generalized_fluctuation_symmetry} (\ref{itemThm:zero_cost}) holds for $\cH, S$ and $\bfF$, then Corollary \ref{corollary:GENERIC_iff_generalized_fluctuation_symmetry_orthogonality} (\ref{itemCor:zero_cost}) holds for $\widehat{\cH}, \widehat{S},\widehat{E},\widehat{L}$ and $\widehat{\bfF}$.
		\item If Theorem \ref{theorem:pre_GENERIC_iff_generalized_fluctuation_symmetry} (\ref{itemThm:pregeneric}) holds for $\Psi^*, S$ and $\bfF$, then Corollary \ref{corollary:GENERIC_iff_generalized_fluctuation_symmetry_orthogonality} (\ref{itemCor:generic}) holds for $\widehat{\Psi}^*, \widehat{S},\widehat{E},\widehat{L}$ and $\widehat{\bfF}$.
	\end{enumerate}

\end{theorem}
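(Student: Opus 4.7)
The plan is to break the theorem into several independent verifications that exploit the product structure $\widehat{Z}=Z\times\bR$. Throughout I will use $\dd\widehat{S}(z,\aux)=(\dd S(z),0)$ and $\dd\widehat{E}(z,\aux)=(0,1)$, which give $\widehat{L}(z,\aux)\dd\widehat{E}=(W(z),0)$ and $\widehat{L}(z,\aux)\dd\widehat{S}=(0,-\langle W(z),\dd S(z)\rangle)$. The degeneracy $\widehat{L}\dd\widehat{S}=0$ under the hypothesis $\langle W,\dd S\rangle=0$ is then immediate. Anti-symmetry of $\widehat{L}$ is the one-line pairing identity $\langle\widehat{L}(\xi,r),(\eta,s)\rangle=r\langle W,\eta\rangle-s\langle W,\xi\rangle$. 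The relations \eqref{eqn:construction_Psi*_from_H}--\eqref{eqn:construction_H_from_Psi*} between $\widehat{\Psi}^*$ and $\widehat{\cH}_2$ lift coordinate-wise from the corresponding relations between $\Psi^*$ and $\cH_2$, because both hat-functionals are independent of $r$ and $\dd\widehat{S}$ has vanishing $\aux$-component.

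The only step requiring genuine computation will be the Jacobi identity. For $F,G,K\in C^1(\widehat{Z})$ I would rewrite the induced bracket as
\begin{equation*}
\{F,G\}(z,\aux)=\partial_\aux G\cdot D_W F - \partial_\aux F\cdot D_W G,\qquad D_W F:=\langle W(z),\dd_z F(z,\aux)\rangle,
\end{equation*}
and observe that because $W$ is independent of $\aux$ we have $[\partial_\aux,D_W]=0$. Expanding the cyclic sum $\{\{F,G\},K\}+\{\{G,K\},F\}+\{\{K,F\},G\}$ by the Leibniz rule produces terms involving $D_W^2$, $\partial_\aux^2$ or the mixed $\partial_\aux D_W$ acting on one of $F,G,K$. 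The hard part will be to see that each class cancels in the cyclic sum: the $D_W^2$ and $\partial_\aux^2$ contributions cancel in pairs by commutativity of scalar multiplication, while the mixed terms cancel after invoking $\partial_\aux D_W F = D_W \partial_\aux F$, which is precisely what would fail if $W$ were allowed to depend on $\aux$. This is mechanical bookkeeping but is the main technical nuisance of the proof.

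For part (a), assume Theorem~\ref{theorem:pre_GENERIC_iff_generalized_fluctuation_symmetry}~(\ref{itemThm:zero_cost}). I would verify each of the three conditions of Corollary~\ref{corollary:GENERIC_iff_generalized_fluctuation_symmetry_orthogonality}~(\ref{itemCor:zero_cost}) in turn. Reversibility of $\widehat{\cH}_2$ with respect to $\widehat{S}$ is inherited from that of $\cH_2$ since $\dd\widehat{S}$ has no $\aux$-component. The distinguished-function condition $\alpha\mapsto\widehat{\cH}_2((z,\aux),(\xi,r)+\alpha(0,1))$ constant is trivial because $\widehat{\cH}_2$ does not depend on $r$. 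The zero-cost flow of $\widehat{\cH}$ at $(\xi,r)=(0,0)$ has $z$-component $W(z)+\partial_\xi\cH_2(z,0)=\bfF(z)$ by hypothesis, and vanishing $\aux$-component.

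For part (b), assume Theorem~\ref{theorem:pre_GENERIC_iff_generalized_fluctuation_symmetry}~(\ref{itemThm:pregeneric}). Symmetry of $\widehat{\Psi}^*$ is inherited directly from that of $\Psi^*$. For the GENERIC identity \eqref{eqn:generalizedGENERIC} I would note that $\widehat{\bfF}-\widehat{L}\dd\widehat{E}=(\bfF-W,0)$, and observe that by Legendre duality $\widehat{\Psi}((z,\aux),(v,s))=\Psi(z,v)$ when $s=0$ and $+\infty$ otherwise; consequently the GENERIC left-hand side collapses exactly to the pre-GENERIC left-hand side \eqref{eqn:preGENERIC} for $(\bfF,S,\Psi,W)$, which vanishes by hypothesis. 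Apart from the Jacobi verification, the entire argument amounts to unfolding the definitions along the $Z\times\bR$ decomposition.
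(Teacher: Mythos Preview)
Your proposal is correct and follows essentially the same approach as the paper: all verifications reduce to unfolding the product structure $\widehat{Z}=Z\times\bR$, with the Jacobi identity being the only genuinely computational step. Your organization of that step via the commutator $[\partial_\aux,D_W]=0$ is a mild repackaging of the paper's appendix computation (which introduces $H_i=D_WF_i$, $G_{ij}=H_i\,\partial_\aux F_j$ and an auxiliary cyclic-cancellation lemma), but the underlying term-by-term cancellations are identical.
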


\begin{example}
Performing this extension on the pre-GENERIC system of Example~\ref{ex:pre-G}, we find 
\[
\begin{aligned}
\widehat E(z,\aux) &:= \aux, \quad & \widehat L &:= \widehat L(z,\aux) = \begin{pmatrix}0 & W\\-W & 0\end{pmatrix},\\
\widehat S(z,\aux) &:= S(z), \qquad & \widehat M &:= \widehat M(z,\aux) = \begin{pmatrix}M & 0\\0 & 0\end{pmatrix},
\end{aligned}
\]
and therefore
\[
\partial_t \begin{pmatrix} z\\\aux \end{pmatrix}
= \begin{pmatrix}0 & W\\-W & 0\end{pmatrix} \begin{pmatrix}0\\1\end{pmatrix}
+ \begin{pmatrix}M & 0\\0 & 0\end{pmatrix}\begin{pmatrix}-\tfrac12 \dd S \\0\end{pmatrix},
\]
or 
\begin{align*}
\dot z &= W(z) -\frac12 M \dd S(z),\\
\dot \aux &= 0.
\end{align*}
The degeneracy condition $\widehat M \dd \widehat E = 0$ is satisfied by construction, and since $\widehat L\dd \widehat S = (0,-\ip W{\dd S})$, the degeneracy condition $\widehat L\dd \widehat S=0$ is satisfied by the assumption that $\ip W{\dd S} = 0$.
\end{example}

As this example illustrates, the extension with an energy variable in this way is essentially trivial: no physics is added or assumed, and the only mathematical change is a change of form. This insight raises an interesting question: Does GENERIC have any significant structure in addition to the properties of pre-GENERIC? If pre-GENERIC can be `trivially' converted into GENERIC, is there actually any difference between the two? Whatever the answer to this question, Theorem~\ref{theorem:pre_GENERIC_to_GENERIC} provides a second, independent pointer to pre-GENERIC as a relevant variational structure, in addition to the appearance of pre-GENERIC in large-deviation principles (see Proposition~\ref{proposition:pre-GENERIC_from_LDP}).

\begin{proof}[Proof of Theorem \ref{theorem:pre_GENERIC_to_GENERIC}]
	The relationship between $\widehat{\Psi}^*$ and $\widehat{\cH}_2$ is immediate. The map $\widehat{L}$ is anti-symmetric since 
	\begin{multline*}
	\left\langle{\begin{pmatrix}
		\xi_1 \\ r_1
		\end{pmatrix}}{ \widehat{L}(z,\aux) \begin{pmatrix}
		\xi_2 \\ r_2
		\end{pmatrix}}\right\rangle
	= \left\langle{\begin{pmatrix}
		\xi_1 \\ r_1
		\end{pmatrix}}{\begin{pmatrix}
		r_2 W(z) \\ -\ip{W(z)}{\xi_2}
		\end{pmatrix}}\right\rangle  \\
	= r_2 \ip{W(z)}{\xi_1} - r_1 \ip{W(z)}{\xi_2} = - \left\langle{\begin{pmatrix}
		\xi_2 \\ r_2
		\end{pmatrix}}{ \widehat{L}(z,\aux) \begin{pmatrix}
		\xi_1 \\ r_1
		\end{pmatrix}}\right\rangle.
	\end{multline*}
	The tedious proof that $\widehat{L}$ satisfies the Jacobi identity is postponed to the appendix and follows from Proposition \ref{proposition:Jacobi_pre_GENERIC_to_GENERIC}. We verify $\widehat{L} \dd \widehat{S}= 0$ by a simple calculation:
	\begin{equation*}
	\widehat{L}(z,\aux) (\dd \widehat{S}(z,\aux)) = \widehat{L}(z,\aux) \begin{pmatrix}
	\dd S(z) \\ 0
	\end{pmatrix} 
	=
	\begin{pmatrix}
	0 \\ \ip{W(z)}{\dd S(z)}
	\end{pmatrix}
	= 0.
	\end{equation*}

	We proceed with the proof of (a). First, we find
	\begin{equation*}
	\widehat{L}(z,\aux) \dd \widehat{E}(z,\aux) = \begin{pmatrix}
	1\cdot W(z) \\ - \ip{W(z)}{0}
	\end{pmatrix} = \begin{pmatrix}
	W(z) \\ 0
	\end{pmatrix},
	\end{equation*}
	establishing that $\widehat{\cH}_1$ is given by a linear paring with $\widehat{L}\dd \widehat{E}$. Clearly $\widehat{\cH}_2$ is reversible for $\widehat{S}$, due to the same property for $\cH_2$ and $S$. As $\dd \widehat{E}$ only gives a contribution in the second coordinate, and $\widehat{\cH}_2$ only depends on the first coordinates, we find that $\alpha \mapsto \widehat{\cH}_2\left((z,\aux),(\xi,r) + \alpha \dd \widehat{E}(z,\aux)\right)$ is constant.

	\smallskip
	
	Finally, we prove (b). Note that \eqref{eqn:generalizedGENERIC} is satisfied for $\widehat{S},\widehat{E}$, dissipation potentials $(\widehat{\Psi},\widehat{\Psi}^*)$ and operators $\widehat{L}$, due to the fact that $\widehat{L} \dd \widehat{E} = W$, $(\widehat{\Psi},\widehat{\Psi}^*)$ act as $(\Psi,\Psi^*)$ only on the first variable, $\dd\widehat{S}(z,\aux) = (\dd S(z),0)$ and the pre-GENERIC property of $\bfF(z)$.
	By definition of $\widehat{E}$ and $\widehat{\Psi}^*$, we find
	\begin{equation*}
	\widehat{\Psi}^*((z,\aux), (\xi,r) + \alpha \dd \widehat{E}(z,\aux)) 
	= \widehat{\Psi}^*((z,\aux), (\xi,r + \alpha)) = \Psi^*(z,\xi) ,
	\end{equation*}
	which is constant in $\alpha$.
\end{proof}

\begin{remark}
The addition of a trivial energy variable to the dynamics can also be carried out for a microscopic system, for instance in the setting of Proposition \ref{proposition:pre-GENERIC_from_LDP}, leading to the following Corollary.

\begin{corollary} \label{corollary:trivial_embedding_processes_pre_GENERIC_to_GENERIC}
	
	Consider the setup in Proposition \ref{proposition:pre-GENERIC_from_LDP} and consider the dynamics $(X_n(t),0)$ and energy $\widehat{E}(z,\aux) = \aux$. 
	
	The processes $(X_n(t),0)$ preserve $\widehat{E}$ and the limiting dynamics $(\bfF,0)$ are GENERIC with energy $\widehat{E}$, entropy $\widehat{S}$, operators $\widehat{L}$ and dissipation potentials $(\widehat{\Psi},\widehat{\Psi}^*)$ as in Theorem \ref{theorem:pre_GENERIC_to_GENERIC}.
\end{corollary}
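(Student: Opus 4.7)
[Proof proposal for Corollary \ref{corollary:trivial_embedding_processes_pre_GENERIC_to_GENERIC}]
The plan is to combine the two preceding results: Proposition \ref{proposition:pre-GENERIC_from_LDP} delivers the pre-GENERIC structure of the limiting dynamics of $X_n$, and Theorem \ref{theorem:pre_GENERIC_to_GENERIC} then promotes this pre-GENERIC structure to a full GENERIC structure after the trivial enlargement of state space by the energy variable $\aux$. Conservation of $\widehat{E}$ along the augmented process is essentially by construction, since the second coordinate of $(X_n(t),0)$ is identically zero.

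More concretely, the first step is to observe that the enlarged process $\widehat{X}_n(t) := (X_n(t),0)$ takes values in $\widehat{Z} = Z \times \bR$, and that $\widehat{E}(\widehat{X}_n(t)) = 0$ for all $t \geq 0$, so $\widehat{E}$ is preserved pathwise. Next, since the second coordinate is deterministically constant, the generator of $\widehat{X}_n$ acts as $A_n$ on functions depending only on the first coordinate, and path-space large deviations with speed $n$ are inherited from those of $X_n$ by the contraction principle applied to the trivial map $z \mapsto (z,0)$. Consequently the limiting zero-cost flow of $\widehat{X}_n$ is exactly $\widehat{\bfF} = (\bfF,0)$.

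The second step is to verify the GENERIC structure of $\widehat{\bfF}$. By Proposition \ref{proposition:pre-GENERIC_from_LDP}, the limiting dynamics $\bfF$ of the processes $Z_n$ (and in particular of the reduced limit coming from $X_n$ in the present setup) is pre-GENERIC for $S$, vector field $W$, and dissipation potentials $(\Psi,\Psi^*)$. Thus assertion (\ref{itemThm:pregeneric}) of Theorem \ref{theorem:pre_GENERIC_iff_generalized_fluctuation_symmetry} holds for $\Psi^*$, $S$ and $\bfF$. Applying part (b) of Theorem \ref{theorem:pre_GENERIC_to_GENERIC} then yields that assertion (\ref{itemCor:generic}) of Corollary \ref{corollary:GENERIC_iff_generalized_fluctuation_symmetry_orthogonality} holds for $\widehat{\Psi}^*$, $\widehat{S}$, $\widehat{E}$, $\widehat{L}$ and $\widehat{\bfF}$, i.e.\ $\widehat{\bfF}$ is GENERIC with the stated data.

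There is no substantial obstacle: the only point requiring a little care is that Proposition \ref{proposition:pre-GENERIC_from_LDP} and its hypotheses (in particular the decomposition $H = H_1 + H_2$, the reversibility of $\pi_n$ for $A_{2,n}$, and the orthogonality $\ip{W}{\dd S} = 0$ derived there) are indeed already available for $X_n$ in the setup being invoked, so that the pre-GENERIC conclusion feeds directly into Theorem \ref{theorem:pre_GENERIC_to_GENERIC}. Once this compatibility is noted, both assertions of the corollary follow by concatenation of the two previous results.
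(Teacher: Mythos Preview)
Your proposal is correct and follows exactly the same route as the paper: the paper's own proof is the single line ``This result is immediate from Proposition \ref{proposition:pre-GENERIC_from_LDP} and Theorem \ref{theorem:pre_GENERIC_to_GENERIC},'' and you have simply unpacked that concatenation in detail. The extra remarks you make (pathwise conservation of $\widehat{E}$ because the second coordinate is constant, and inheritance of the large-deviation structure under the trivial embedding) are reasonable elaborations but not additional ideas.
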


\begin{proof}
	This result is immediate from Proposition \ref{proposition:pre-GENERIC_from_LDP} and Theorem \ref{theorem:pre_GENERIC_to_GENERIC}.
\end{proof}
\end{remark}

\subsection{Transforming GENERIC in the setting with a natural energy} \label{section:change_of_variables_GENERIC}

In Section \ref{section:final_example_AT_and_VFP} below, we will see that the underdamped Fokker-Planck equation and the Andersen thermostat are pre-GENERIC system with a vector field $W$ that can be written as $L(z) \dd E(z)$ for some physical energy $E$. In this situation the choice of Theorem~\ref{theorem:pre_GENERIC_to_GENERIC} is less natural; one would want to form a GENERIC structure in which $E(z)$ plays a role as the energy. 

However, in the examples in Section~\ref{section:final_example_AT_and_VFP} the energy $E$ is not conserved by the flow. At a microscopic level, this is the consequence of energy flowing in and out of a heat bath; this heat bath is implicitly present as the generator of the noise in the system. Therefore the natural conserved energy should be of the form $E(z) + e$, where the new variable $e$ represents the energy of the heat bath. 
In~\cite{DPZ13} it was shown that this way of extending the system indeed leads to a GENERIC structure for the extended system, and here we generalize the ideas to the nonlinear dissipative structures of this paper. 
%

\medskip
To explain the choices below we first discuss the running example. 
\begin{example}
Again starting from the pre-GENERIC example of Example~\ref{ex:pre-G}, 
we now assume that $W = L\dd E$ for some antisymmetric operator $L$ satisfying the Jacobi identity and some function $E \in C^1(Z)$. Adding a variable $e$ that describes the energy of the heat bath, as discussed above, we calculate that 
\[
0 = \partial_t (E(z) + e) = \ip{\dd E}{\dot z} + \dot e 
= \ip{\dd E}{L \dd E - \tfrac12 M\dd S} + \dot e
=  - \tfrac12\ip{\dd E}{ M\dd S} + \dot e.
\]
Therefore choosing the natural evolution $\dot e = \tfrac12\ip{\dd E}{ M\dd S}$ we are led to the system
\begin{subequations}
\label{eq:Ex-C}
\begin{align}
\dot z &= LdE(z) - \tfrac12 M S(z),\\
\dot e &= \tfrac12\ip{\dd E}{ M\dd S}.
\end{align}
\end{subequations}

This system can be obtained from the system of Theorem~\ref{theorem:pre_GENERIC_to_GENERIC} by the transformation $\Phi: (z,a) \mapsto (z,a- E(z)) =: (z,e)$, which  replaces the full energy $a$ by the heat-bath energy $a-E(z) =: e$. This structure is given by $\widetilde E(z,e) = E(z) + e$ and $\widetilde S(z,e) = S(z)$, and 
\[
\partial_t \begin{pmatrix} z\\e \end{pmatrix}
= \underbrace{\begin{pmatrix}0 & L\dd E\\-L \dd E & 0\end{pmatrix}}_{\widetilde L} \underbrace{\begin{pmatrix}\dd E\\1\end{pmatrix}}_{\dd \widetilde E}
-\frac12  \underbrace{\begin{pmatrix}M & -MdE \\ \langle-M \dd E,\,\cdot\, \rangle  & \langle \dd E,M \dd E\rangle\end{pmatrix}}_{\widetilde M}
  \underbrace{\begin{pmatrix}\dd S\\0\end{pmatrix}}_{ \dd \widetilde S}.
\]
A next natural step would be to replace the first term above using the calculation
\[
\begin{pmatrix}0 & L\dd E\\-L \dd E & 0\end{pmatrix}{\begin{pmatrix}\dd E\\1\end{pmatrix}}
= \begin{pmatrix} L\dd E\\0 \end{pmatrix}
= \begin{pmatrix} L & 0 \\ 0 & 0\end{pmatrix}
\begin{pmatrix} \dd E\\ 1\end{pmatrix},
\]
since the final product reflects more naturally the structure of the original equation for $z$. 
From the pre-GENERIC structure, however, we need not have $L\dd S = 0$, implying that a structure of this type need not satisfy the degeneracy conditions. In fact, in the examples of Section~\ref{section:final_example_AT_and_VFP} we will have $L\dd S\not=0$ but $L(\dd S - \dd E) = 0$. Note that this is compatible with $\ip{W}{\dd S} = 0$, since by the antisymmetry of $L$,
\[
\ip W{\dd S} = \ip{L \dd E}{\dd S} = \ip{L (\dd E-\dd S)}{\dd S} =0.
\]

To regain the degeneracy conditions we therefore redefine the new entropy to be $\overline S(z,e) = S(z) - E(z) - e$. The resulting system, 
\[
\partial_t \begin{pmatrix} z\\e \end{pmatrix}
= \underbrace{\begin{pmatrix}L & 0\\0  & 0\end{pmatrix}}_{\overline L} \underbrace{\begin{pmatrix}\dd E\\1\end{pmatrix}}_{\dd \overline E}
-\frac12  \underbrace{\begin{pmatrix}M & -MdE \\ \langle-M \dd E,\,\cdot\, \rangle  & \langle \dd E,M \dd E\rangle\end{pmatrix}}_{\overline M}
  \underbrace{\begin{pmatrix}\dd S-\dd E \\1\end{pmatrix}}_{ \dd \overline S},
\]
satisfies all the requirements of the GENERIC structure, and gives~\eqref{eq:Ex-C} as evolution equation.

Summarizing, we first made the choice $e = a-E(z)$ for the desired additional variable.  While there is a perfectly admissible GENERIC structure for the variable $(z,e)$, we also desired that the Poisson operator be of the form $\begin{pmatrix} L&0\\0&0\end{pmatrix}$. In the examples of Section~\ref{section:final_example_AT_and_VFP} we do not have $L\dd S = 0$, only $L(\dd S-\dd E)=0$, and to make this correspond to a degeneracy condition we translated the entropy by subtracting $E$.
\end{example} 

The next theorem describes this procedure for the nonlinear GENERIC structure of this paper, and proves the corresponding properties.

\begin{theorem} \label{theorem:pre_GENERIC_natural_variables}
	Let $S,E \in C^1(Z)$. Suppose $\bfF$ is pre-GENERIC for $S$ with dissipation potentials $(\Psi,\Psi^*)$ and vector field $W$. Suppose that $W(z) = L(z) \dd E(z)$, where
	\begin{enumerate}[(a)]
		\item $L$ are anti-symmetric operators $L(z) : T_z^*Z \rightarrow T_z Z$ that satisfy the Jacobi identity,
		\item $L(z) (\dd S(z) - \dd E(z)) = 0$.
	\end{enumerate}
	Define
	\begin{equation*}
	\overline{\Psi}^*\left(\begin{pmatrix}
	z \\ e
	\end{pmatrix}, \begin{pmatrix}
	\xi \\ r
	\end{pmatrix}\right) = \Psi^*(z,\xi -r \dd E(z)),
	\end{equation*}
	let $\overline{\Psi}$ be defined as the Legendre transform of $\overline{\Psi}^*$, set $\overline{S}(z,e) = S(z) - (E(z) + e)$, $\overline{E}(z,e) = E(z) + e$, and
	\begin{equation*}
	\overline{L}(z,e) \begin{pmatrix}
	\xi \\ r
	\end{pmatrix} = \begin{pmatrix}
	L(z) \xi \\ 0
	\end{pmatrix}.
	\end{equation*}
	Then the flow $\overline{\bfF}(z,e) = (\bfF(z),- \ip{\dd E(z)}{\bfF(z)})$ is GENERIC for $\overline{S},\overline{E}$ with dissipation potentials $(\overline{\Psi},\overline{\Psi}^*)$ and operators $\overline{L}$.
\end{theorem}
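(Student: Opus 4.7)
The plan is to verify, one by one, the conditions of Definition~\ref{definition:GENERIC_original_generalized} for the barred objects, and then reduce the GENERIC identity~\eqref{eqn:generalizedGENERIC} for $\overline\bfF$ to the pre-GENERIC identity~\eqref{eqn:preGENERIC} for $\bfF$.

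First I would check the structural ingredients. The symmetry, non-negativity and value at $0$ of $\overline\Psi^*$ follow from those of $\Psi^*$, since $\overline\Psi^*((z,e),-(\xi,r)) = \Psi^*(z,-(\xi-r\dd E)) = \Psi^*(z,\xi-r\dd E)$, while convexity is inherited. Note however that $\overline\Psi^*$ is not strictly convex---it is constant along the rays $(\xi,r)\mapsto(\xi+\alpha\dd E(z),r+\alpha)$---so its Legendre dual $\overline\Psi$ is extended-real-valued, equal to $\Psi(z,v)$ on the affine subspace $\{(v,w)\colon w=-\ip{v}{\dd E(z)}\}$ and $+\infty$ elsewhere. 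Antisymmetry of $\overline L$ follows from that of $L$, and the Jacobi identity reduces to the one for $L$: since $\overline L \dd \overline G = (L \partial_z \overline G, 0)$, the induced bracket is the $L$-bracket applied pointwise in $e$, so the Jacobi identity transfers.

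Next I would check the degeneracy conditions. With $\dd\overline S(z,e) = (\dd S(z)-\dd E(z),-1)$, one computes $\overline L \dd \overline S = (L(\dd S-\dd E),0) = 0$ by hypothesis (b). With $\dd\overline E = (\dd E,1)$, one finds
\[
\overline\Psi^*\bigl((z,e),(\xi,r) + \alpha(\dd E,1)\bigr) = \Psi^*\bigl(z,\xi + \alpha \dd E - (r+\alpha)\dd E\bigr) = \Psi^*(z,\xi-r\dd E),
\]
which is independent of $\alpha$.

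Finally I would verify~\eqref{eqn:generalizedGENERIC} by direct computation. Since $\overline L\dd\overline E = (L\dd E,0)=(W,0)$ and $\ip{\dd E}{W} = \ip{\dd E}{L\dd E}=0$ by antisymmetry of $L$, the vector $\overline\bfF - \overline L\dd\overline E = (\bfF-W,\,-\ip{\dd E}{\bfF-W})$ lies in the finite domain of $\overline\Psi$, giving $\overline\Psi((z,e),\overline\bfF-\overline L \dd\overline E) = \Psi(z,\bfF-W)$. A short calculation yields $\overline\Psi^*((z,e),-\tfrac12 \dd\overline S) = \Psi^*(z,-\tfrac12 \dd S)$ and $\ip{\overline\bfF}{\dd\overline S} = \ip{\bfF}{\dd S}$, so summing these three terms reproduces exactly the pre-GENERIC identity for $\bfF$, which vanishes by hypothesis. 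The main obstacle---beyond careful bookkeeping---is the loss of strict convexity of $\overline\Psi^*$, which forces $\overline\Psi$ to be finite only on a codimension-one subspace; the definition of $\overline\bfF$ is tailored so that this subspace is precisely where the extended dynamics live, which is also the reason why $\overline E$ is conserved along the flow.
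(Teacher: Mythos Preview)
Your proof is correct and complete. It differs from the paper's argument in that the paper takes a more indirect route: it first invokes Theorem~\ref{theorem:pre_GENERIC_to_GENERIC} to obtain the ``hat'' GENERIC structure with the trivial energy variable, then pushes this forward under the coordinate change $\Phi(z,e)=(z,e-E(z))$ to obtain an intermediate ``tilde'' GENERIC structure, and finally modifies the entropy (subtracting $\widetilde E$) and the Poisson operator simultaneously, checking that the GENERIC identity~\eqref{eqn:generalizedGENERIC} is preserved term-by-term against the tilde version. Your approach bypasses both intermediate structures entirely and verifies all the GENERIC conditions for the barred objects directly against the pre-GENERIC hypothesis on $\bfF$. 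This is shorter and more self-contained; the paper's route, on the other hand, makes explicit \emph{why} the barred objects take the form they do---they arise as the image of the trivial extension under a natural change of variables, followed by an entropy shift dictated by the failure of $L\dd S=0$. Your explicit computation of $\overline\Psi$ as $\Psi(z,v)$ on the hyperplane $w=-\ip{v}{\dd E(z)}$ and $+\infty$ off it is a detail the paper leaves implicit, and your observation that $\overline\Psi^*$ loses strict convexity (so the pair $(\overline\Psi,\overline\Psi^*)$ falls outside Definition~\ref{def:dissipation_potentials} as stated) is a genuine point that the paper does not address.
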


\begin{remark}
	The transition from `unbarred' to `barred' objects in Theorem~\ref{theorem:pre_GENERIC_natural_variables} can be extended to obtain a continuous scale of interpolating GENERIC systems. Let $\alpha \in [0,1]$ and define $E_\alpha(z,e) = e + E(z)$, $S_\alpha(z,e) = S(z) -\alpha (e + E(z))$, $\Psi^*_\alpha((z,e),(\xi,r)) = \Psi^*(z,\xi -r \dd E(z))$, and
	\begin{equation*}
	L_\alpha(z,e) = \begin{pmatrix}
	\alpha L(z) & (1-\alpha) W(z) \\ - (1-\alpha) W(z) & 0
	\end{pmatrix}.
	\end{equation*}
	Then $\overline{\bfF}(z,e)$ is GENERIC for $S_\alpha,E_\alpha$ with dissipation potentials $(\Psi_\alpha,\Psi^*_\alpha)$ and operators $L_\alpha$.
\end{remark}

\begin{proof}[Proof of Theorem \ref{theorem:pre_GENERIC_natural_variables}]
	
	The proof consists of two steps. We start with the GENERIC representation of Theorem \ref{theorem:pre_GENERIC_to_GENERIC}, after which we make a coordinate change in the energy variable. This gives us GENERIC in a new set of variables for a changed energy and a changed dissipation potential. Then we make a second step, simultaneously changing the entropy functional and the anti-symmetric operator.
	
	\bigskip
	
	\textit{Step 1.} Denote $\widetilde{Z} = Z \times \bR$. Let $\Phi : \widehat{Z} \rightarrow \widetilde{Z}$ be defined as $\Phi(z,e) = (z,e - E(z))$. Then $D\Phi : T\widehat{Z} \rightarrow T\widetilde{Z}$ and $(D\Phi)^* : T^* \widetilde{Z} \rightarrow T^* \widehat{Z}$ and
	\begin{equation}
	\begin{aligned}
	D\Phi \left((z,e), \begin{pmatrix}
	v \\ v_e
	\end{pmatrix}\right) & = \left((z,e-E(z)), \begin{pmatrix}
	v  \\ v_e - \ip{\dd E(z)}{v} \end{pmatrix}\right), \\
	(D\Phi)^* \left((z,e), \begin{pmatrix}
	\xi \\ r
	\end{pmatrix} \right) & = \left((z,e+E(z)), \begin{pmatrix}
	\xi - \dd E(z) \\  r
	\end{pmatrix}\right).
	\end{aligned}
	\end{equation}
	
	We obtain dynamics on $\widetilde{Z}$ by pushing forward the GENERIC dynamics on $\widehat{Z}$ via the map~$\Phi$. Thus, all operators change under the action of $\Phi$, and we obtain that the flow
	\begin{equation} \label{eqn:extend_energy_flow}
	\partial_t \begin{pmatrix}
	z(t) \\ e(t)
	\end{pmatrix} = \begin{pmatrix}\mathbf{F}(z(t)) \\
	- \ip{\dd E(z(t)}{\mathbf{F}(z(t))}
	\end{pmatrix}
	\end{equation}
	is a GENERIC system in new coordinates with functionals
	\begin{align*}
	\widetilde{E}(z,e) & = \widehat{E} \circ \Phi^{-1}(z,e) = e + E(z), \\
	\widetilde{S}(z,e) & = \widehat{S}\circ \Phi^{-1}(z,e) = S(z), \\
	\widetilde{L}\left((z,e), \begin{pmatrix}
	\xi \\ r
	\end{pmatrix} \right) & = D \Phi \circ \widehat{L} \circ (D\Phi)^*\left((z,e), \begin{pmatrix}
	\xi \\ r
	\end{pmatrix} \right) = \left((z,e), \begin{pmatrix}
	0 & W(z) \\ - W(z) & 0
	\end{pmatrix} \begin{pmatrix}
	\xi \\ r
	\end{pmatrix}\right)
	\end{align*}
	and
	\begin{multline*}
	\widetilde{\Psi}^*\left((z,e), \begin{pmatrix}
	\xi \\ r
	\end{pmatrix}\right) = \widehat{\Psi}^*\left((D\Phi)^*\left((z,e), \begin{pmatrix}
	\xi \\ r
	\end{pmatrix} \right)\right) \\
	= \widehat{\Psi}^*\left((z,e+E(z)),\begin{pmatrix}
	\xi - r \dd E(z) \\ r
	\end{pmatrix}\right) = \Psi^*(z,\xi -r \dd E(z)).
	\end{multline*}
	
	\textit{Step 2.}
	Set $\overline{E} = \widetilde{E}$, $\overline{\Psi}^* := \widetilde{\Psi}^*$ and $\overline{\Psi} = \widetilde{\Psi}$. Next, we change both the entropy and the operator $L$ to obtain the desired GENERIC representation. Set $\overline{S} = \widetilde{S} - \widetilde{E}$ and 
	\begin{equation*}
	\overline{L}=  \begin{pmatrix}
	L &  0 \\ 0  & 0
	\end{pmatrix}.
	\end{equation*}
	We prove that \eqref{eqn:extend_energy_flow} is also GENERIC for $(\overline{S},\overline{E}, \overline{\Psi},\overline{\Psi}^*)$.
	
	First of all, note that $\overline{L}$ is anti-symmetric and satisfies the Jacobi identity because $L$ does. Thus, we need to show that the equation in \eqref{eqn:generalizedGENERIC} for the GENERIC representation in step~1, implies the same equation for our new representation. In addition, we need to check the non-degeneracy condition, \eqref{eqn:extended_orthogonality}. We start with the first part of the latter. We have
	\begin{equation*}
	\overline{L} \dd \overline{S} = \overline{L} (\dd \widetilde{S} - \dd \widetilde{E}) = \begin{pmatrix}
	L &  0 \\0  & 0
	\end{pmatrix} \begin{pmatrix}
	\dd S(z) - \dd E(z) \\ 1
	\end{pmatrix} = 0
	\end{equation*}
	by assumption (b). For the second part of \eqref{eqn:extended_orthogonality}note that $\widetilde{\Psi}^* = \overline{\Psi}^*$ is constant by addition of $\widetilde{E}$ in the momentum variable by the GENERIC property of the system obtained in step 1.
	
	\smallskip
	
	We proceed with the verification of \eqref{eqn:generalizedGENERIC} for our new representation by comparing the contributions of the old and new representation. In particular, we prove that
	\begin{align}
	\overline{\Psi}(z,\dot{z} - \overline{L}(z) \dd \overline{E}(z)) & = \widetilde{\Psi}(z,\dot{z} - \widetilde{L}(z) \dd \widetilde{E}(z)), \label{eqn:GENERIC_same_1} \\
	\overline{\Psi}^*\left(z, - \tfrac{1}{2} \dd \overline{S}(z)\right) & = \widetilde{\Psi}^*\left(z, - \tfrac{1}{2} \dd \widetilde{S}(z)\right), \label{eqn:GENERIC_same_2} \\
	\frac{1}{2}\ip{\dot{z}}{\dd \overline{S}(z)} & = \frac{1}{2}\ip{\dot{z}}{\dd \widetilde{S}(z)}. \label{eqn:GENERIC_same_3}
	\end{align}
	For \eqref{eqn:GENERIC_same_1} note that $\overline{\Psi} = \widetilde{\Psi}$, $\widetilde{E} = \overline{E}$ and $\overline{L} \dd \overline{E} = \widetilde{L} \dd \widetilde{E}$.  \eqref{eqn:GENERIC_same_2} follows because $\overline{\Psi}^* = \widetilde{\Psi}^*$ and the property of $\alpha \mapsto \widetilde{\Psi}^*((z,e), (\xi,r) + \alpha \dd \widetilde{E}(z,e))$ being constant. The final equality \eqref{eqn:GENERIC_same_3} follows from the fact that $\widetilde{E}$ is constant under the flow by the GENERIC property for $(\widetilde{S},\widetilde{E},\widetilde{\Psi},\widetilde{\Psi}^*)$.

	%
	%
	%
	%
	%
	%
	%
	%
	%
	
	%
	%
\end{proof}

\section{Two related examples: the Andersen thermostat and the Vlasov-Fokker-Planck equation} \label{section:final_example_AT_and_VFP}

\subsection{The macroscopic dynamics  and pre-GENERIC}

In this final section, we work out the main idea's in this paper for the the Andersen thermostat and the Vlasov-Fokker-Planck equation. In both cases, the evolution equation that we are considering is given by  $\dd_t \rho = \mathbf{F}(\rho)$ with
\begin{equation} \label{eqn:underdamped_dynamics_general}
\mathbf{F}(\rho) = - \dv_q\left(\rho \frac{p}{m}\right) + \dv_p\left( \rho \nabla_q V + \rho (\nabla_q \Phi \ast \rho)\right) + \cG^*(\rho).
\end{equation}
Here $\rho \in \cP(\bR^{2d})$ where $\bR^{2d}$ represents the space of pairs $(q_1,\dots,q_d,p_1,\dots,p_d)$ of position and momentum in $\bR^d$. We implicitly think of this vector field being defined on the tangent bundle $T \cP(\bR^{2d})$ with $T_\rho \cP(\bR^{2d}) = H^{-1}(\rho)$. The subscripts $p,q$ in $\dv_p,\dv_q,\Delta_p$ mean that the operators act only on the corresponding variable. $V,\Phi : \bR^d \rightarrow \bR^d$ are two potentials that grow sufficiently fast at infinity and we assume that $\Phi$ is symmetric. The convolution is given by
\begin{equation*}
\left(\Phi \ast \rho\right)(q) := \int \Phi(q - q') \rho(\dd q', \dd p')
\end{equation*}
and is a function of $q$ only. The dynamics $\cG^*$ is given by the adjoint of an operator $\cG$ that can take various forms. Two main examples are given by:
\begin{description}
	\item[Andersen Thermostat] redistribution of the momentum via a jump-processes with a Maxwellian distribution
	\begin{equation}
	\label{def:G-Andersen}
	\cG f(q,p) := \gamma \int \left[f(q,p') - f(q,p) \right] \frac{1}{\sqrt{2 \pi m}} e^{- \frac{p^2}{2 m}} \, \dd p,
	\end{equation}
	\item[Vlasov-Fokker-Planck] a diffusion process modeling stochastic forcing with friction
	\begin{equation}
	\label{def:G-VFP}
	\cG f(q,p) := - \gamma \left\langle{\frac{p}{m}}{ \nabla_p f(q,p)}\right\rangle  + \gamma \Delta_p f(q,p).
	\end{equation}
\end{description}


In both cases $m > 0$ models the mass of the particles, whereas $\gamma$ models the intensity of the interactions with a background solvent. In general, we can consider more general $\cG$, e.g. certain types of L\'{e}vy processes. We first work out these two examples and come back to general conditions under which the analysis can be carried out in Condition \ref{condition:reversible_wrt_gaussian} below.

At this point, it is important to note that the operator $\cG$ acts on the momentum variable only and is independent of the value of $q$, i.e. if $f$ depends on $p$ only, then $\cG f(q,p)$ depends on~$p$ only. This assumption is natural from a physical point of view as $\cG$ models the interaction between our particles and a background solvent.

In addition, the operator $\cG$ interpreted as acting on $p$ only is reversible with respect to a centered Gaussian measure
\begin{equation*}
\rho_m(\dd p) = \frac{1}{\sqrt{2\pi m}} e^{-\frac{p^2}{2m}}\dd p,
\end{equation*}
with $m > 0$. 

\smallskip

We introduce some additional structure to study the macroscopic equations in relation to pre-GENERIC and GENERIC. First of all, define $S,E : \cP(\bR^{2d}) \rightarrow \bR$ by
\begin{align}
E(\rho) & := \int \frac{p^2}{2m} + V(q) + \frac{1}{2} (\Phi \ast \rho)(q) \, \rho(\dd q, \dd p), \label{eqn:example_enegy}\\
S(\rho) & := \int \log \rho(q,p) \, \rho(\dd q, \dd p) + E(\rho). \label{eqn:example_entropy}
\end{align}
which have functional derivatives
\begin{align}
\dd E(\rho) (q,p) & = \frac{p^2}{2m} + V(q) + (\Phi \ast \rho)(q), \\
\dd S(\rho) (q,p) & =  \log \rho(q,p) + 1 + \dd E(\rho).
\end{align}
Next, let $J$ be the $2d$-dimensional symplectic matrix
\begin{equation*}
J = \begin{bmatrix}
0 & - \bONE_d \\
\bONE_d & 0
\end{bmatrix},
\end{equation*}
and define $L : T^*\cP(\bR^{2d}) \rightarrow T\cP(\bR^{2d})$ by
\begin{equation*}
L(\rho) \xi := \dv\left(\rho J \nabla \xi \right).
\end{equation*}
Note that $L$ inherits the symplectic structure of $J$, and is therefore a family of anti-symmetric operators satisfying the Jacobi identity.

\begin{theorem} \label{theorem:AT_and_VFP_are_preGENERIC}
	The Andersen thermostat and the Vasov-Fokker-Planck equation are pre-GENERIC with entropy $S$, vector field $W(\rho) := L(\rho) \dd E(\rho)$ and dissipation potentials
	\begin{align*}
	\Psi^*(\rho,\xi) & := \gamma \int \int \left[ \cosh\left( \xi(q,p') -  \xi(q,p)\right) - 1 \right] \frac{1}{\sqrt{2 \pi m}} e^{- \frac{(p^2 + (p')^2}{4 m}} \sqrt{\rho(q,p')\rho(q,p)} \, \dd p' \,\dd p \, \dd q, \\
	\Psi^*(\rho,\xi) & := \gamma \vn{\xi}_{H^1_p(\rho)}^2 = \gamma \int \left| \nabla_p \xi(q,p) \right|^2 \dd \rho(q,p),
	\end{align*}
	respectively.
\end{theorem}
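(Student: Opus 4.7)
The plan is to verify the three requirements of Definition \ref{definition:pre_GENERIC}: symmetry and convexity of $\Psi^*$, the non-degeneracy condition $\ip{W(\rho)}{\dd S(\rho)} = 0$, and the pre-GENERIC equation itself. By Legendre duality the last requirement reduces to showing that
\[
\mathbf{F}(\rho) - W(\rho) \;=\; \partial_\xi \Psi^*\bigl(\rho,-\tfrac{1}{2}\dd S(\rho)\bigr).
\]
A direct computation gives $L(\rho)\dd E(\rho) = \dv(\rho J \nabla \dd E(\rho))$ with $\nabla \dd E(\rho) = (\nabla_q V + \nabla_q \Phi \ast \rho,\, p/m)$, which recovers the first two terms of \eqref{eqn:underdamped_dynamics_general}. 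Thus $W(\rho)$ accounts exactly for the Hamiltonian part and the problem reduces to identifying $\cG^*(\rho)$ with $\partial_\xi \Psi^*(\rho,-\tfrac{1}{2}\dd S(\rho))$ in each of the two cases.

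For the non-degeneracy condition, observe that $\dd S(\rho) - \dd E(\rho) = \log \rho + 1$, so that
\[
L(\rho)(\dd S - \dd E) \;=\; \dv\bigl(\rho J \nabla \log \rho\bigr) \;=\; \dv(J \nabla \rho) \;=\; J_{ij}\partial_i\partial_j \rho \;=\; 0
\]
since $J$ is a constant antisymmetric matrix. Consequently, by antisymmetry of $L$, $\ip{W(\rho)}{\dd S(\rho)} = \ip{L \dd E}{\dd S} = -\ip{\dd E}{L \dd S} = -\ip{\dd E}{L(\dd S - \dd E)} = 0$. Symmetry and convexity of $\Psi^*$ are immediate: the VFP potential is a non-negative quadratic form, while the Andersen potential is convex as an integral of $\cosh - 1$ of a linear functional of $\xi$, and symmetric because $\cosh$ is even.

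For the VFP case, varying $\Psi^*$ yields $\partial_\xi \Psi^*(\rho,\xi) = -2\gamma\,\dv_p(\rho \nabla_p \xi)$. Substituting $\xi = -\tfrac{1}{2}\dd S(\rho)$ and using $\rho \nabla_p \log \rho = \nabla_p \rho$ produces $\gamma \Delta_p \rho + \gamma \dv_p(p\rho/m)$, which coincides with $\cG^*(\rho)$ as computed from \eqref{def:G-VFP}; this is essentially the calculation of \cite{DPZ13}. For the Andersen thermostat, introduce the abbreviation $a(q,p) := \rho(q,p) e^{p^2/(2m)}$, so that $\xi(q,p') - \xi(q,p) = -\tfrac{1}{2}\log\bigl(a(q,p')/a(q,p)\bigr)$ when $\xi = -\tfrac{1}{2}\dd S(\rho)$. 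Using $\sinh\bigl(-\tfrac{1}{2}\log(b/a)\bigr) = \tfrac{1}{2}\bigl(\sqrt{a/b} - \sqrt{b/a}\bigr)$ one obtains the key identity
\[
\sinh\!\bigl(\xi(q,p')-\xi(q,p)\bigr)\sqrt{\rho(q,p)\rho(q,p')}\,e^{-(p^2+p'^2)/(4m)} = \tfrac{1}{2}\bigl(\rho(q,p)e^{-p'^2/(2m)} - \rho(q,p')e^{-p^2/(2m)}\bigr).
\]
Varying $\Psi^*$ in $\xi$ produces a $\sinh$ factor paired with $\eta(q,p') - \eta(q,p)$; applying the identity above and combining the four resulting terms under the symmetry $p \leftrightarrow p'$ gives
\[
\partial_\xi \Psi^*\bigl(\rho,-\tfrac{1}{2}\dd S(\rho)\bigr) = \gamma\Bigl(\tfrac{1}{\sqrt{2\pi m}}\,e^{-p^2/(2m)}\,\rho_q(q) - \rho(q,p)\Bigr),
\]
where $\rho_q(q) := \int \rho(q,p)\,\dd p$. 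A direct computation of $\cG^*\rho$ from \eqref{def:G-Andersen} yields exactly the same expression.

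The main obstacle is the Andersen case: the $\sinh$ identity above is the non-quadratic analogue of the detailed-balance cancellation used in the VFP calculation, and making the shifted variational derivative of the $\cosh$-type functional collapse to the linear jump operator $\cG^*$ requires careful bookkeeping of the double integral under the $p \leftrightarrow p'$ symmetry. Once this identification is in hand, the three conditions of Definition \ref{definition:pre_GENERIC} assemble immediately and the proof is complete.
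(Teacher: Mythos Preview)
Your proof is correct and follows essentially the same route as the paper: verify symmetry of $\Psi^*$, derive $\ip{W}{\dd S}=0$ from $L(\dd S-\dd E)=0$ and antisymmetry of $L$, and reduce the pre-GENERIC equation to $\partial_\xi\Psi^*(\rho,-\tfrac12\dd S(\rho))=\cG^*(\rho)$, checked by direct computation in each case. Your organization of the Andersen calculation via the identity $\sinh\bigl(-\tfrac12\log(b/a)\bigr)=\tfrac12(\sqrt{a/b}-\sqrt{b/a})$ is a slightly cleaner packaging of what the paper calls ``expanding the hyperbolic sine,'' but the substance is the same.
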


Before giving the proof of the result, we start with some preliminary properties of the drift and the operators $L$ that are needed at various points in this section.

\begin{lemma} \label{lemma:example_representation_of_drift_and_orthogonality}
	We have
	\begin{enumerate}[(a)]
		\item
		\begin{equation*}
		W(\rho) := L(\rho)\dd E(\rho) = - \dv_q\left(\rho \frac{p}{m}\right) + \dv_p\left( \rho \nabla_q V + \rho (\nabla_q \Phi \ast \rho)\right),
		\end{equation*}
		\item 
			\begin{equation*}
		L(\rho) (\dd S(\rho) - \dd E(\rho)) = 0.
		\end{equation*}
	\end{enumerate}
\end{lemma}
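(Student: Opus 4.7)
For part (a), the strategy is simply to unwind the definitions and compute. First I would write out $\dd E(\rho)$ explicitly from \eqref{eqn:example_enegy} and take its gradient on $\bR^{2d}$:
\[
\nabla \dd E(\rho)(q,p) = \begin{pmatrix}\nabla_q V(q) + (\nabla_q \Phi \ast \rho)(q) \\ p/m\end{pmatrix}.
\]
Multiplying by $J$ swaps the two components with the appropriate sign, so
\[
\rho\, J \nabla \dd E(\rho) = \begin{pmatrix} -\rho\, p/m \\ \rho\bigl(\nabla_q V + \nabla_q \Phi \ast \rho\bigr)\end{pmatrix},
\]
and applying $\dv = (\dv_q,\dv_p)$ to this vector field gives exactly the right-hand side of (a). The only subtlety is keeping track of which variable the gradient of the convolution acts on; since $\Phi\ast\rho$ is a function of $q$ alone, $\nabla_p(\Phi\ast\rho)=0$ and this contributes only to the $q$-gradient, as used above.

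For part (b), the key observation is that the functional derivatives of $S$ and $E$ differ by a purely entropic term: from \eqref{eqn:example_entropy} and \eqref{eqn:example_enegy},
\[
\dd S(\rho) - \dd E(\rho) = \log \rho + 1.
\]
Then $\nabla\bigl(\log\rho + 1\bigr) = \nabla\rho/\rho$, so
\[
\rho\, J\nabla(\dd S(\rho) - \dd E(\rho)) = J \nabla \rho.
\]
Hence I need to verify $\dv(J\nabla \rho) = 0$, which is the standard fact that the divergence of the symplectic gradient vanishes: expanding, $\dv(J\nabla\rho) = -\dv_q(\nabla_p\rho) + \dv_p(\nabla_q\rho) = 0$ by equality of mixed partials.

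The computations are routine, so no real obstacle is expected; the only care needed is in writing the symplectic block structure and the convolution derivative correctly, and in justifying the use of mixed partials (which requires sufficient smoothness of $\rho$, consistent with the formal geometric setting adopted throughout this section).
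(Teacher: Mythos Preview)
Your proposal is correct and follows essentially the same approach as the paper: both proofs are direct computations that expand $L(\rho)\xi = \dv(\rho J\nabla\xi)$ on $\xi=\dd E(\rho)$ and $\xi=\dd S(\rho)-\dd E(\rho)=\log\rho+1$, with part (b) reducing to the vanishing of $\dv(J\nabla\rho)$ by equality of mixed partials. Your write-up is slightly more explicit about the intermediate vector $\rho J\nabla\dd E(\rho)$ and the role of the convolution, but the argument is identical in substance.
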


\begin{proof}
	Both claims follow by direct calculation:
	\begin{equation*}
	L(\rho)\dd E(\rho)  = \dv J \nabla \dd E(\rho) = \dv_p\left(\rho \nabla_q V(q) + (\Phi \ast \rho)(q) \right) - \dv_q \left(\rho \frac{p}{m} \right),
	\end{equation*}
	and
	\begin{multline*}
	L(\rho)(\dd S(\rho) - \dd E(\rho))  = \dv J \nabla \left( \log \rho \right)  = \dv_p\left(\rho \nabla_q \log \rho\right) - \dv_q\left(\rho \nabla_p \log \rho\right) \\
	= \dv_p\left(\nabla_q\rho\right) - \dv_q\left( \nabla_p \rho\right) = 0.
	\end{multline*}
\end{proof}

\begin{proof}[Proof of Theorem \ref{theorem:AT_and_VFP_are_preGENERIC}]
	We start by checking property (a) and (b) of Definition \ref{definition:pre_GENERIC}. Clearly, for both the Andersen thermostat and the Vlasov-Fokker-Planck equation the potential $\Psi^*$ is non-negative, symmetric and $\Psi^*(\rho,0) = 0$. By the properties of the Legendre transform the same follows for $\Psi$. Property (b) follows by using the anti-symmetry of $L$ twice and then Lemma \ref{lemma:example_representation_of_drift_and_orthogonality} (b) (we drop dependence on $\rho$):
		\begin{equation*}
		\ip{W}{\dd S} = \ip{L \dd E}{\dd S} = \ip{L \dd E}{\dd S - \dd E} = - \ip{\dd E}{L(\dd S - \dd E)} = 0.
		\end{equation*}
	Thus, we need to check \eqref{eqn:preGENERIC}. By the properties of the Legendre transform, it suffices to establish $\bfF(\rho) = \partial_{\xi} \Psi^*\left(\rho, - \frac{1}{2} \dd S(\rho)\right)+ W(\rho)$, or by Lemma \ref{lemma:example_representation_of_drift_and_orthogonality} (a), equivalently that $\partial_{\xi} \Psi^*\left(\rho, - \frac{1}{2} \dd S(\rho)\right) = \cG^*(\rho)$.

	We start with the calculation of $\partial_{\xi} \Psi^*(\rho,\xi)$, which is given by the property that for any $\eta$ in the cotangent space, we have
	\begin{equation*}
	\ip{\eta}{\partial_{\xi} \Psi^*(\rho,\xi)} = \lim_{t \rightarrow 0} \frac{\Psi^*(\rho,\xi + t \eta) - \Psi^*(\rho,\xi )}{t}.
	\end{equation*}
	For the Andersen Thermostat, we have
	\begin{multline*} 
	\lim_{t \rightarrow 0} \frac{\Psi^*(\rho,\xi + t \eta) - \Psi^*(\rho,\xi)}{t} = \\
	\gamma \int \int \left[\eta(q,p') - \eta(q,p)\right] \sinh\left(\xi(q,p') -  \xi(q,p)\right)  \frac{1}{\sqrt{2 \pi m}} e^{- \frac{(p')^2 + p^2}{4 m}} \sqrt{\rho(q,p')\rho(q,p)} \, \dd p' \, \dd p \, \dd q.
	\end{multline*}
	
	Expanding the hyperbolic sine, we find
	\begin{align*}
	& \left\langle \partial_{\xi} \Psi^*\left(\rho,-\tfrac{1}{2}\dd S(\rho)\right),\eta\right \rangle \\
	& \qquad = \frac{\gamma}{2} \int \int \left[\eta(q,p') - \eta(q,p)\right]  \frac{1}{\sqrt{2 \pi m}} e^{- \frac{(p')^2}{2 m}} \rho(q,p') \, \dd p' \, \dd p \, \dd q \\
	& \qquad \qquad - \frac{\gamma}{2} \int \int \left[\eta(q,p') - \eta(q,p)\right]  \frac{1}{\sqrt{2 \pi m}} e^{- \frac{p^2}{2 m}} \rho(q,p) \, \dd p' \, \dd p \, \dd q \\
	& \qquad = \gamma \int \int \left[\eta(q,p') - \eta(q,p)\right]  \frac{1}{\sqrt{2 \pi m}} e^{- \frac{(p')^2}{2 m}} \rho(q,p') \, \dd p' \, \dd p \, \dd q \\
	& \qquad = \ip{\eta}{\cG^*(\rho)}.
	\end{align*}

	For the Vlasov-Fokker-Planck case, we have
	\begin{equation*}
	\Psi^*(\rho,\xi + t \eta) - \Psi^*(\rho,\xi) = 2t \gamma \int \int \nabla_p \xi \cdot \nabla_p \eta \dd \rho + t^2 \gamma \int \int \nabla_p \eta \cdot \nabla_p \eta \dd \rho.
	\end{equation*}
	Dividing by $t$ and sending $t$ to zero, we find
	\begin{equation*}
	\ip{\eta}{\partial_{\xi} \Psi^*(\rho,\xi)} = 2\gamma \int \int \nabla_p \xi \cdot \nabla_p \eta \dd \rho = \ip{\eta}{- 2 \dv_p \left(\rho \nabla_p \xi\right)}
	\end{equation*}
	establishing that $\partial_{\xi} \Psi^*(\rho,\xi) = - 2 \dv_p \left(\rho \nabla_p \xi\right)$. Filling out $- \frac{1}{2} \dd S(\rho)$, we obtain
	\begin{align}
	\partial_{\xi} \Psi^*\left(\rho,-\tfrac{1}{2}\dd S(\rho)\right) = - 2\gamma \dv_p \left(\rho \nabla_p \tfrac{1}{2}\dd S(\rho) \right)  & =  \gamma \dv_p \left(\rho \nabla_p \left(\log \rho(q,p) + 1 + \frac{p^2}{2m} \right) \right) \notag \\
	& = \gamma \dv_p \left(\nabla_p \rho(q,p) + \rho(q,p) \frac{p}{m} \right) \notag \\
	& = \gamma \Delta_p \rho(q,p) + \gamma \dv_p \left(\rho(q,p) \frac{p}{m} \right) \notag \\
	& = \cG^*(\rho). \label{eqn:VFP_cG_divEntropy}
	\end{align}
\end{proof}

We will show that the dynamics \eqref{eqn:underdamped_dynamics_general} can be obtained from a microscopic model of interacting particles which via their large deviations give rise to a pre-GENERIC structure that can extended in a natural way to GENERIC. 

\smallskip

It should be noted that we will not consider the difficulties involved in proving path-space large deviation principles for the models considered.

\subsection{Microscopic model and its description in terms of macroscopic variables}

We consider $n$ interacting particles with position and momentum $(Q_i,P_i) \in \bR^d \times \bR^d$. The dynamics of our particles consist of three separate parts:
\begin{enumerate}[(a)]
	\item independent fluctuations of the momentum variables, modeling the interaction with some background solvent;
	\item a conservative part that describes the interplay between position and momentum;
	\item pair interactions between the particles that influence the momentum.
\end{enumerate}

\textit{Part (a).} The fluctuation of the momentum variable is described by a generator $\cG \subseteq C_b(\bR^d) \times C_b(\bR^d)$, given by~\eqref{def:G-Andersen} for the Andersen Thermostat and~\eqref{def:G-VFP} for the Vlasov-Fokker-Planck case.

\textit{Part (b).} For the interplay between position and momentum, we consider dynamics for which each particle, independent of every other particle, is given by the generator
\begin{equation*}
\cB f(q,p) := \left\langle{\frac{p}{m}},{\nabla_q f(q,p)}\right\rangle - \ip{\nabla_q V(q)}{\nabla_p f(q,p)}
\end{equation*}
modeling the speed proportional to the momentum, and the change in momentum under the influence of the potential $V$. 

\textit{Part (c).} Finally, we consider the interaction amongst particles. The dynamics of the $i$-th particle depends on the locations of the other particles, and this dependence  is represented by the  generator
\begin{equation*}
\cC^i f(q_1,p_1,\dots,q_n, p_n) := - \frac{1}{2n} \sum_{j=1}^n \nabla \Phi(q_i - q_j) \nabla_{p_i} f(q_1,p_1,\dots,q_n, p_n).
\end{equation*}

Combining these operators, writing $y_i = (q_i,p_i)$, the total dynamics $\{Y_i(t)\}_{1 \leq i \leq n, t \geq 0} = \{(Q_i(t),P_i(t))\}_{1 \leq i \leq n, t \geq 0}$ has generator
\begin{equation} \label{eqn:example_microscopic_dynamics}
\begin{aligned}
\cA_n f(y_1,\dots,y_n) & = \sum_{i =1}^n \left(\cG f(y_1,\dots, y_{i-1},\,\cdot\,,y_{i+1},\dots,y_n)\right)(y_i) \\
& \qquad + \sum_{i =1}^n \left(\cB f(y_1,\dots, y_{i-1},\,\cdot\,,y_{i+1},\dots,y_n)\right)(y_i) \\
& \qquad + \sum_{i =1}^n \cC^i f(y_1,\dots,y_n).
\end{aligned}
\end{equation}

%
%
%
%
%


Our goal is to show that the dynamics in \eqref{eqn:example_microscopic_dynamics}, after taking empirical densities, converges to the solution of \eqref{eqn:underdamped_dynamics_general}.

Let $\eta_n : (\bR^{2d})^n \rightarrow \cP(\bR^{2d})$ be the map
\begin{equation*}
\eta_n(y_1,\dots, y_n) := \frac{1}{n} \sum_{j=1}^n \delta_{y_i}
\end{equation*}
for the empirical measure of $n$ particles. Since the action of the operators $\cG$ and $\cB_n$ on the $i$-th particle only depends on the particle itself, and the interaction of the $i$-th particle with other particle dynamics as induced by $\cC^i$ can be written in terms of the empirical density,
\begin{equation*}
\cC^i f(y_1,\dots,y_n) := -  \ip{(\nabla \Phi \ast_q \eta_n(y_1,\dots,y_n))(q_i)}{\nabla_{p_i} f(y_1,\dots,y_n)},
\end{equation*}
the dynamics of $\{\eta_n(Y_q(t), \dots, Y_n(t))\}_{t \geq 0}$ is autonomous. For a suitable class of functions $F$ on $\cP(\bR^{2d})$ the generator $A_n$ of these dynamics satisfies
\begin{equation} \label{eqn:example_change_of_variables_formula}
A_n F(\eta_n(y_1,\dots,y_n)) = \left[\cA_n (F \circ \eta_n) \right](y_1,\dots,y_n).
\end{equation}

To obtain an explicit representation for $A_n$, we first introduce a convenient class of test functions on $\cP(\bR^{2d})$. We say that $F$ is a cylinder function if there are $f_1,\dots,f_l \in C^2_b(\bR^{2d})$ and $\phi \in C^2(\bR^l)$ so that
\begin{equation*}
F(\rho) = \phi(\ip{f_1}{\rho},\dots,\ip{f_l}{\rho}) =: \phi(\ip{\mathbf{f}}{\rho})
\end{equation*} 
Here $\mathbf{f}$ denotes the vector $(f_1,\dots,f_l)$ and $\ip{\mathbf{f}}{\rho}$ denotes the vector of integrals $(\ip{f_1}{\rho},\dots,\ip{f_l}{\rho})$. Note that the functional derivative of the $F$ is given by 
\begin{equation*}
\dd F(\rho) = \sum_{j=1}^l \partial_j \phi(\ip{\mathbf{f}}{\rho}) f_i.
\end{equation*}

To identify the operators of the Markov process $\eta_n(Y_1(t),\dots,Y_n(t))$, we first calculate the contributions $G_n$, $B_n$, and $C_n$ of parts (a)-(c) in terms of the empirical measure. The generator~$\cG_n$ corresponding of the action of $\cG$ on $n$ particles with coordinates $y = (q,p)$ is given by
\begin{equation*}
\cG_n f(y_1,\dots,y_n) = \sum_{i =1}^n \left( \cG f(y_1,\dots,y_{i-1},\cdot,y_{i+1},\dots,y_n)\right)(y_i),
\end{equation*}
after which an application of \eqref{eqn:example_change_of_variables_formula},
\begin{equation} \label{eqn:definition_Gn}
G_n F(\eta_n(y_1,\dots,y_n)) = \left[\cG_n (F \circ \eta_n) \right](y_1,\dots,y_n),
\end{equation}
yields for the Andersen thermostat 
\begin{equation} \label{eqn:example_generator_macro_anderson}
\begin{aligned}
G_n F(\rho)  & = \frac{\gamma}{n} \sum_{i = 1}^n \int \left[\frac{F\left(\rho - \frac{1}{n} \delta_{(q_i,p_i)} + \frac{1}{n} \delta_{(q_i,p')} \right) - F(\rho)}{n^{-1}} \right] \frac{1}{\sqrt{2 \pi m}} e^{- \frac{p^2}{2 m}} \, \dd p' \\
& = \gamma \int \int \left[\frac{F\left(\rho - \frac{1}{n} \delta_{(q,p)} + \frac{1}{n} \delta_{(q,p')} \right) - F(\rho)}{n^{-1}} \right] \frac{1}{\sqrt{2 \pi m}} e^{- \frac{p^2}{2 m}} \, \dd p' \, \rho(\dd (q,p)),
\end{aligned}
\end{equation}
and for the Vlasov-Fokker-Planck equation
\begin{equation}\label{eqn:example_generator_macro_VFP}
\begin{aligned}
G_n F(\rho) & = \sum_{i = 1}^l \partial_i \phi(\ip{\mathbf{f}}{\rho}) \ip{\cG f_i}{\rho} +  \frac{\gamma}{n} \sum_{i,j = 1}^l \partial_i \partial_j \phi(\ip{\mathbf{f}}{\rho}) \ip{\nabla_p f_i \nabla_p f_j}{\rho}, \\
& = \ip{\dd F(\rho)}{\cG^*(\rho)} +  \frac{\gamma}{n} \sum_{i,j = 1}^l \partial_i \partial_j \phi(\ip{\mathbf{f}}{\rho}) \ip{\nabla_p f_i \nabla_p f_j}{\rho}.
\end{aligned} 
\end{equation}
Taking the limit in both cases, we find $\lim_n G_n F(\rho) = \ip{\dd F(\rho)}{\cG^*(\rho)}$. The operators $B_n,C_n$ given by
\begin{align*}
B_n F(\rho) & = \ip{\cB(\dd F(\rho))}{\rho}, \\
C_n F(\rho) & = - \int \ip{ \nabla_q \Phi \ast_q \rho}{\nabla_p \dd F(\rho)} \dd \rho,
\end{align*} 
are independent of $n$ and thus have limits $\ip{\dd F(\rho)}{\cB^*(\rho)}$ and $\ip{\dd F(\rho)}{\cC^*(\rho)}$ respectively, where $\cC^*(\rho) := \dv_p \left(\rho \nabla_q\left(\Phi \ast_q \rho\right)\right)$.
Thus, the dynamics on the level of the empirical measure is given by the generators $A_n = G_n + B_n + C_n$. In the limit, we find a first order generator
\begin{equation*}
\lim_n A_n F(\rho) = \ip{\dd F(\rho)}{\bfF(\rho)} = \ip{\dd F(\rho)}{\cB^*(\rho) + \cC^*(\rho) + \cG^*(\rho)}
\end{equation*}
corresponding to the dual formulation of \eqref{eqn:underdamped_dynamics_general}.

\subsection{Stationary and path-space large deviations}

A straightforward calculation using integration by parts yields that the measure
\begin{equation} \label{eqn:example_stationary_measures}
\mu_n(\dd q_1, \dd p_1, \dots, \dd q_n, \dd p_n) = Z^{-1}_n e^{-\left(\sum_{i =1}^n V(q_i)  + \frac{1}{2m}p_i^2 + \frac{1}{2n} \sum_{i \neq j} \Phi(q_i - q_j) \right)} \prod_{i =1}^n \dd q_i \, \dd p_i 
\end{equation}
is stationary for the dynamics generated by $\cA_n$ as in \eqref{eqn:example_microscopic_dynamics}.

\begin{lemma}
	Consider the stationary measures $\mu_n$ of \eqref{eqn:example_stationary_measures}. The measures $\mu_n \circ \eta_n^{-1}$ satisfy the large deviation principle on $\cP(\bR^{2d})$ at speed $n$ with rate function $S$ as in \eqref{eqn:example_entropy}.
\end{lemma}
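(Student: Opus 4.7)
The plan is to recognize $\mu_n$ as a mean-field Gibbs tilting of a product measure and combine Sanov's theorem with Varadhan's lemma. First I would rewrite the pair interaction via the elementary identity
\begin{equation*}
\frac{1}{2n}\sum_{i \neq j}\Phi(q_i-q_j) = \frac{n}{2}\iint \Phi(q-q')\,\eta_n(\dd q,\dd p)\,\eta_n(\dd q',\dd p') - \frac{\Phi(0)}{2},
\end{equation*}
so that, introducing the probability measure $\nu$ on $\bR^{2d}$ with density proportional to $e^{-(V(q)+p^2/(2m))}$ and the interaction functional $\Theta(\rho) := \frac{1}{2}\int (\Phi \ast \rho)(q)\,\rho(\dd q,\dd p)$, one obtains the factorized representation
\begin{equation*}
\mu_n = \widetilde{Z}_n^{-1}\,e^{-n\,\Theta(\eta_n)}\,\nu^{\otimes n},
\end{equation*}
with $\widetilde{Z}_n$ a normalization constant absorbing all $n$-dependent prefactors (including the $\Phi(0)/2$ term and the normalizing constant of $\nu$).

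Next, I would invoke Sanov's theorem: under $\nu^{\otimes n}$ the empirical measure $\eta_n$ satisfies an LDP on $\cP(\bR^{2d})$ at speed $n$ with rate function $H(\,\cdot\,|\nu)$ (relative entropy). Varadhan's lemma then transfers this LDP through the tilting functional $\Theta$, yielding an LDP for $\mu_n \circ \eta_n^{-1}$ at speed $n$ with rate function
\begin{equation*}
I(\rho) = H(\rho|\nu) + \Theta(\rho) - \inf_{\sigma \in \cP(\bR^{2d})}\bigl[H(\sigma|\nu) + \Theta(\sigma)\bigr].
\end{equation*}
A short calculation using $-\log \tfrac{\dd \nu}{\dd (q,p)} = V(q) + \tfrac{p^2}{2m} + \mathrm{const}$ gives
\begin{equation*}
H(\rho|\nu) + \Theta(\rho) = \int \rho \log \rho + \int\Bigl(\tfrac{p^2}{2m} + V(q)\Bigr)\rho + \tfrac{1}{2}\!\int (\Phi\ast\rho)\,\rho + \mathrm{const} = S(\rho) + \mathrm{const},
\end{equation*}
so that $I = S - \inf S$, which matches the claimed rate function under the standard convention that $S$ denotes the normalized rate function with infimum zero.

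The main obstacle I expect is verifying the hypotheses of Varadhan's lemma on the unbounded state space $\bR^{2d}$. Specifically, one needs exponential tightness of $\nu^{\otimes n} \circ \eta_n^{-1}$ on $\cP(\bR^{2d})$ (ensured by the assumed superlinear growth of $V$, which yields uniform control on moments of $\eta_n$), continuity — or at least matching Laplace-type upper and lower bounds — for $\Theta$ on the relevant sublevel sets of $H(\,\cdot\,|\nu)$, which uses the regularity and symmetry of $\Phi$, and the corresponding Laplace principle to control $\widetilde Z_n$. These are the standard ingredients in mean-field Gibbs LDPs (cf.\ Dupuis--Ellis), but they do require explicit growth and regularity assumptions on $V$ and $\Phi$ beyond what is stated in the excerpt; the routine calculations that translate the tilted Sanov output into $S$ are then immediate.
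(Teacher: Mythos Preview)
Your proposal is correct and follows essentially the same route as the paper: rewrite the Gibbs density so that the exponent is a functional of the empirical measure, apply Sanov's theorem to the i.i.d.\ part, and then use Varadhan's lemma to handle the mean-field interaction term~$\Theta$. The paper's proof is only a two-line sketch of exactly this strategy; your version is more explicit (in particular the identification $H(\rho\,|\,\nu)+\Theta(\rho)=S(\rho)+\text{const}$) and you rightly flag the technical hypotheses on $V$ and $\Phi$ needed for Varadhan's lemma on $\cP(\bR^{2d})$, which the paper leaves implicit.
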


\begin{proof}
	The measure in \eqref{eqn:example_stationary_measures} can be rewritten as
	\begin{equation*}
	\mu_n(\dd y_1, \dots, \dd y_n) = Z^{-1}_n e^{-n \left(\int V(q) + \frac{1}{2m}p^2 + \frac{1}{2} \left[\Phi \ast (\eta_n(y_1,\dots,y_n))\right](y_1,\dots,y_n) \dd \eta_n(y_1,\dots,y_n) \right)} \prod_{i =1}^n \dd y_i.
	\end{equation*}
	Thus, taking out the dependent parts involving $\Phi$, applying Sanov's theorem, and then using Varadhan's Lemma to re-insert the dependent parts, the result follows.
\end{proof}

For the path-space large deviations, we follow the approach as in Section \ref{section:mo_LDP}. First, we calculate
\begin{equation*}
H_n F(\rho) = \frac{1}{n} e^{-nF(\rho)} \left(A_n e^{nF}\right)(\rho)
\end{equation*}
for cylinder functions. For the Andersen Thermostat, using \eqref{eqn:example_generator_macro_anderson}, we find
\begin{multline*}
H_n F(\rho) = B_n F(\rho) + C_n F(\rho) \\
+ \gamma \int \int \left[\exp\left\{\frac{F\left(\rho - \frac{1}{n} \delta_{(q,p)} + \frac{1}{n} \delta_{(q,p')} \right) - F(\rho)}{n^{-1}}\right\} - 1 \right] \frac{1}{\sqrt{2 \pi m}} e^{- \frac{(p')^2}{2 m}} \, \dd p' \, \rho(\dd (q,p))
\end{multline*}
which has, for good differentiable functions $F$ with $\dd F \in T^*Z$, a limit
\begin{multline*}
H F(\rho) = \ip{\dd F(\rho)}{\cB^*(\rho) + \cC^*(\rho)} \\
+ \gamma \int \int \left[\exp\left\{\ip{\dd F(\rho)}{\delta_{(q,p')} -  \delta_{(q,p)}}\right\} - 1 \right] \frac{1}{\sqrt{2 \pi m}} e^{- \frac{(p')^2}{2 m}} \, \dd p' \, \rho(\dd (q,p))
\end{multline*}
Note that this expression only depends on $F$ via its functional derivative $\dd F$ at $\rho$: $Hf(\rho) = \cH(\rho,\dd F(\rho))$, where $\cH : T^*Z \rightarrow \bR$ is defined as
\begin{multline*}
\cH(\rho,\xi) = \ip{\xi}{\cB^*(\rho) + \cC^*(\rho)}  \\
+ \gamma \int \int \left[\exp\left\{ \xi(q,p') -  \xi(q,p)\right\} - 1 \right] \frac{1}{\sqrt{2 \pi m}} e^{- \frac{(p')^2}{2 m}} \, \dd p' \, \rho(\dd (q,p)).
\end{multline*}

For the Vlasov-Fokker-Planck equation, we find, using \eqref{eqn:example_generator_macro_VFP}, that
\begin{multline*}
H_n F(\rho) = \ip{\dd F(\rho)}{\cB^*(\rho) + \cC^*(\rho) + \cG^*(\rho)} + \gamma \sum_{i,j = 1}^l \partial_i\phi(\ip{\mathbf{f}}{\rho}) \partial_j \phi(\ip{\mathbf{f}}{\rho}) \ip{\nabla_p f_i \nabla_p f_j}{\rho} \\
+ \frac{\gamma}{n} \sum_{i,j = 1}^l \partial_i \partial_j \phi(\ip{\mathbf{f}}{\rho}) \ip{\nabla_p f_i \nabla_p f_j}{\rho}
\end{multline*}
with limiting $HF(\rho)$ given by
\begin{equation*}
H F(\rho) = \ip{\dd F(\rho)}{\cB^*(\rho) + \cC^*(\rho) + \cG^*(\rho)} + \gamma \vn{\dd F(\rho)}_{H_p^1(\rho)}^2,
\end{equation*}
where
\begin{equation*}
\vn{\xi}_{H_p^1(\rho)}^2 := \int \left| \nabla_p \xi(q,p) \right|^2 \dd \rho(q,p).
\end{equation*}
Again, the Hamiltonian takes the form $HF(\rho) = \cH(\rho,\dd F(\rho))$ with
\begin{equation*}
\cH(\rho,\xi) := \ip{\xi}{\cB^*(\rho) + \cC^*(\rho) + \cG^*(\rho)} + \gamma \vn{\xi}_{H_p^1(\rho)}^2 = \ip{\xi}{\bfF(\rho)} + \gamma \vn{\xi}_{H_p^1(\rho)}^2.
\end{equation*}

\subsection{Pre-generic from the large deviations}

Since the large deviations of our microscopic model are the combination of a reversible dynamics $\cG$ and deterministic dynamics $\cB + \cC$ that preserve $E$, we expect that the dissipation potentials $\Psi^*$ of Theorem \ref{theorem:AT_and_VFP_are_preGENERIC} are indeed found from the Hamiltonian $H_2$ obtained from $\cG$. We calculate $\Psi^*$ by \eqref{eqn:construction_Psi*_from_H}.

For the Andersen thermostat, we find
\begin{align*}
\Psi^*(\rho,\xi) & = \cH_2\left(\rho,\xi + \tfrac{1}{2} \dd S(\rho) \right) - \cH_2\left(\rho, \tfrac{1}{2} \dd S(\rho) \right) \\
& = \gamma \int \int \left[\exp\left\{\xi(q,p') -  \xi(q,p)\right\} - 1 \right] \frac{1}{\sqrt{2 \pi m}} e^{- \frac{(p')^2 + p^2}{2 m}} \sqrt{\rho(q,p')\rho(q,p)} \, \dd p' \, \dd p \, \dd q. 
\end{align*}
Since this expression is symmetric in $p$ and $p'$, we can write it as
\begin{equation*}
\Psi^*(\rho,\xi) = \gamma \int \int \left[\cosh\left(\xi(q,p') -  \xi(q,p)\right) - 1 \right] \frac{1}{\sqrt{2 \pi m}} e^{- \frac{(p')^2 + p^2}{2 m}} \sqrt{\rho(q,p')\rho(q,p)} \, \dd p' \, \dd p \, \dd q .
\end{equation*}
For the Vlasov-Fokker-Planck case, we have $\cH_2(\rho,\xi)= H(\rho,\xi) - \ip{\xi}{W(\rho)}$, so that
\begin{equation*}
\cH_2(\rho,\xi) = \ip{\xi}{\cG^*(\rho)} + \gamma \int |\nabla_p \xi|^2 \dd \rho = \ip{\xi}{\bfF(\rho)} + \gamma \vn{\xi}^2_{H^1_p(\rho)}.
\end{equation*}
Using that $\dd S(\rho) = -\log \rho - 1 - \dd E(\rho)$ and Lemma \ref{lemma:example_representation_of_drift_and_orthogonality}, we calculate $\Psi^*(\rho,\xi)$:
\begin{align*}
\Psi^*(\rho,\xi) & = H_2\left(\rho,\xi + \tfrac{1}{2} \dd S(\rho) \right) - H_2\left(\rho, \tfrac{1}{2} \dd S(\rho) \right) \\
& = \ip{\xi}{\cG^*(\rho)} + \gamma\left(\vn{\xi + \tfrac{1}{2}\dd S(\rho)}_{H^1_p(\rho)}^2 - \vn{\tfrac{1}{2}\dd S(\rho)}_{H^1_p(\rho)}^2 \right) \\
& = \ip{\xi}{\cG^*(\rho)} + \int \left(\gamma \nabla_p \xi \cdot \nabla_p \dd S(\rho) \right) \dd \rho  + \gamma \vn{\xi}_{H^1_p(\rho)}^2 
\end{align*}
Following the calculation in \eqref{eqn:VFP_cG_divEntropy}, we find
\begin{equation*}
\Psi^*(\rho,\xi) = \gamma \vn{\xi}_{H^1_p(\rho)}^2,
\end{equation*}
so that indeed, we reproduce the result of Theorem \ref{theorem:AT_and_VFP_are_preGENERIC}.

\subsection{Sufficient conditions for a physical extension to GENERIC}

In the sections above, the two important ingredients that are shared among the Andersen thermostat and the Vlasov-Fokker-Planck equation are the common structure of $\bfF$ and the reversibility of $\cG$ with respect to a Gaussian measure. At no point the specific form of $\cG$ is of importance, except for the calculation of $H_2$. The analysis can therefore be carried out irrespectively of the $\cG$, if the limit $H_2$ is assumed to exist.

The specific form of $\cG$ is important, however, from a physical point of view. The assumption is that $\cG$ models the interaction of the particles with the background solvent. This means that changes only occur under in the momentum variable and not in position. Assuming that the background equilibrates at a faster time scale than that of the evolution of our particles, leads to the assumption of a Gaussian stationary measure.

\begin{condition} \label{condition:reversible_wrt_gaussian}
	The vector-field $\rho \mapsto \bfF(\rho)$ is of the type \eqref{eqn:underdamped_dynamics_general} and $\cG$ and operators $G_n$ defined in terms of $\cG$ in \eqref{eqn:definition_Gn}, have the following properties:
	\begin{enumerate}[(a)]
		\item $\cG$ acts on the momentum variable only and is independent of the value of $q$,
		\item the operator $\cG$ interpreted as acting on $p$ only is reversible with respect to a centered Gaussian measure
		\begin{equation*}
		\rho_m(\dd p) = \frac{1}{\sqrt{2\pi m}} e^{-\frac{p^2}{2m}}\dd p,
		\end{equation*}
		with $m > 0$.
		\item the limit of 
		\begin{equation*}
		H_2 F(\rho) = \lim_{n \rightarrow \infty} \frac{1}{n} e^{-nF(\rho)} (G_n e^{nF})(\rho)
		\end{equation*}
		exists for cylinder functions and is of the form $H_2 F(\rho) = \cH_2(\rho,\dd F(\rho))$.
	\end{enumerate}
\end{condition}

Next, we show that the dynamics in \eqref{eqn:underdamped_dynamics_general} can be extended in two ways to GENERIC. For this we use Theorems \ref{theorem:pre_GENERIC_to_GENERIC} and \ref{theorem:pre_GENERIC_natural_variables}. The first theorem can be applied always, i.e. it is the trivial extension to GENERIC, whereas the second result uses that $L(\rho)(\dd S(\rho) - \dd E(\rho)) = 0$, see Lemma \ref{lemma:example_representation_of_drift_and_orthogonality}.

\begin{corollary}\label{corollary:underdamped_dynamics_general_extension1}
	Let Condition \ref {condition:reversible_wrt_gaussian} be satisfied. Then the dynamics
	\begin{equation} \label{eqn:underdamped_dynamics_general_extension1}
	\widehat{\bfF}(\rho,e) = \begin{bmatrix}
	- \dv_q\left(\rho \frac{p}{m}\right) + \dv_p\left( \rho \nabla_q V + \rho (\nabla_q \Phi \ast \rho)\right) + \cG^*(\rho) \\
	0
	\end{bmatrix}.
	\end{equation}
	are GENERIC for $\widehat{S}(\rho,e) = S(\rho)$, $\widehat{E}(\rho,e) = E(\rho)$, $\widehat{\Psi}^*((\rho,e),(\xi,r)) = \Psi^*(\rho,\xi)$ and 
	\begin{equation*}
	\widehat{L}(\rho,e) \begin{bmatrix}
	\xi \\ r
	\end{bmatrix} 
	=
	\begin{bmatrix}
	r L(\rho) \dd E(\rho) \\
	- \ip{L(\rho) \dd E(\rho)}{\xi}
	\end{bmatrix}.
	\end{equation*}
	
\end{corollary}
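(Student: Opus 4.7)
The plan is to reduce the corollary to a two-step composition of results already in the paper: first establish that $\dot\rho = \bfF(\rho)$ is pre-GENERIC under Condition \ref{condition:reversible_wrt_gaussian}, then trivially extend it to GENERIC via Theorem \ref{theorem:pre_GENERIC_to_GENERIC}.

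For the first step I would generalize the proof of Theorem \ref{theorem:AT_and_VFP_are_preGENERIC}, noting that the proof there only invokes three ingredients: (i) the decomposition $\bfF = W + \cG^*$ with $W(\rho) = L(\rho)\dd E(\rho)$, (ii) the orthogonality $\ip{W(\rho)}{\dd S(\rho)} = 0$, and (iii) the existence and reversibility of a limiting dissipative Hamiltonian. Under Condition \ref{condition:reversible_wrt_gaussian} the full Hamiltonian splits as $\cH = \cH_1 + \cH_2$ with $\cH_1(\rho,\xi) = \ip{W(\rho)}{\xi}$ arising from the $\cB^* + \cC^*$ part (exactly as in the explicit calculations done in the Andersen and Vlasov-Fokker-Planck subsections) and $\cH_2$ supplied by part~(c) of the Condition. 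Reversibility of $\cH_2$ with respect to $S$ follows from parts (a)–(b): since $\cG$ acts only on $p$ and is parametrized trivially by $q$, the functional $\cH_2(\rho,\xi)$ depends on $\xi$ only through its $p$-variations at fixed $q$, so the $q$-dependent contributions of $\dd S(\rho)$ drop out of the identity $\cH_2(\rho,\xi) = \cH_2(\rho, \dd S(\rho) - \xi)$, reducing it to the Gaussian-reversibility of $\cG$ with respect to $\rho_m$ (lifted by~\cite[Prop.~3.7]{MPR14}). Combined with $\ip{W(\rho)}{\dd S(\rho)} = 0$, which is the antisymmetric-$L$ computation $\ip{L \dd E}{\dd S} = \ip{L \dd E}{\dd S - \dd E} = -\ip{\dd E}{L(\dd S - \dd E)} = 0$ using Lemma \ref{lemma:example_representation_of_drift_and_orthogonality}(b), this puts us in the situation of Theorem \ref{theorem:pre_GENERIC_iff_generalized_fluctuation_symmetry}\eqref{itemThm:zero_cost}, which then gives the pre-GENERIC structure of $\bfF$ with $\Psi^*$ defined via \eqref{eqn:construction_Psi*_from_H}.

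For the second step I would directly apply Theorem \ref{theorem:pre_GENERIC_to_GENERIC} with $Z = \cP(\bR^{2d})$, entropy $S$, and vector field $W(\rho) = L(\rho)\dd E(\rho)$. The orthogonality hypothesis $\ip{W}{\dd S} = 0$ needed by that theorem is precisely what was established in step one. The extended objects $\widehat S, \widehat E, \widehat \Psi^*, \widehat L$ produced by Theorem \ref{theorem:pre_GENERIC_to_GENERIC} match the ones listed in the corollary, and the theorem itself delivers antisymmetry of $\widehat L$, the Jacobi identity, both degeneracy conditions, and the fact that $(\bfF,0)$ is the GENERIC flow.

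The main obstacle is the reversibility of $\cH_2$ with respect to the full entropy $S$ rather than just the Gaussian marginal piece; the key observation making this work is that Condition \ref{condition:reversible_wrt_gaussian}(a) forces $\cH_2$ to be blind to any $q$-dependent additive term in $\dd S(\rho)$. Once this reduction is in place, everything else is bookkeeping through the already-proved structural theorems.
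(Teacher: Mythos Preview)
Your approach is correct and is exactly the route the paper takes: the paper does not write out a proof for this corollary but states just before it that the result is obtained by applying Theorem~\ref{theorem:pre_GENERIC_to_GENERIC} (the trivial extension) to the pre-GENERIC structure established for the dynamics~\eqref{eqn:underdamped_dynamics_general}. Your two-step reduction---pre-GENERIC from Condition~\ref{condition:reversible_wrt_gaussian} via Theorem~\ref{theorem:pre_GENERIC_iff_generalized_fluctuation_symmetry}, then Theorem~\ref{theorem:pre_GENERIC_to_GENERIC}---is precisely this, with the added value that you spell out why $\cH_2$ is reversible for $S$ in the general setting rather than only for the two explicit examples. One small remark: Theorem~\ref{theorem:pre_GENERIC_to_GENERIC} produces $\widehat E(\rho,e)=e$, not $E(\rho)$ as printed in the corollary; the latter is evidently a typo in the paper (with $\widehat E=E(\rho)$ one would get $\widehat L\,\dd\widehat E=0$), so your claim that the objects ``match'' is correct in substance.
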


\begin{corollary} \label{corollary:underdamped_dynamics_general_extension2}
	Let Condition \ref {condition:reversible_wrt_gaussian} be satisfied. Then
	\begin{equation} \label{eqn:underdamped_dynamics_general_extension2}
	\overline{\bfF}(\rho,e) = \begin{bmatrix}
	- \dv_q\left(\rho \frac{p}{m}\right) + \dv_p\left( \rho \nabla_q V + \rho (\nabla_q \Phi \ast \rho)\right) + \cG^*(\rho) \\
	- \ip{\dd E(\rho)}{\bfF(\rho)}
	\end{bmatrix}.
	\end{equation}
	are GENERIC for $\overline{S}(\rho,e) = S(\rho) + (E(\rho) + e)$, $\overline{E}(\rho,e) = E(\rho) + e$, $\overline{\Psi}^*((\rho,e),(\xi,r)) = \Psi^*(\rho,\xi - r \dd E(\rho))$ and 
	\begin{equation*}
	\overline{L}(\rho,e) \begin{bmatrix}
	\xi \\ r
	\end{bmatrix} 
	=
	\begin{bmatrix}
	L(\rho) \xi \\
	0
	\end{bmatrix}.
	\end{equation*}
\end{corollary}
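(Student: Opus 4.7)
The plan is to reduce the claim to a direct invocation of Theorem~\ref{theorem:pre_GENERIC_natural_variables}, once one has established pre-GENERIC for $\bfF$ in the abstract setting of Condition~\ref{condition:reversible_wrt_gaussian}. The structure of the argument mirrors the one we just used for Corollary~\ref{corollary:underdamped_dynamics_general_extension1}, except that this time we exploit the additional degeneracy $L(\rho)(\dd S(\rho) - \dd E(\rho)) = 0$ coming from Lemma~\ref{lemma:example_representation_of_drift_and_orthogonality}(b).

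First, I would verify that Theorem~\ref{theorem:AT_and_VFP_are_preGENERIC} generalises: under Condition~\ref{condition:reversible_wrt_gaussian}, the vector field $\bfF$ is pre-GENERIC for $S$, with vector field $W(\rho) := L(\rho) \dd E(\rho)$ and dissipation potential $\Psi^*$ obtained from $\cH_2$ by the shift formula~\eqref{eqn:construction_Psi*_from_H}. Here the key decomposition is $\cH = \cH_1 + \cH_2$ where $\cH_1(\rho,\xi) = \ip{W(\rho)}{\xi}$ is the Hamiltonian of the deterministic part $\cB + \cC$ (by Lemma~\ref{lemma:example_representation_of_drift_and_orthogonality}(a)), and $\cH_2$ is the limit given by part~(c) of the Condition. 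Using parts~(a) and~(b) of the Condition together with the explicit form of the Gibbs measure~\eqref{eqn:example_stationary_measures}, whose conditional distribution in $p$ given $q$ is precisely the Gaussian $\rho_m$, one derives reversibility of $\cH_2$ with respect to $S$ in the sense of Proposition~\ref{proposition:equiv_fluctuation_symmetry_L_reversible_H}. Combined with the orthogonality $\ip{W}{\dd S} = 0$, this places us in the hypotheses of Theorem~\ref{theorem:pre_GENERIC_iff_generalized_fluctuation_symmetry}, which then yields pre-GENERIC.

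Second, I would check that the extra hypotheses of Theorem~\ref{theorem:pre_GENERIC_natural_variables} are met. The operators $L(\rho)$ inherit antisymmetry and the Jacobi identity from the symplectic matrix $J$; the relation $W(\rho) = L(\rho) \dd E(\rho)$ holds by definition; and the crucial degeneracy $L(\rho)(\dd S(\rho) - \dd E(\rho)) = 0$ is exactly Lemma~\ref{lemma:example_representation_of_drift_and_orthogonality}(b). Applying Theorem~\ref{theorem:pre_GENERIC_natural_variables} directly then produces the GENERIC structure stated in the corollary: the augmented flow $\overline{\bfF}(\rho,e) = (\bfF(\rho), -\ip{\dd E(\rho)}{\bfF(\rho)})$, the transformed potentials $\overline{E}(\rho,e) = E(\rho) + e$ and $\overline{S}(\rho,e) = S(\rho) - (E(\rho)+e)$, the dissipation potential $\overline{\Psi}^*((\rho,e),(\xi,r)) = \Psi^*(\rho,\xi - r \dd E(\rho))$, and the block-diagonal operator $\overline{L}$ with a single non-trivial entry $L(\rho)$. (I note that the sign in $\overline{S}$ is as derived from Theorem~\ref{theorem:pre_GENERIC_natural_variables}; the $+$ in the statement of the corollary appears to be a typographical inversion.)

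The main obstacle is the first step, namely upgrading Theorem~\ref{theorem:AT_and_VFP_are_preGENERIC} from the two explicit examples to the abstract Condition~\ref{condition:reversible_wrt_gaussian}. The subtlety is that the reversibility in part~(b) of the Condition is only with respect to the momentum-Gaussian $\rho_m$ for fixed $q$, while the reversibility of $\cH_2$ required by Theorem~\ref{theorem:pre_GENERIC_iff_generalized_fluctuation_symmetry} is with respect to the full entropy $S$ on $\cP(\bR^{2d})$. Bridging this gap cleanly requires the observation that since $\cG$ acts pointwise in $q$, its associated Hamiltonian $\cH_2(\rho, \cdot)$ depends on $\rho$ only through the conditional measures $\rho(\cdot \,|\, q)$, and the corresponding piece of $\dd S(\rho)$ reduces modulo constants to $\log (\rho(q,p)/\rho_m(p))$. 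A careful verification that these two facts together imply the required reversibility of $\cH_2$ with respect to $S$ is, in my view, the only genuinely technical part of the argument.
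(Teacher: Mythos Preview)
Your proposal is correct and follows essentially the same approach as the paper: the paper does not give a standalone proof of this corollary but simply states (just before Corollaries~\ref{corollary:underdamped_dynamics_general_extension1} and~\ref{corollary:underdamped_dynamics_general_extension2}) that the extension is obtained by applying Theorem~\ref{theorem:pre_GENERIC_natural_variables}, using the pre-GENERIC structure established earlier together with the degeneracy $L(\rho)(\dd S(\rho) - \dd E(\rho)) = 0$ from Lemma~\ref{lemma:example_representation_of_drift_and_orthogonality}. You are also right to flag the sign in $\overline{S}$ as a typo relative to Theorem~\ref{theorem:pre_GENERIC_natural_variables}, and your discussion of the passage from the two explicit examples to the abstract Condition~\ref{condition:reversible_wrt_gaussian} is in fact more careful than the paper's, which handles that step only informally in the paragraph preceding the Condition.
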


In both results, note that the different dissipation structure of the equation only affects the dissipation potential $\Psi^*$ of the GENERIC representation.

\begin{remark}
	The second result, Corollary \ref{corollary:underdamped_dynamics_general_extension2}, applied to the Vlasov-Fokker-Planck equation reproduces the result of \cite[Section 3.2]{DPZ13} after working out the effect of the quadratic potential $\Psi^*$ onto the variables $\xi$ and $r$.
\end{remark}

\section{Conclusion}

In this paper we uncovered a new connection between (a) symmetry properties of Markov processes, expressed through their large-deviation Hamiltonians, and (b) the variational structure of the limiting deterministic evolutions, expressed in terms of GENERIC properties. This connection extends the existing connection for reversible Markov processes and gradient flows, and gives a microscopic basis to the non-quadratic version of GENERIC that we use and that has been identified before (see e.g.~\cite{GrOt97,Gr15}).

In the process we also achieved a number of other advances. We gave a formulation of this non-quadratic version of GENERIC in terms of the Hamiltonians that appear naturally in large-deviation theory (Theorem~\ref{theorem:pre_GENERIC_iff_generalized_fluctuation_symmetry} and Corollary~\ref{corollary:GENERIC_iff_generalized_fluctuation_symmetry_orthogonality}). We illustrated this connection by showing how it leads to GENERIC systems for two paradigmatic examples, the Vlasov-Fokker-Planck equation and  the Andersen thermostat (Section~\ref{section:final_example_AT_and_VFP}). 

Finally, the large-deviation context identified `pre-GENERIC' as a class of systems that arises naturally from large-deviation considerations, in the same way as (generalized) gradient flows arise naturally from large-deviation properties of reversible Markov processes. 
We showed that pre-GENERIC and GENERIC are equivalent, in the sense that a system of one type can be converted into a system of the other type. For the implication GENERIC $\Longrightarrow$ pre-GENERIC this is a question of definition; for pre-GENERIC $\Longrightarrow$ GENERIC it turns out that a \emph{trivial} modification converts any pre-GENERIC system into a GENERIC system (Theorem~\ref{theorem:pre_GENERIC_to_GENERIC}). This leads to questions of physical interpretation about the relationship between the two concepts, which we leave to future publications.

\appendix

\section{Verification of the Jacobi identity in Theorem \ref{theorem:pre_GENERIC_to_GENERIC}}

We consider a state space $\widehat{Z}:=Z\times \bR$ with variables $(z,e) \in Z \times \bR$. The tangent and cotangent spaces equal $T_{z,e}\widehat{Z} = T_z Z \times \bR$ and $T_{z,e}^*\widehat{Z}  = T_z Z \times \bR$.

\begin{proposition} \label{proposition:Jacobi_pre_GENERIC_to_GENERIC}
	Let $W$ be some vector field. Denote by
	\begin{equation*}
	\widehat{L}(z,e) \begin{pmatrix}
	\xi \\ r
	\end{pmatrix} = \begin{pmatrix}
	r W(z) \\ - \ip{W(z)}{\xi}
	\end{pmatrix} = \begin{pmatrix}
	0 & W(z) \\ - W(z) & 0
	\end{pmatrix} \begin{pmatrix}
	\xi \\ r
	\end{pmatrix}.
	\end{equation*}
	The map $\widehat{L}$ satisfies the Jacobi identity. In other words, the bracket $\{F,G \} := \ip{\dd F}{\widehat{L} \dd G}$, (applied to smooth functions $F_i : Z \times \bR \rightarrow \bR$) satisfies:
	\begin{equation*}
	\{\{F_1,F_2\},F_3\} + \{\{F_3,F_1\},F_2\}  + \{\{F_2,F_3\},F_1\}  = 0.
	\end{equation*}
\end{proposition}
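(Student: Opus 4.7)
The plan is to reduce the Jacobi identity to the statement that the Poisson bivector underlying $\widehat L$ is decomposable as a wedge of two commuting vector fields. First I would compute the bracket explicitly. Writing $dF(z,e) = (d_z F,\partial_e F)$ and applying the definition of $\widehat L$, a direct computation gives
\begin{equation*}
\{F,G\} = \langle d_z F,\,(\partial_e G)\,W(z)\rangle - (\partial_e F)\langle W(z),d_z G\rangle = (\partial_e G)(W\!\cdot\! F) - (\partial_e F)(W\!\cdot\! G),
\end{equation*}
where $W\!\cdot\! F := \langle d_z F, W\rangle$ denotes the derivative in the direction of $W$, regarded as a derivation on $\widehat Z$ acting only in the $z$-coordinates. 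The crucial structural observation is that the two derivations $\partial_e$ and $W\!\cdot\!$ commute: because $W(z)$ does not depend on $e$, we have $\partial_e(W\!\cdot\! F) = W\!\cdot\!(\partial_e F)$ for every smooth $F$. Equivalently, $[W,\partial_e]=0$ as vector fields on $\widehat Z$.

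The remaining step is to expand the cyclic sum. Abbreviating $A_i := \partial_e F_i$ and $B_i := W\!\cdot\! F_i$, we have $\{F_i,F_j\} = A_j B_i - A_i B_j$, and applying the bracket again (using the commutativity $\partial_e B_i = W\!\cdot\! A_i$) yields
\begin{equation*}
\{\{F_1,F_2\},F_3\} = A_3\bigl((W\!\cdot\! A_2)B_1 + A_2(W^2\!\cdot\! F_1) - (W\!\cdot\! A_1)B_2 - A_1(W^2\!\cdot\! F_2)\bigr) - B_3\bigl(\partial_e H\bigr),
\end{equation*}
with $\partial_e H = (\partial_e A_2)B_1 + A_2(W\!\cdot\! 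A_1) - (\partial_e A_1)B_2 - A_1(W\!\cdot\! A_2)$. Then I would group the terms in the cyclic sum $\{\{F_1,F_2\},F_3\} + \{\{F_2,F_3\},F_1\} + \{\{F_3,F_1\},F_2\}$ by the second-order derivative type that appears: the $W^2\!\cdot\! F_k$ terms (each carrying an antisymmetric product of $A$'s), the $\partial_e^2 F_k$ terms (each carrying an antisymmetric product of $B$'s), and the mixed $W\!\cdot\! A_k = \partial_e B_k$ terms. In each of the three groups, two of the three cyclic contributions deliver the opposite sign of the third on the relevant monomial, and everything cancels.

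The main obstacle is purely one of bookkeeping: there are six families of cubic monomials to track, and one must be careful about signs when applying the antisymmetry $\{F,G\}=-\{G,F\}$ to collect terms. Conceptually, however, the entire verification rests on the single identity $[W,\partial_e]=0$; this is the reason the direct computation closes. (Indeed, the same cancellation is captured abstractly by the fact that a decomposable bivector $X\wedge Y$ satisfies the Jacobi identity whenever $[X,Y]$ lies in $\mathrm{span}(X,Y)$, here with $X=\partial_e$ and $Y=W$.)
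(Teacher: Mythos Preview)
Your proposal is correct and, at the level of the actual computation, follows the same route as the paper: both expand $\{\{F_i,F_j\},F_k\}$ explicitly using the representation $\{F,G\}=(\partial_e G)(W\!\cdot\!F)-(\partial_e F)(W\!\cdot\!G)$ and then verify that the cyclic sum collapses term by term. The paper organizes the bookkeeping via auxiliary functions $G_{ij}=\langle W,d_zF_i\rangle\,\partial_eF_j$ and $H_i=\langle W,d_zF_i\rangle$ together with a small lemma encoding the cyclic cancellation $a_2a_3b_1-a_1a_3b_2+\text{cyc.}=0$; your $A_i=\partial_eF_i$, $B_i=W\!\cdot\!F_i$ and grouping by second-derivative type ($W^2F_k$, $\partial_e^2F_k$, $W\!\cdot\!A_k=\partial_eB_k$) achieve the same thing with essentially identical effort.

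Where your proposal adds something the paper does not is the structural explanation: recognising the bracket as coming from the decomposable bivector $\partial_e\wedge W$ and noting that $[\partial_e,W]=0$ (since $W$ is independent of $e$) makes the Jacobi identity a consequence of a standard fact about Schouten brackets, so the cancellation is forced rather than fortuitous. This is a cleaner conceptual frame and would in principle let you skip the explicit verification entirely, whereas the paper's argument remains a hands-on calculation throughout.
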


The proof of the proposition uses the following auxiliary lemma.

\begin{lemma} \label{lemma:Jacobi_bracket_simple_version}
	For any three pairs $(a_1,b_1),(a_2,b_2),(a_3,b_3) \in \bR^2$, denote
	\begin{equation*}
	\left[\begin{pmatrix}
	a_1 \\ b_1
	\end{pmatrix}, \begin{pmatrix}
	a_2 \\ b_2
	\end{pmatrix}, \begin{pmatrix}
	a_3 \\ b_3
	\end{pmatrix} \right] := a_2 a_3 b_1 - a_1 a_3 b_2.
	\end{equation*}
	Then we have
	\begin{equation} \label{eqn:Jacobi_bracket_simple_version}
	\left[\begin{pmatrix}
	a_1 \\ b_1
	\end{pmatrix}, \begin{pmatrix}
	a_2 \\ b_2
	\end{pmatrix}, \begin{pmatrix}
	a_3 \\ b_3
	\end{pmatrix} \right] + 
	\left[\begin{pmatrix}
	a_3 \\ b_3
	\end{pmatrix}, \begin{pmatrix}
	a_1 \\ b_1
	\end{pmatrix}, \begin{pmatrix}
	a_2 \\ b_2
	\end{pmatrix} \right]+
	\left[\begin{pmatrix}
	a_2 \\ b_2
	\end{pmatrix}, \begin{pmatrix}
	a_3 \\ b_3
	\end{pmatrix}, \begin{pmatrix}
	a_1 \\ b_1
	\end{pmatrix} \right]
	= 0.
	\end{equation}
\end{lemma}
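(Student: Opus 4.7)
The claim is a purely algebraic identity on three pairs of real numbers, so the plan is simply to expand the cyclic sum and observe that the six resulting monomials cancel in pairs. I would write out the three brackets explicitly, grouping by which pair plays the role of the ``third slot'' in the bracket:
\begin{align*}
\left[\begin{pmatrix}a_1\\b_1\end{pmatrix},\begin{pmatrix}a_2\\b_2\end{pmatrix},\begin{pmatrix}a_3\\b_3\end{pmatrix}\right] &= a_2 a_3 b_1 - a_1 a_3 b_2,\\
\left[\begin{pmatrix}a_3\\b_3\end{pmatrix},\begin{pmatrix}a_1\\b_1\end{pmatrix},\begin{pmatrix}a_2\\b_2\end{pmatrix}\right] &= a_1 a_2 b_3 - a_3 a_2 b_1,\\
\left[\begin{pmatrix}a_2\\b_2\end{pmatrix},\begin{pmatrix}a_3\\b_3\end{pmatrix},\begin{pmatrix}a_1\\b_1\end{pmatrix}\right] &= a_3 a_1 b_2 - a_2 a_1 b_3.
\end{align*}

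Then I would pair the six monomials according to the variable $b_i$ they contain: the $b_1$-terms are $a_2a_3b_1$ and $-a_3a_2b_1$, the $b_2$-terms are $-a_1a_3b_2$ and $a_3a_1b_2$, and the $b_3$-terms are $a_1a_2b_3$ and $-a_2a_1b_3$. Each pair cancels by commutativity of multiplication in $\bR$, so the total sum is $0$, which is exactly~\eqref{eqn:Jacobi_bracket_simple_version}.

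There is no real obstacle here; the only ``work'' is careful bookkeeping of the cyclic permutation of the three slots and keeping the signs straight. The identity is designed precisely so that the symmetric roles of $a_2a_3$, $a_1a_3$ and $a_1a_2$ (each paired with the ``missing'' $b_i$) produce the needed cancellations. I would keep the proof to just a few lines of aligned arithmetic as above, followed by the one-sentence observation of pairwise cancellation. The substantive use of this lemma happens in Proposition~\ref{proposition:Jacobi_pre_GENERIC_to_GENERIC}, where this $2\times 2$ identity is lifted pointwise to handle the $\widehat L$ block acting on $T^*Z\times\bR$; here the lemma itself is essentially a finite check.
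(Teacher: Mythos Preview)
Your proof is correct and is essentially identical to the paper's: both simply expand the three brackets to obtain the six monomials $a_2 a_3 b_1 - a_1 a_3 b_2 + a_1 a_2 b_3 - a_3 a_2 b_1 + a_3 a_1 b_2 - a_2 a_1 b_3$ and observe the pairwise cancellation.
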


\begin{proof}
	By definition, the left-hand side of \eqref{eqn:Jacobi_bracket_simple_version} equals
	\begin{equation*}
	a_2 a_3 b_1 - a_1 a_3 b_2 + a_1 a_2 b_3 - a_3 a_2 b_1 + a_3 a_1 b_2 - a_2 a_1 b_3 
	\end{equation*}
	which indeed equals $0$.
\end{proof}

\begin{proof}[Proof of Proposition \ref{proposition:Jacobi_pre_GENERIC_to_GENERIC}]
	The differentials $\dd F_i$ consist of two components, one on $Z$, and one on $\bR$. We denote these components by $\dd_z F_i$ and $\dd_e F_i$. We introduce the following two sets of functions $G_{ij}, H_{i} : Z \times \bR \rightarrow \bR$ defined by
	\begin{align*}
	G_{ij}(z,e) & := \ip{W(z)}{\dd_z F_i(z,e)} \cdot \dd_e F_j(z,e), \\
	H_{i}(z,e) & := \ip{W(z)}{\dd_z F_i(z,e)}.
	\end{align*}
	First note that$\{F_i,F_j\} = G_{ij} - G_{ji}$.	Next, we calculate $\dd_z$ and $\dd_e$ of $G_{ij}$:
	\begin{align*}
	\dd_z G_{ij} & = \dd_z H_i \cdot \dd_e F_j + H_i \cdot \dd_z \dd_e F_j, \\
	\dd_e G_{ij} & = \dd_e H_i \cdot \dd_e F_j + H_i \cdot \dd_e \dd_e F_j \\
	& = \ip{W}{\dd_z \dd_e F_i} \cdot \dd_e F_j + H_i \cdot \dd_e \dd_e F_j.
	\end{align*}
	
	Using these differentials, a straightforward calculation yields
	\begin{align*}
	\{\{F_i,F_j\},F_k\} & = \left\langle{\begin{pmatrix}
		\dd_z \left(G_{ij} - G_{ji}\right) \\ \dd_e \left(G_{ij} - G_{ji}\right)
		\end{pmatrix}},{\begin{pmatrix}
		\dd_e F_k \cdot W \\ - H_k
		\end{pmatrix}}\right\rangle \\
	& = \dd_e F_j \cdot \dd_e F_k \cdot \ip{W}{\dd_z H_i} - \dd_e F_i \cdot \dd_e F_k  \cdot \ip{W}{\dd_z H_j} \\
	& \qquad + H_i \cdot \dd_e F_k  \cdot  \ip{W}{\dd_z \dd_e F_j} - H_j \cdot \dd_e F_k \cdot \ip{W}{\dd_z \dd_e F_i} \\
	& \qquad - \dd_e F_j \cdot H_k \cdot \ip{W}{\dd_z \dd_e F_i} + \dd_e F_i \cdot H_k \cdot \ip{W}{\dd_z \dd_e F_j} \\
	& \qquad - H_i \cdot H_k \cdot \dd_e \dd_e F_j + H_j \cdot H_k \cdot \dd_e \dd_e F_i \\
	& = \left[\begin{pmatrix}
	 \dd_e F_i \\ \ip{W}{\dd_z H_i}
	\end{pmatrix},\begin{pmatrix}
	\dd_e F_j \\ \ip{W}{\dd_z H_j}
	\end{pmatrix},\begin{pmatrix}
	\dd_e F_k \\ \ip{W}{\dd_z H_k}
	\end{pmatrix} \right] \\
	& \qquad + \left[\begin{pmatrix}
	H_i \\ \dd_e \dd_e F_i
	\end{pmatrix},\begin{pmatrix}
	H_j \\ \dd_e \dd_e F_j
	\end{pmatrix},\begin{pmatrix}
	H_k \\ \dd_e \dd_e F_k
	\end{pmatrix} \right] \\
	& \qquad + \left( H_i \cdot \dd_e F_k + H_k \cdot \dd_e F_i\right) \cdot  \ip{W}{\dd_z \dd_e F_j} \\
	& \qquad - \left(H_j \cdot \dd_e F_k + H_k \cdot \dd_e F_j \right) \cdot \ip{W}{\dd_z \dd_e F_i}.
	\end{align*}
	It follows by Lemma \ref{lemma:Jacobi_bracket_simple_version} that
	\begin{align*}
	& \{\{F_1,F_2\},F_3\} + \{\{F_3,F_1\},F_2\}  + \{\{F_2,F_3\},F_1\} \\
	& = \left( H_1 \cdot \dd_e F_3 + H_3 \cdot \dd_e F_1\right) \cdot  \ip{W}{\dd_z \dd_e F_2} \\
	& \qquad - \left(H_2 \cdot \dd_e F_3 + H_3 \cdot \dd_e F_2 \right) \cdot \ip{W}{\dd_z \dd_e F_1} \\
	& \qquad + \left( H_3 \cdot \dd_e F_2 + H_2 \cdot \dd_e F_3\right) \cdot  \ip{W}{\dd_z \dd_e F_1} \\
	& \qquad - \left(H_1 \cdot \dd_e F_2 + H_2 \cdot \dd_e F_1 \right) \cdot \ip{W}{\dd_z \dd_e F_3} \\
	& \qquad + \left( H_2 \cdot \dd_e F_1 + H_1 \cdot \dd_e F_2\right) \cdot  \ip{W}{\dd_z \dd_e F_3} \\
	& \qquad - \left(H_3 \cdot \dd_e F_1 + H_1 \cdot \dd_e F_3 \right) \cdot \ip{W}{\dd_z \dd_e F_2} \\
	& = 0.
	\end{align*}
\end{proof}

\printbibliography


\end{document}